%
%
\documentclass[12pt]{amsart}
\usepackage{geometry}
\usepackage{hyperref}
                \usepackage{amsthm,amsfonts,amsmath,amscd,amssymb,epsfig,verbatim,
}

\geometry{letterpaper}                   
\usepackage{graphicx}
\usepackage{epstopdf}
\DeclareGraphicsRule{.tif}{png}{.png}{`convert #1 `dirname #1`/`basename #1 .tif`.png}

\title{Flexible Weinstein manifolds}
\author{Kai~Cieliebak and Yasha~Eliashberg} 
\address{Kai Cieliebak \\ Institut f\"ur Mathematik \\
    Universit\"at Augsburg \\
 Germany}
 \address{Yasha Eliashberg \\ Department of Mathematics \\   Stanford University \\ USA}
 \thanks{The second author was partially supported by an NSF grant
 DMS-1205349.}
 \date{}  
 \setlength{\marginparwidth}{1.2in}
\let\oldmarginpar\marginpar
\renewcommand\marginpar[1]{\-\oldmarginpar[\raggedleft\footnotesize #1]%
{\raggedright\footnotesize #1}}

\parindent=0pt
\parskip=4pt
\hyphenation{ma-ni-fold ma-ni-folds sub-ma-ni-fold sub-ma-ni-folds Wein-stein}
%
%
%
\theoremstyle{plain}
\newtheorem{theorem}{Theorem}[section]
\newtheorem{thm}[theorem]{Theorem}

\newtheorem{cor}[theorem]{Corollary}
\newtheorem{proposition}[theorem]{Proposition}
\newtheorem{prop}[theorem]{Proposition}
\newtheorem{lemma}[theorem]{Lemma}

\theoremstyle{remark}
\newtheorem*{conjecture}{Conjecture}

\newtheorem{remark}[theorem]{Remark}
\newtheorem*{remark*}{Remark}

\newtheorem*{example*}{Example}


\newtheoremstyle{step}
{3pt}
{3pt}
{}
{\parindent}
{\bf}
{.}
{.5em}
{}
\theoremstyle{step}
\newtheorem*{step1}{Step 1}
\newtheorem*{step2}{Step 2}
\newtheorem*{step3}{Step 3}

\theoremstyle{definition}
\newtheorem*{definition}{Definition}
%
%

%
\newcommand{\id}{{{\mathchoice {\rm 1\mskip-4mu l} {\rm 1\mskip-4mu l}
{\rm 1\mskip-4.5mu l} {\rm 1\mskip-5mu l}}}}

\newcommand{\wt}{\widetilde}
\newcommand{\wh}{\widehat}
\newcommand{\ol}{\overline}

\newcommand{\p}{\partial}

\newcommand{\om}{\omega}

\newcommand{\eps}{\varepsilon}
\newcommand{\into}{\hookrightarrow}

\newcommand{\N}{{\mathbb{N}}}

\newcommand{\Z}{{\mathbb{Z}}}
\newcommand{\R}{{\mathbb{R}}}
\newcommand{\C}{{\mathbb{C}}}

\newcommand{\bS}{{\bf S}}

%

\newcommand{\st}{{\rm st}}

\newcommand{\Int}{{\rm Int\,}} 

\renewcommand{\min}{{\rm min}}
\renewcommand{\max}{{\rm max}}

\newcommand{\tb}{{\rm tb}}

\newcommand{\Id}{\mathrm {Id}}

\newcommand{\Diff}{\mathrm{Diff}}

\newcommand{\flex}{\mathrm{flex}}
\newcommand{\Skel}{\mathrm{Skel}}

\newcommand{\Stein}{\mathfrak{Stein}}
\newcommand{\Weinstein}{\mathfrak{Weinstein}}

\newcommand{\Smale}{\mathfrak{Smale}}
\newcommand{\Morse}{\mathfrak{Morse}}
\newcommand{\Liouville}{\mathfrak{Liouville}}

\newcommand{\EE}{\mathcal{E}}
\newcommand{\DD}{\mathcal{D}}

\newcommand{\PP}{\mathcal{P}}

\newcommand{\XX}{\mathcal{X}}

\newcommand{\UU}{\mathcal{U}}
\newcommand{\WW}{\mathcal{W}}

\def\Op{{\mathcal O}{\it p}\,}

\newcommand{\fS}{\mathfrak{S}}
\newcommand{\fM}{\mathfrak{M}}

\newcommand{\fW}{{\mathfrak W}}

\numberwithin{figure}{section}

%
%
%

\begin{document}

\begin{abstract}
This survey on flexible Weinstein manifolds is, essentially, an
extract from the book~\cite{CieEli12}.
\end{abstract}

\dedicatory{To Alan Weinstein with admiration.}

\maketitle
\section{Introduction}\label{sec:intro}

The notion of a {\it Weinstein manifold} was introduced 
in~\cite{EliGro91}, formalizing the symplectic handlebody construction
from Alan Weinstein's paper~\cite{Wei91} and the Stein handlebody
construction from~\cite{Eli90}. Since then, the notion of a Weinstein
manifold has become one of the central notions in
symplectic and contact topology. The existence question for Weinstein
structures on manifolds of dimension $>4$ was settled
in~\cite{Eli90}. The past five years have brought two major breakthroughs
on the uniqueness question: By work of McLean~\cite{McL09} and others
we now know that, on any manifold of dimension $>4$ which admits a
Weinstein structure, there exist infinitely many Weinstein structures
that are pairwise non-homotopic (but formally homotopic). On the other
hand, Murphy's $h$-principle for loose Legendrian knots~\cite{Mur11}
has led to the notion of  {\it flexible} Weinstein structures, which are
unique up to homotopy in their formal class. In this survey, which is
essentially 
an extract from the book~\cite{CieEli12}, we discuss this uniqueness
result and some of its applications. 

\subsection{Weinstein manifolds and cobordisms}
 
\begin{definition}
A {\em Weinstein structure} on an open manifold $V$ is a triple
$(\om,X,\phi)$, where 
\begin{itemize}
\item $\om$ is a symplectic form on $V$,
\item $\phi:V\to\R$ is an exhausting generalized Morse function,
\item $X$ is a complete vector field which is Liouville for $\om$ and
  gradient-like for $\phi$. 
\end{itemize} 
The quadruple $(V,\om,X,\phi)$ is then called a {\em Weinstein
  manifold}. 
\end{definition}

Let us explain all the terms in this definition. A {\em symplectic
form} is a nondegenerate closed $2$-form $\om$. A {\em Liouville
field} for $\om$ is a vector field $X$ satisfying $L_X\om=\om$; by
Cartan's formula, this is equivalent to saying that the associated
{\em Liouville form} 
$$
   \lambda := i_X\om
$$
satisfies $d\lambda=\om$. A function $\phi:V\to\R$ is called {\em
  exhausting} if it is proper (i.e., preimages of compact sets are
compact) and bounded from below. It is called {\em Morse} if all its
critical points are nondegenerate, and {\em generalized Morse} if its
critical points are either nondegenerate or {\em embryonic},
where the latter condition means that in some local coordinates
$x_1,\dots,x_m$ near the critical point $p$ the function looks like
the function $\phi_0$ in the {\em birth--death family}
$$
   \phi_t(x) = \phi_t(p) \pm tx_1 +  x_1^3 - \sum_{i=2}^kx_i^2 +
   \sum_{j=k+1}^mx_j^2. 
$$
A vector field $X$ is called {\em complete} if its flow exists for all
times. It is called {\em gradient-like} for a function $\phi$ if 
$$
   d\phi(X) \geq \delta(|X|^2+|d\phi|^2), 
$$ 
where $\delta:V\to\R_+$ is a positive function and the norms are taken
with respect to any Riemannian metric on $V$. Note that away from
critical points this just means $d\phi(X)>0$. Critical points $p$ of
$\phi$ agree with zeroes of $X$, and $p$ is nondegenerate
(resp.~embryonic) as a critical point of $\phi$ iff it is
nondegenerate (resp.~embryonic) as a zero of $X$. 
Here a zero $p$ of a vector field $X$ is called embryonic if $X$
agrees near $p$, up to higher order terms, with the gradient of a
function having $p$ as an embryonic critical point. 

It is not hard to see that any Weinstein structure $(\om,X,\phi)$ can
be perturbed to make the function $\phi$ Morse. However, in
$1$-parameter families of Weinstein structures  embryonic zeroes are
generically unavoidable. Since we wish to study such families, we
allow for embryonic zeroes in the definition of a Weinstein
structure. 
\medskip
  
We will also consider Weinstein structures on a {\em cobordism}, i.e.,
a compact manifold $W$ with boundary $\p W=\p_+W\amalg\p_-W$. The
definition of a {\em Weinstein cobordism} $(W,\om,X,\phi)$ differs
from that of a Weinstein manifold only in replacing the condition that
$\phi$ is exhausting by the requirement that $\p_\pm W$ are regular
level sets of $\phi$ with $\phi|_{\p_-W}=\min\,\phi$ and
$\phi|_{\p_+W}=\max\,\phi$, and completeness of $X$ by the condition
that $X$ points inward along $\p_-W$ and outward along $\p_+W$.  

A Weinstein cobordism with $\p_-W=\emptyset$ is called a {\em
Weinstein domain}. Thus any Weinstein manifold $(V,\omega,
X,\phi)$ can be exhausted by Weinstein domains $W_k=\{\phi\leq c_k\}$,
where $c_k\nearrow\infty$ is a sequence of regular values of the
function $\phi$. 


The Liouville form $\lambda=i_X\om$ induces contact forms
$\alpha_c:=\lambda|_{\Sigma_c}$ and contact structures
$\xi_c:=\ker(\alpha_c)$ on all regular level sets
$\Sigma_c:=\phi^{-1}(c)$ of $\phi$. In particular, the boundary
components of a Weinstein cobordism carry contact forms which make
$\p_+W$ a  symplectically convex and 
$\p_-W$ a  symplectically concave boundary (i.e., the orientation induced by the
contact form agrees with the boundary orientation on $\p_+W$ and is
opposite to it on $\p_-W$). Contact manifolds which appear as
boundaries of Weinstein domains are called {\em Weinstein fillable}. 

A Weinstein manifold $(V,\omega,X,\phi)$ is said to be of {\it finite
  type} if $\phi$ has only finitely many critical points. By attaching 
a cylindrical end $\bigl(\R_+\times\p W,d(e^r\lambda|_{\p
  W}),\frac{\p}{\p r},f(r)\bigr)$
(i.e., the positive half of the symplectization of the contact
structure on the boundary) to the boundary, any Weinstein domain
$(W,\om,X,\phi)$ can be completed to a finite type Weinstein manifold,
called its {\em completion}. Conversely, any finite type Weinstein
manifold can be obtained by attaching a cylindrical end to a Weinstein
domain.   
\medskip

Here are some basic examples of Weinstein manifolds. 

(1) $\C^n$ with complex coordinates $x_j+iy_j$ carries the canonical
Weinstein structure 
$$
   \Bigl(\sum_jdx_j\wedge dy_j,\frac{1}{2}\sum_j(x_j\frac{\p}{\p x_j}
   + y_j\frac{\p}{\p y_j}),\sum_j(x_j^2+y_j^2)\Bigr).
$$ 
(2) The cotangent bundle $T^*Q$ of a closed manifold $Q$ carries a
canonical Weinstein structure which in canonical local coordinates 
$(q_j,p_j)$ is given by 
$$
   \Bigl(\sum_jdp_j\wedge dq_j,\sum_jp_j\frac{\p}{\p
     p_j},\sum_jp_j^2\Bigr).
$$ 
(As it stands, this is not yet a Weinstein structure
because $\sum_jp_j^2$ is not a generalized Morse function, but it
can be easily perturbed to make the function Morse.) 

(3) The product of two Weinstein manifolds $(V_1,\omega_1,X_1,\phi_1)$
and $(V_2,\omega_2,X_2,\phi_2)$ has a canonical Weinstein structure
$(V_1\times V_2,\omega_1\oplus\omega_2, X_1\oplus
X_2,\phi_1\oplus\phi_2)$. The product $V\times\C$ with its canonical
Weinstein structure is called the {\em stabilization} of the Weinstein
manifold $(V,\omega,X,\phi)$. 
\medskip

In a Weinstein manifold $(V,\om,X,\phi)$, there is an intriguing
interplay between Morse theoretic properties of $\phi$ and 
symplectic geometry: the stable manifold $W_p^-$ (with respect to the
vector field $X$) of a critical point $p$ is {\em isotropic} in the
symplectic sense (i.e., $\om|_{W_p^-}=0$), and its intersection with
every regular level set $\phi^{-1}(c)$ is {\em isotropic} in the
contact sense (i.e., it is tangent to $\xi_c$). In particular, the
Morse indices of critical points of $\phi$ are $\leq\frac12\dim V$.

\subsection{Stein -- Weinstein -- Morse}

Weinstein structures are related to several other interesting
structures as shown in the following diagram:
\begin{equation*}
\begin{CD}
   \Stein @>\fW>> \Weinstein @>\fM>> \Morse \\  
   && @VVV \\
   && \Liouville.
\end{CD}
\end{equation*}
Here $\Weinstein$ denotes the space of Weinstein structures and
$\Morse$ the space of generalized Morse functions on a fixed manifold
$V$ or a cobordism $W$. As before, we require the function $\phi$ to
be exhausting in the manifold case, and to have $\p_\pm W$ as regular
level sets with $\phi|_{\p_-W}=\min\,\phi$ and
$\phi|_{\p_+W}=\max\,\phi$ in the cobordism
case. The map $\fM:\Weinstein \to \Morse$ is the obvious one
$(\om,X,\phi)\to\phi$. 

The space $\Liouville$ of {\em Liouville structures} consists of
pairs $(\om,X)$ of a symplectic form $\om$ and a vector
field $X$ (the {\em Liouville field}) satisfying $L_X\om=\om$.  
Moreover, in the cobordism case we require that the Liouville field
$X$ points inward along $\p_-W$ and outward along
$\p_+W$, and in the manifold case we require that $X$ is complete and
there exists an exhaustion $V_1\subset V_2\subset\cdots$ of
$V=\cup_kV_k$ by compact sets with smooth boundary $\p V_k$ along
which $X$ points outward. The map $\Weinstein \to \Liouville$ sends
$(\om,X,\phi)$ to $(\om,X)$. Note that to each Liouville structure
$(\om,X)$ we can associate the {\em Liouville form} $\lambda:=i_X\om$,
and $(\om,X)$ can be recovered from $\lambda$ by the formulas
$\om=d\lambda$ and $i_Xd\lambda=\lambda$.  

The space $\Stein$ of {\em Stein structures} consists of pairs
$(J,\phi)$ of an integrable complex structure $J$ and a 
generalized Morse function $\phi$ (exhausting resp.~constant on the
boundary components) such that $-dd^\C\phi(v,Jv)>0$ for all $0\neq
v\in TV$, where $d^\C\phi:=d\phi\circ J$. 
If $(J,\phi)$ is a Stein structure, then $\om_\phi:=-dd^\C\phi$
is a symplectic form compatible with $J$. Moreover, the Liouville
field $X_\phi$ defined by
$$
   i_{X_\phi}\om_\phi = -d^\C\phi
$$ 
is the gradient of $\phi$ with respect to the Riemannian metric
$g_\phi:=\om_\phi(\cdot,J\cdot)$. In the manifold
case, completeness of $X_\phi$ can be arranged by replacing $\phi$ by
$f\circ\phi$ for a diffeomorphism $f:\R\to\R$ with $f''\geq 0$ and
$\lim_{x\to\infty}f'(x)=\infty$; we will suppress the function $f$
from the notation. So we have a canonical map
$$
   \fW: \Stein \to \Weinstein, \qquad
   (J,\phi)\mapsto(\om_\phi,X_\phi,\phi).
$$

It is interesting to compare the homotopy types of these spaces. For
simplicity, let us consider the case of a compact domain $W$ and equip
all spaces with the $C^\infty$ topology. The results which we discuss
below remain 
true in the manifold case, but one needs to define the topology more
carefully; see Section~\ref{ss:W-homotopies} below. 
Since all the spaces have the homotopy types of CW complexes, any weak
homotopy equivalence between them is a homotopy equivalence. 

The spaces $\Liouville$ and $\Weinstein$ are very
different: there exist many examples of Liouville domains that admit
no Weinstein structure, and of contact manifolds that bound a
Liouville domain but no Weinstein domain. The first such example was
constructed by McDuff~\cite{McD91}: the manifold $[0,1]\times\Sigma$,
where $\Sigma$ is the unit cotangent bundle of a closed oriented
surface of genus $>1$, carries a Liouville structure, but its boundary
is disconnected and hence cannot bound a Weinstein domain. Many more
such examples are discussed in~\cite{Gei94}. 

By contrast, the spaces of Stein and Weinstein
structures turn out to be closely related. One of the main results of
the book~\cite{CieEli12} is 

\begin{thm}
The map $\fW:\Stein\to\Weinstein$ induces an
isomorphism on $\pi_0$ and a surjection on $\pi_1$. 
\end{thm}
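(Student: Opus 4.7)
My proposal is to factor $\fW$ as a composition $\Stein\xrightarrow{\iota}\Almost\xrightarrow{\pi}\Weinstein$ through an intermediate space $\Almost$ of \emph{almost Stein structures}: pairs $(J,\phi)$ consisting of an almost (not necessarily integrable) complex structure $J$ on $W$ and a generalized Morse function $\phi$ which is $J$-plurisubharmonic, so that $\omega_\phi:=-dd^{\C}\phi$ tames $J$ (with the boundary conditions appropriate to the cobordism case). The map $\iota$ forgets integrability of $J$, while $\pi$ sends $(J,\phi)$ to the Weinstein structure $(\omega_\phi,X_\phi,\phi)$ built from the formulas in the definition of $\fW$.

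The first step is to show that $\pi$ is a weak homotopy equivalence. Its fiber over a Weinstein structure $(\omega,X,\phi)$ consists, up to deforming $(\omega,X)$ within its Weinstein homotopy class, of $\omega$-compatible almost complex structures with a prescribed relation to $\phi$. Non-emptiness of this fiber rests on the subcriticality bound $\ind(p)\leq n$ at every critical point of $\phi$, which is precisely the Morse-theoretic condition under which a compatible plurisubharmonic exhausting function can be arranged; contractibility follows from the standard convexity of the space of $\omega$-compatible almost complex structures. Granted this, it suffices to analyze the integrability map $\iota$.

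The second and main step is to show that $\iota$ induces an isomorphism on $\pi_0$ and a surjection on $\pi_1$. This is a parametric $h$-principle for integrability of $J$, refining Eliashberg's original existence theorem for Stein structures from~\cite{Eli90}. Given $(J,\phi)\in\Almost$, I would use the Weinstein handle decomposition provided by the gradient-like flow of $X_\phi$ and straighten $J$ handle by handle: near each stable manifold $W_p^-$ one matches the standard integrable model on $\C^n$ along an isotropic $\ind(p)$-disk, and between handles one interpolates through integrable structures by a parametric Gray stability argument. Running the same construction with a one-parameter family of almost Stein structures (with fixed endpoints, or based over $S^1$) yields the parametric statements needed for $\pi_0$ injectivity and $\pi_1$ surjectivity, respectively.

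The main obstacle will be the parametric construction at moments when the one-parameter family of functions passes through an embryonic critical point: the handle decomposition jumps by a birth--death, so the straightening near the new pair of critical points must be produced from scratch. This is precisely where the explicit birth--death normal form $\phi_t(x)=\phi_t(p)\pm tx_1+x_1^3-\sum_{i=2}^kx_i^2+\sum_{j=k+1}^mx_j^2$ is essential, since it admits a canonical one-parameter family of Stein local models across which the parametric straightening can be carried; this is exactly why embryonic zeroes are built into the definition of a Weinstein structure. The fact that one obtains only a $\pi_1$ surjection, rather than a bijection, reflects the appearance at the $2$-disk level of further obstructions to integrating a family of almost complex structures that the handle-by-handle procedure does not resolve.
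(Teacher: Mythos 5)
The paper itself does not prove this theorem; it is quoted from~\cite{CieEli12}, and the book's argument proceeds by directly establishing a Stein existence theorem and its one-parametric version (handle by handle, using Eliashberg's $h$-principle for $J$-convex functions together with Moser-type arguments on the Weinstein side), rather than through the factorization you propose. Your high-level scheme -- separate the ``integrability'' step from the ``Weinstein compatibility'' step -- is in the right spirit, since the Eliashberg $h$-principle really is the engine behind the theorem. But the proposal has two genuine gaps.

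First, the claim that $\pi\colon\Almost\to\Weinstein$ is a weak homotopy equivalence is not established, and as stated it is doubtful. The fiber of $\pi$ over $(\om,X,\phi)$ is not ``the space of $\om$-compatible almost complex structures with a prescribed relation to $\phi$'': it is the set of $J$ with $-dd^\C\phi=\om$ and $X_\phi=X$ \emph{exactly}. The condition $-dd^\C\phi=\om$ forces $-d^\C\phi-\lambda$ to be closed, a pointwise affine constraint on $J$ coupled with the open compatibility/taming condition, and it is not clear that this intersection is nonempty, let alone contractible, or that $\pi$ has the Serre property. Your hedge ``up to deforming $(\om,X)$ within its Weinstein homotopy class'' concedes this, but that deformation is precisely the hard content: it is here that the Moser-type arguments and the handle-by-handle Weinstein homotopy machinery (the analogues of Lemmas~\ref{lm:W-interpolation}--\ref{lm:W-def-elem} of this paper) do their work. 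Sweeping it into the word ``fiber'' does not make the fibration claim true.

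Second, the mechanism you propose for interpolating integrable structures between handles -- ``a parametric Gray stability argument'' -- does not apply. Gray stability is a theorem about families of contact structures on a fixed closed manifold; it says nothing about families of integrable complex structures. The interpolation in the book is done by quite different means (approximation of $J$-convex functions, quasi-conformal deformations, and Moser on the Kähler forms). Relatedly, producing a ``canonical one-parameter family of Stein local models'' across a birth--death is not automatic from the normal form for $\phi_t$; constructing compatible local Stein models through the embryonic moment and matching them to the ambient structure is itself a nontrivial step, not a formality that the definition of Weinstein structures hands you for free.
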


It lends evidence to the conjecture that $\fW:\Stein\to\Weinstein$ is
a homotopy equivalence.  

The relation between the spaces $\Morse$ and
$\Weinstein$ is the subject of this article. Note first that, since for
a Weinstein domain $(W,\om,X,\phi)$ of real dimension $2n$ all
critical points of $\phi$ have index $\leq n$, one should only
consider the subset $\Morse_n\subset\Morse$ of functions all of whose
critical points have index $\leq n$. Moreover, one should restrict to
the subset $\Weinstein_\eta^\flex\subset\Weinstein$ of 
Weinstein structures $(\om,X,\phi)$ with $\om$ in a fixed given
homotopy class $\eta$ of nondegenerate $2$-forms which are {\em
  flexible} in the sense of Section~\ref{sec:flex} below. The
following sections are devoted to the proof of

\begin{thm}[\cite{CieEli12}]\label{thm:combined}
Let $\eta$ be a nonempty homotopy class of nondegenerate $2$-forms on
a domain or manifold of dimension $2n>4$. Then: 

(a) Any Morse function $\phi\in\Morse_n$ can be lifted to a flexible
Weinstein structure $(\om,X,\phi)$ with $\om\in\eta$. 

(b) Given two flexible Weinstein structures
$(\om_0,X_0,\phi_0), (\om_1,X_1,\phi_1)\in\Weinstein_\eta^\flex$, any path
$\phi_t\in\Morse_n$, $t\in[0,1]$, connecting $\phi_0$ and $\phi_1$
can be lifted to a path of flexible Weinstein structures
$(\om_t,X_t,\phi_t)$ connecting $(\om_0,X_0,\phi_0)$ and
$(\om_1,X_1,\phi_1)$. 
\end{thm}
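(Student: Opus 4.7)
My approach is to reduce the theorem to the existence and parametric uniqueness of Weinstein handlebody decompositions, with the Gromov--Lees h-principle for isotropic embeddings in subcritical codimension and Murphy's h-principle for loose Legendrian embeddings as the main analytic input. The preliminary step is to set up, as done in the book~\cite{CieEli12}, the correspondence between a Weinstein structure $(\omega,X,\phi)$ on a cobordism and its Weinstein handle decomposition: each critical point of $\phi$ of index $k\leq n$ determines (and is determined by) the attachment of a standard Weinstein $k$-handle along an attaching sphere $S^{k-1}\subset\partial_+W_k$ that is isotropic for $k<n$ and Legendrian for $k=n$, the data being the isotopy class of the sphere together with a conformal symplectic normal framing. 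Weinstein homotopies then correspond to isotopies, handle slides, and birth--death moves of this handlebody, and \emph{flexibility} means that all index-$n$ attaching spheres are placed inside prescribed loose charts.

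For part (a), I proceed by induction on the critical values $c_1<\cdots<c_N$ of $\phi$. Assuming a flexible Weinstein structure with symplectic form in the class $\eta$ has been built on $W_k=\{\phi\le c_k+\varepsilon\}$, the almost symplectic structure coming from $\eta$, together with the descending manifold of the next critical point $p$ with respect to any gradient-like field for $\phi$, provides a \emph{formal} isotropic (or formal Legendrian, if $\ind p=n$) embedding of the attaching sphere of $p$ into the contact boundary $\partial_+W_k$. Applying Gromov--Lees in the subcritical range and Murphy's theorem in the critical range realizes this formal embedding by a genuine isotropic or loose Legendrian embedding homotopic to it; attaching the standard Weinstein $k$-handle along the resulting sphere extends the Weinstein structure to $W_{k+1}$. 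Handle attachment is a symplectic thickening of a topological $k$-handle, so the class $\eta$ is preserved, and flexibility is maintained by construction. Embryonic critical points are treated by the same procedure applied to a one-parameter birth--death model.

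Part (b) is the parametric analogue and is where the core work lies. By Cerf/Hatcher--Wagoner theory for generalized Morse functions, the path $\phi_t$ decomposes into finitely many elementary moves of three types: (i) isotopies that leave the combinatorics of the handle decomposition unchanged; (ii) handle slides in which two critical values cross; (iii) births or deaths of canceling pairs of critical points of indices $(k,k+1)$ with $k+1\le n$. Types (i) and (ii) are lifted by the one-parameter versions of the Gromov--Lees and Murphy h-principles applied to paths of formal attaching data, combined with a parametric Weinstein handle-attachment lemma. Type (iii) uses the flexible cancellation theorem of the book: when the top handle of a canceling pair is attached along a loose Legendrian, the pair may be created or cancelled through a Weinstein homotopy, because looseness provides room inside a standard chart to Hamiltonian-isotope the attaching sphere onto the belt sphere of the subcritical handle. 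Concatenating the resulting lifts yields the desired path of flexible Weinstein structures.

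The main obstacle, and what dictates the hypothesis $2n>4$, is the parametric use of Murphy's theorem: it requires the contact manifolds appearing along the path to have dimension at least $5$, and it requires a coherent family of loose charts so that formal Legendrian isotopies can be realized by genuine loose Legendrian isotopies. A secondary technical issue is controlling the Liouville field through the embryonic zero that appears in a birth--death; this is precisely why the definition of $\Morse_n$ admits generalized Morse functions, and it is handled by an explicit Weinstein birth--death model compatible with flexibility.
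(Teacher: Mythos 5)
Your proposal is correct and follows essentially the same strategy as the paper: decompose into elementary Smale cobordisms (respectively, an admissible partition of the Smale homotopy) and apply Gromov's subcritical isotropic $h$-principle and Murphy's loose Legendrian $h$-principle to the attaching data, treating births and deaths by the Weinstein creation and cancellation propositions. The paper presents part (a) as an in-place deformation of the given nondegenerate form $\eta$ (which is why Gromov's totally real embedding $h$-principle also enters, via Theorem~\ref{thm:real-handle}, to put the whole stable disc rather than just its boundary into good position) instead of gluing explicit handle models, and it invokes Hatcher--Wagoner only in Corollary~\ref{cor:pseudo} to remove the hypothesis that the path $\phi_t$ exists rather than to decompose a given path, but these are variations in presentation, not in substance.
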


In other words, the map $\fM:\Weinstein_\eta^\flex\to\Morse_n$ has the
following properties: 
\begin{itemize}
\item $\fM$ is surjective;
\item the fibers of $\fM$ are path connected;
\item $\fM$ has the path lifting property. 
\end{itemize}

This motivates the following

\begin{conjecture}
On a domain or manifold of dimension $2n>4$, the map
$\fM:\Weinstein_\eta^\flex\to\Morse_n$ is a Serre fibration with
contractible fibers. 
\end{conjecture}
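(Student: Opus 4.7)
The plan is to upgrade Theorem \ref{thm:combined} to a fully parametric statement over compact CW pairs $(K,L)$, from which both the Serre fibration property and the contractibility of fibers follow. Concretely, I would aim to prove the following extension principle: given a continuous family $\{\phi_z\in\Morse_n\}_{z\in K}$ together with flexible Weinstein lifts $\{(\om_z,X_z,\phi_z)\in\Weinstein_\eta^\flex\}_{z\in L}$ defined over a subcomplex $L\subset K$, the lifts extend to a family of flexible Weinstein structures over all of $K$, still in the class $\eta$. Applied to pairs of the form $(D^n\times I,\ D^n\times\{0\}\cup\partial D^n\times I)$, this yields the homotopy lifting property characterizing a Serre fibration. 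Applied with the constant family $\phi_z\equiv\phi$ and $(K,L)=(D^{k+1},S^k)$, it gives $\pi_k(\fM^{-1}(\phi))=0$ for every $k\geq 0$ and hence contractibility of the fibers.

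The first step is a reduction to a combinatorial skeleton of elementary moves. By parametric Cerf theory, in the form of Cerf's original result for $\dim K=1$ and Hatcher--Wagoner--Igusa in higher dimensions, a $C^\infty$-small perturbation rel $L$ puts $\phi_z$ in general position and stratifies $K$ into open regions on which the sublevel-set handle decomposition is locally constant, separated by walls along which elementary moves occur: critical value crossings, handle slides, and birth--death transitions, with higher-codimension strata carrying combinations of these. The lifting problem is then attacked by induction on the codimension of strata in $K$: over each stratum, one realizes on the Weinstein side the corresponding elementary Morse-theoretic family-move, extending what is already fixed on the higher-codimension boundary of the stratum.

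For subcritical handles of index $k<n$, the attaching sphere is isotropic of codimension at least $2$ in the contact sublevel boundary, so Gromov's parametric $h$-principle for isotropic embeddings applies directly in families and all subcritical moves lift automatically, with no flexibility condition needed. For critical index-$n$ handles, the essential input is the parametric version of Murphy's $h$-principle for loose Legendrian embeddings: any family of formal Legendrian isotopies whose restriction over $L$ is realized by honest loose Legendrians can be $C^0$-approximated by a family of genuine loose Legendrian isotopies rel $L$. Combined with the parametric Weinstein handle-attachment construction underlying Theorem \ref{thm:combined}, this realizes each elementary family-move by a family of Weinstein homotopies that preserves the flexible structure by construction, and the class $\eta$ of $\om_z$ is tracked automatically once the space of formal data at each handle is seen to be contractible.

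The main obstacle, already the subtle step in the $1$-parameter case of Theorem \ref{thm:combined}, is the parametric treatment of birth--death singularities. At a birth locus a new index-$n$ handle appears with an initially infinitesimal attaching sphere, and one must ensure that this sphere is loose and compatible with the ambient family flexible structure immediately after birth, continuously along the entire stratum of births in $K$. In codimension $\geq 2$ this stratum can itself carry further Cerf-type degenerations (double births, birth--handle-slide collisions, eye singularities), and one must propagate a coherent loose Legendrian chart through all of them. I expect this to require a parametric birth--death neighborhood theorem tailored to the loose Legendrian category, proved by an inductive construction on codimension together with a suitable relative loose Legendrian extension lemma; this is where the $1$-parameter techniques of Theorem \ref{thm:combined} will need genuinely new input.
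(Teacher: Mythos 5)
The statement you set out to prove is stated in the paper as a \emph{conjecture}, not a theorem, and the paper offers no proof of it. What the authors actually establish (Theorem~\ref{thm:combined}, proved via Theorems~\ref{thm:Weinstein-existence}, \ref{thm:W-hom1}, \ref{thm:W-hom2} and their manifold analogues) is the non-parametric shadow of the conjecture: $\fM\colon\Weinstein_\eta^\flex\to\Morse_n$ is surjective, has path-connected fibers, and has the path lifting property. That is, they handle only $0$- and $1$-parameter families. The paper then explicitly says this ``motivates the following conjecture'' and leaves it open. So there is no proof in the paper for you to be compared against; any honest comparison must start from the fact that you are attempting something the authors themselves did not claim.

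As for your proposal itself: it is a plausible research program, but it is not a proof, and your final paragraph effectively concedes this. The parametric birth--death step is precisely the place where the $1$-parameter argument in the paper (Proposition~\ref{prop:creation}, Proposition~\ref{prop:cancellation}, and the Type~IIb/IId cases of Lemma~\ref{lm:W-def-elem}) does not automatically generalize, and you offer a wish-list (``parametric birth--death neighborhood theorem tailored to the loose Legendrian category'', ``relative loose Legendrian extension lemma'') rather than constructions. There are further unaddressed issues. First, you invoke a fully parametric version of Murphy's $h$-principle over arbitrary compact parameter spaces; Theorem~\ref{thm:loose} as stated and used in the paper is the $1$-parametric form, and the multi-parametric version is not something you can cite without care. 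Second, the paper's own Remark after the definition of flexibility flags that it is unknown whether flexibility is preserved under Weinstein homotopies, and in particular whether flexibility of a given cobordism decomposition implies flexibility of a coarser one; a parametric argument must control flexibility coherently across strata, and you do not explain how. Third, your claim that ``the class $\eta$ of $\om_z$ is tracked automatically once the space of formal data at each handle is seen to be contractible'' is an assertion, not an argument; even contractibility of the fibers of $\fM$ over a fixed $\phi$ would require showing that the entire space of flexible Weinstein lifts of $\phi$ in class $\eta$ has vanishing higher homotopy, and nothing in your outline engages with $\pi_k$ for $k\geq 1$ beyond gesturing at higher Cerf theory. In short, you have correctly identified the shape of a program that could attack the conjecture, and you have correctly located its hard core, but what you have written is an outline with an acknowledged hole, not a proof.
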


\section{Flexible Weinstein structures}\label{sec:flex}


Roughly speaking, a Weinstein structure is ``flexible'' if all its
attaching spheres obey an $h$-principle. More precisely,
note that each Weinstein manifold or cobordism can be cut along
regular level sets of the function into Weinstein cobordisms that are
elementary in the sense that there are no trajectories of the
vector field connecting different critical points. 
An elementary $2n$-dimensional Weinstein cobordism $(W,\om,X,\phi)$,
$n>2$, is called {\em flexible} if the attaching spheres of all
index $n$ handles form in $\p_-W$ a {\it loose} Legendrian link in the
sense of Section~\ref{sec:loose} below.  
A Weinstein cobordism or manifold structure $(\om,X,\phi)$ is called
flexible if it can be decomposed into elementary flexible
cobordisms. 

A $2n$-dimensional Weinstein structure $(\omega,X,\phi)$, $n\geq 2$,
is called {\em subcritical} if all critical points of the function
$\phi$ have index $<n$. In particular, any subcritical Weinstein
structure in dimension $2n>4$ is flexible. 

The notion of flexibility can be extended to dimension $4$ as
follows. We call a $4$-dimensional Weinstein cobordism {\em flexible}
if it is either subcritical, or the contact structure on $\p_-W$ is
overtwisted (or both); see Section~\ref{sec:ot} below. In particular,
a $4$-dimensional Weinstein {\em manifold} is then flexible if and
only if it is subcritical.  
  
\begin{remark}
The property of a Weinstein structure being subcritical is 
not preserved under Weinstein homotopies because one can always create
index $n$ critical points (see Proposition~\ref{prop:creation} below).  
We do not know whether flexibility is preserved under Weinstein
homotopies. In fact, it is not even clear to us whether every
decomposition of a flexible Weinstein cobordism $W$ into elementary
cobordisms consists of flexible elementary cobordisms. Indeed, 
if $\PP_1$ and $\PP_2$ are two partitions of $W$ into elementary
cobordisms and $\PP_2$ is finer than $\PP_1$, then flexibility of
$\PP_1$ implies flexibility of $\PP_2$ (in particular the partition
for which each elementary cobordism contains only one critical value
is then flexible), but we do not know whether flexibility of
$\PP_2$ implies flexibility of $\PP_1$. 
\end{remark}

The remainder of this section is devoted to the definition of loose
Legendrian links and a discussion of the relevant $h$-principles.

\subsection{Gromov's $h$-principle for subcritical isotropic
  embeddings}\label{sec:isotropic}

Consider a contact manifold $(M,\xi=\ker\alpha)$ of dimension $2n-1$ and a
manifold $\Lambda$ of dimension $k-1\leq n-1$. A {\em monomorphism}
$F:T\Lambda\to TM$ is a fiberwise injective bundle homomorphism
covering a smooth map $f:\Lambda\to M$. It is called {\em isotropic}
if it sends each $T_x\Lambda$ to a symplectically isotropic subspace
of $\xi_{f(x)}$ (with respect to the symplectic form $d\alpha|_\xi$). 
A {\em formal isotropic embedding}
of $\Lambda$ into $(M,\xi)$ is a
pair $(f,F^s)$, where $f:\Lambda\into M$ is a smooth embedding  
and $F^s:T\Lambda\to TM$, $s\in[0,1]$, is a homotopy of monomorphisms
covering $f$ that starts at $F^0=df$ and ends at an isotropic
monomorphism $F^1:T\Lambda\to \xi$. In the case $k=n$
we also call this a {\em formal Legendrian embedding}. 

Any genuine isotropic embedding can be viewed as a formal
isotropic embedding $(f,F^s\equiv df)$. We will not distinguish
between an isotropic embedding and its canonical lift to the space of
formal isotropic embeddings.  
A homotopy of formal isotropic embeddings $(f_t,F^s_t)$, $t\in[0,1]$, 
will be called a {\em formal isotropic isotopy}. Note that the maps
$f_t$ underlying a formal isotropic isotopy form a smooth isotopy. 

In the {\em subcritical} case $k<n$, Gromov proved 
the following $h$-principle. 
 
\begin{theorem}[$h$-principle for subcritical isotropic
  embeddings~\cite{Gro86,EliMis02}]~~
\label{thm:h-isotropic-subcrit} 

Let $(M,\xi)$ be a contact manifold of dimension $2n-1$ and $\Lambda$
a manifold of dimension $k-1<n-1$. Then the inclusion of the space
of isotropic embeddings $\Lambda\into (M,\xi)$ into the space of formal
isotropic embeddings is a weak homotopy equivalence. In particular: 

(a) Given any formal isotropic embedding $(f,F^s)$ of $\Lambda$ into
$(M,\xi)$, there exists an isotropic embedding $\wt f:\Lambda\into M$
which is $C^0$-close to $f$ and formally isotropically isotopic to
$(f,F^s)$.  

(b) Let $(f_t,F_t^s)$, $t\in[0,1]$, be a formal isotropic isotopy
connecting two isotropic embeddings $f_0,f_1:\Lambda\into M$. Then 
there exists an isotropic isotopy $\wt f_t$ connecting $\wt f_0=f_0$
and $\wt f_1=f_1$ which is $C^0$-close to $f_t$ and is homotopic to
the formal isotopy $(f_t,F_t^s)$ through formal isotropic isotopies
with fixed endpoints. 
\end{theorem}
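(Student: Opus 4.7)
The plan is to deduce the full weak homotopy equivalence by running the standard parametric Gromov machine: both parts (a) and (b) are really a special case of the claim that, for any finite CW pair $(K,L)$, any map $K\to\{\text{formal isotropic embeddings}\}$ that already lands in $\{\text{isotropic embeddings}\}$ over $L$ can be deformed rel $L$ into the genuine space. Once this is set up, I would first reduce to a local model: by compactness and a triangulation of $\Lambda$, it suffices to extend an isotropic embedding from a neighborhood of the boundary of a small cube to the cube itself, working inside a Darboux chart of $M$ where $\alpha=dz-\sum_{i=1}^{n-1} y_i\,dx_i$ on $\R^{2n-1}$. The parameter $s\in[0,1]$ in the homotopy $F^s$ plays the role of an extra, harmless parameter throughout.

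In the local model, the core step is a holonomic approximation for isotropic monomorphisms. Given a smooth map $f:\Lambda\to\R^{2n-1}$ together with an isotropic monomorphism $F^1:T\Lambda\to\xi$ covering it, I would first perturb $f$ so that its first jet $df$ almost agrees with $F^1$ up to composition with a diffeotopy of $\Lambda$, using the Holonomic Approximation Theorem of Eliashberg--Mishachev; this is where having the extra formal data $F^1$ is crucial. The resulting map $\tilde f$ satisfies $\tilde f^*\alpha\approx 0$ and $d\tilde f(T\Lambda)\subset\xi$ only approximately. To upgrade it to a genuine isotropic embedding, I would project along the Reeb direction $\partial_z$: replacing the $z$-component of $\tilde f$ by $\int \tilde f^*(\sum y_i\,dx_i)$ (integrated along paths from a basepoint) produces a map whose pullback of $\alpha$ vanishes exactly. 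The integral is well defined because $F^1$ being isotropic forces the approximate closedness of the corresponding $1$-form, and subcriticality lets us absorb the small discrepancy into a $C^0$-small change of $f$ without disturbing the monomorphism $F^1$ up to homotopy.

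The subcritical hypothesis $k-1<n-1$ enters at two crucial places. First, the space of isotropic $(k-1)$-planes in $\xi$ is connected and has positive-dimensional fibers over the Grassmannian of linear subspaces tangent to $\xi$, so the formal data $F^1$ can be generically perturbed without loss; in Gromov's language, the isotropic relation is ample and microflexible in the subcritical range. Second, since the image has dimension $k-1\le n-1$ while the ambient contact manifold has dimension $2n-1\ge 2k-1$, general position makes $\tilde f$ an embedding after an arbitrarily $C^0$-small perturbation, and the $C^0$-smallness is preserved under the Reeb projection above. Parts (a) and (b) then both follow by applying the cube-by-cube argument to $\Lambda$ and $\Lambda\times[0,1]$, respectively, the relative version of (b) being identical to the absolute statement after treating $\{0,1\}\subset[0,1]$ as part of $L$.

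The main obstacle is the holonomic approximation / Reeb-correction step. The delicate point is to carry it out while keeping the underlying smooth isotopy class of $f$ fixed and while preserving the already-isotropic data on $L$; one must choose the cube decomposition fine enough, and the Reeb correction localized enough, so that the corrected map remains $C^0$-close to $f$ and glues continuously with the boundary data. This requires a careful bookkeeping of cutoff functions, essentially the standard inductive extension scheme in $h$-principle theory, and is where a complete proof expends most of the technical work.
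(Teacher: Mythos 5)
The paper does not prove Theorem~\ref{thm:h-isotropic-subcrit}; it is stated as a black box and attributed to Gromov~\cite{Gro86} and Eliashberg--Mishachev~\cite{EliMis02}, so there is no in-paper argument to compare against. That said, your sketch has a genuine gap in the central integration step. You propose to apply holonomic approximation to $(f,F^1)$, obtaining a map $\tilde f$ with $\tilde f^*\alpha\approx 0$, and then to force $\tilde f^*\alpha=0$ by replacing the $z$-component with $z(p)=\int^p \tilde f^*\bigl(\sum y_i\,dx_i\bigr)$. But this prescription is only well defined if the $1$-form $\tilde f^*\bigl(\sum y_i\,dx_i\bigr)$ is \emph{exactly} closed, and one computes $d\bigl(\tilde f^*(\sum y_i\,dx_i)\bigr)=-\tilde f^*\,d\alpha$, which vanishes precisely when the Lagrangian projection $(\tilde x,\tilde y):\Lambda\to\R^{2n-2}$ is isotropic for the symplectic form $\sum dx_i\wedge dy_i$. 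Holonomic approximation gives you $d\tilde f$ close to the isotropic plane field $F^1$, hence this $2$-form is \emph{small}, but ``small'' is not ``zero'', and a nonzero closed correction cannot in general be absorbed by a $C^0$-small change; ``subcriticality lets us absorb the small discrepancy'' is not an argument here. So as written the step does not close.

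The way this is actually handled in the references is to separate the two problems: first prove the $h$-principle for subcritical isotropic (respectively open Lagrangian) immersions into the symplectic reduction $\R^{2n-2}$ (here microflexibility and ampleness in the subcritical range, together with convex integration or the Gromov--Lees argument, do the work), and only \emph{then} lift to the contactization by the Reeb integration---which is now legitimate because the projected map is genuinely isotropic, hence $\tilde f^*(\sum y_i\,dx_i)$ is closed, and over a cube it is exact. Embeddedness and the $C^0$-control come last, from general position, which is where the inequality $k-1<n-1$ earns its keep: the self-intersections of a $(k-1)$-dimensional map into a $(2n-1)$-manifold have negative expected dimension when $2(k-1)<2n-1$, and more restrictively $2(k-1)<2n-2$ is needed so that the Lagrangian projection is already embeddable, which is exactly the subcritical range. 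Your reduction to cubes, the parametric bookkeeping with $(K,L)$ and the extra parameter $s$, and the general-position endgame are all fine and are indeed part of the standard scheme; it is only the order in which ``approximately isotropic'' is upgraded to ``exactly isotropic'' that needs to be reorganized so that the integration step is applied to data that is already closed.
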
 

Let us discuss what happens with this theorem in the critical case
$k=n$. Part (a) remains true in all higher dimensions $k=n>2$:

\begin{theorem}[Existence theorem for Legendrian embeddings for
    $n>2$~\cite{Eli90, CieEli12}\footnote{ 
The hypothesis in~\cite{CieEli12} that $\Lambda$ is simply
connected can be easily removed.}]~~
\label{thm:h-Leg-emb}

Let $(M,\xi)$ be a contact manifold of dimension $2n-1\geq 5$ and
$\Lambda$ a manifold of dimension $n-1$. Then given any formal
Legendrian embedding $(f,F^s)$ of $\Lambda$ into 
$(M,\xi)$, there exists a Legendrian embedding $\wt f:\Lambda\into M$
which is $C^0$-close to $f$ and formally Legendrian isotopic to
$(f,F^s)$.  
\end{theorem}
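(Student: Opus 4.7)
The plan is a two-step reduction: first produce a Legendrian \emph{immersion} that is $C^0$-close to $f$ and formally Legendrian isotopic to $(f,F^s)$, then remove its self-intersections by a $C^0$-small Legendrian perturbation. The key dimension count is that for any immersion $\Lambda^{n-1}\looparrowright M^{2n-1}$ the self-intersection locus in $\Lambda\times\Lambda\setminus\Delta$ has virtual dimension $2(n-1)-(2n-1)=-1$ and is hence generically empty; the work consists in arranging this genericity while staying within the Legendrian class.

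\textbf{Step 1.} I would invoke the $h$-principle for Legendrian immersions: the inclusion of the space of Legendrian immersions $\Lambda\looparrowright(M,\xi)$ into the space of pairs $(f,F)$ consisting of a smooth map $f:\Lambda\to M$ together with an isotropic bundle monomorphism $F:T\Lambda\to\xi$ covering it is a weak homotopy equivalence. This is a classical result of Gromov that follows from the holonomic approximation theorem of Eliashberg--Mishachev, using that the Legendrian condition on an immersion defines an open, $C^1$-invariant differential relation. Applied to the formal Legendrian $(f,F^s)$, forgetting for the moment that $f$ is an embedding, it produces a Legendrian immersion $g:\Lambda\looparrowright M$ that is $C^0$-close to $f$ and formally Legendrian isotopic to $(f,F^s)$.

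\textbf{Step 2.} After a preliminary generic perturbation within the space of Legendrian immersions, the self-intersections of $g$ form a locally finite set of transverse double points. Near each double point $p=g(x_1)=g(x_2)$ I pick a Darboux chart $\alpha=dz-\sum y_i\,dx_i$ and a Weinstein neighborhood of the first sheet identifying it with the zero section of $J^1(\R^{n-1})$; the second sheet then appears as the $1$-jet graph of some function $h_2$ near $0$, and adding a small nonzero constant $c$ to $h_2$ is a Legendrian deformation that translates the second sheet in the Reeb direction and breaks the intersection. A bump-function cutoff supported in a small neighborhood of $p$ extends this local perturbation to a global Legendrian isotopy eliminating $p$ without creating new crossings, provided $|c|$ is much smaller than the distance from $p$ to the remaining double-point set. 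Iterating along an exhaustion of $\Lambda$, with the $k$-th perturbation chosen less than $1/k$ times the separation of the remaining intersection set, produces a Legendrian embedding $\wt f$ that is $C^0$-close to $g$, hence to $f$.

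Concatenating the formal Legendrian isotopy from Step~1 with the genuine Legendrian isotopy from Step~2 furnishes the required formal Legendrian isotopy from $\wt f$ to $(f,F^s)$. The main technical obstacle is Step~2: the cutoff of the local Reeb translation must preserve the Legendrian condition (which is automatic in the $J^1$ model, since adding a constant to a generating function is always a Legendrian perturbation) and must not introduce new double points, which forces the inductive scale of the perturbations to be chosen finer than the scale of the next intersection to be resolved. The hypothesis $2n-1\geq 5$ enters both in Step~1, to run the Legendrian immersion $h$-principle in a sufficiently flexible codimension, and in the transversality argument of Step~2.
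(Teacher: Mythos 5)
Your two-step plan — Legendrian immersion $h$-principle followed by double-point removal — has a genuine gap. The Legendrian immersion $h$-principle produces a Legendrian immersion $g$ and a homotopy of formal Legendrian \emph{immersions} from $(f,F^1)$ to $(g,dg)$, whose underlying maps $g_t$ are generally not embeddings. After removing the double points of $g$, what you hold is a Legendrian \emph{regular homotopy} from $f$ to $\widetilde{f}$, not a smooth isotopy; since a formal Legendrian isotopy is by definition a family whose underlying maps are embeddings, your concatenation does not deliver the conclusion. The obstruction to turning that regular homotopy into an isotopy with fixed endpoints is its self-intersection index $I$ (valued in $\Z$ for $n$ even, $\Z_2$ for $n$ odd), and the immersion $h$-principle offers no control over $I$: a $C^0$-close Legendrian immersion can be the endpoint of a regular homotopy with arbitrary $I$. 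By Whitney's theorem (as recalled just before Proposition~\ref{prop:stab}), if $I\neq 0$ the regular homotopy cannot be deformed to an isotopy at all, and $\widetilde f$ need not be formally Legendrian isotopic to $(f,F^s)$.

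The missing ingredient is exactly the stabilization construction of Section~2.4. Proposition~\ref{prop:stab}(c) lets you splice into the regular homotopy a Legendrian regular homotopy with prescribed self-intersection index $(-1)^{(n-1)(n-2)/2}\chi(N)$, and for $n>2$ domains $N\subset\R^{n-1}$ with arbitrary $\chi(N)\in\Z$ exist. Inserting such a stabilization kills the total index, Whitney's theorem then upgrades the regular homotopy to a smooth isotopy from $f$ to the resulting Legendrian embedding, and the formal Legendrian data is carried along to produce the claimed formal Legendrian isotopy. This is where $n>2$ genuinely enters, not merely in a transversality count. A smaller issue in your Step~2: when the double point is transverse, the second sheet is \emph{not} a $1$-jet graph over the first (a graph $j^1h_2$ through the double point is tangent to the zero section there, i.e.\ the non-transverse case), so the correct local model is the $J^1$ neighborhood of the second sheet itself, with the perturbation $j^1(c\chi)$ performed there; and the relevant dimension count for self-intersections is the Legendrian one (a codimension-$n$ condition — one for the $z$-value and $n-1$ for the slopes — in an $(n-1)$-dimensional domain), which happens to coincide with the smooth count $2(n-1)-(2n-1)=-1$ you quoted but is not the same computation.
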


Part (b) of Theorem~\ref{thm:h-isotropic-subcrit} does not carry over
to the critical case $k=n$: For any $n\geq 2$, there are many examples
of pairs of Legendrian knots in $(\R^{2n-1},\xi_\st)$ which are
formally Legendrian isotopic but not Legendrian isotopic; see
e.g.~\cite{Che02,EES05}.

\subsection{Legendrian knots in overtwisted contact manifolds}
\label{sec:ot}

Finally, let us consider Theorem~\ref{thm:h-isotropic-subcrit} in the
case $k=n=2$, i.e., for Legendrian knots (or links) in contact
$3$-manifolds. 
Recall that in dimension $3$ there is a dichotomy between tight
and overtwisted contact structures, which was introduced in
\cite{Eli89}. A contact structure $\xi$ on a $3$-dimensional manifold
$M$ is called {\em overtwisted} if there exists an embedded disc
$D\subset M$ which is tangent to $\xi$ along its boundary $\p D$. 
Equivalently, one can require the existence of an embedded disc with
Legendrian boundary $\p D$ which is transverse to $\xi$ along
$\p D$. A disc with such properties is called an
{\em overtwisted disc}. 

Part (a) of Theorem~\ref{thm:h-isotropic-subcrit} becomes false 
for $k=n=2$ due to Bennequin's inequality. Let
us explain this for $\R^3$ with its standard (tight) contact structure
$\xi_\st=\ker\alpha_\st$, $\alpha_\st=dz-p\,dq$. 
To any formal Legendrian embedding $(f,F^s)$ of $S^1$ into
$(\R^3,\xi_\st)$ we can associate two integers as follows. Identifying
$\xi_\st\cong\R^2$ via the projection $\R^3\to\R^2$ onto the
$(q,p)$-plane, the fiberwise injective bundle homomorphism $F^1:TS^1\cong
S^1\times\R\to\xi_\st\cong\R^2$ gives rise to a map $S^1\to
\R^2\setminus 0$, $t\mapsto F^1(t,1)$. The winding number of this map
around $0\in\R^2$ is called the {\em rotation number} $r(f,F^1)$. On
the other hand, $(F^1,iF^1,\p_z)$ defines a trivialization of the
bundle $f^*T\R^3$, where $i$ is the standard complex structure on
$\xi_\st\cong\R^2\cong\C$. Using the homotopy $F^s$, we homotope this
to a trivialization $(e_1,e_2,e_3)$ of $f^*T\R^3$ with $e_1=\dot f$
(unique up to homotopy). The {\em Thurston--Bennequin invariant}
$\tb(f,F^s)$ is the linking number of $f$ with a push-off in direction
$e_2$. It is not hard to see that the pair of invariants $(r,\tb)$
yields a bijection between homotopy classes of formal Legendrian
embeddings covering a fixed smooth embedding $f$ and $\Z^2$. In
particular, the pair $(r,\tb)$ can take arbitrary values on formal
Legendrian embeddings, while for genuine Legendrian embeddings
$f:S^1\into(\R^3,\xi_\st)$ the values of $(r,\tb)$ are constrained by
{\em Bennequin's inequality} (\cite{Ben83})  
$$
   \tb(f) + |r(f)| \leq -\chi(\Sigma),
$$
where $\Sigma$ is a Seifert surface for $f$. 

Bennequin's inequality, and thus the failure of part (a), carry over
to all tight contact $3$-manifolds. On the other hand, Bennequin's
inequality fails, and except for the $C^0$-closeness
Theorem~\ref{thm:h-isotropic-subcrit} remains true, on overtwisted
contact $3$-manifolds: 

\begin{thm}[\cite{Dym01,EliFra09}]
\label{thm:loose-3}
Let $(M,\xi)$ be a closed connected overtwisted contact $3$-manifold,
and $D\subset M$ an overtwisted disc. 

(a) Any formal Legendrian knot $(f,F^s)$ in $M$ 
is formally Legendrian isotopic 
to a Legendrian knot $\wt f:S^1\into M\setminus D$. 

(b) Let $(f_t,F^s_t)$, $s,t\in[0,1]$, be a formal Legendrian
isotopy in $M$ connecting two Legendrian knots $f_0,f_1:S^1\into
M\setminus D$. Then there exists a Legendrian isotopy $\wt
f_t:S^1\into M\setminus D$ connecting $\wt f_0=f_0$ 
and $\wt f_1=f_1$ which is homotopic to $(f_t,F^s_t)$ through formal
Legendrian isotopies with fixed endpoints. 
\end{thm}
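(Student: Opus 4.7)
The plan is to follow the standard h-principle strategy: first reduce the formal data to a genuine Legendrian representative via general position and Legendrian approximation, then exploit the overtwisted disc $D$ as a reservoir of flexibility to kill the remaining formal obstructions, which in dimension three are the classical invariants $(r,\tb)$ together with the Fuchs--Tabachnikov stabilization obstruction.

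\textbf{Reduction to a Legendrian in $M\setminus D$.} Because $D$ is two-dimensional and $f(S^1)$ is one-dimensional in the three-manifold $M$, a small $C^0$-perturbation makes $f$ (and parametrically the smooth isotopy underlying $f_t$) disjoint from $D$. The classical Legendrian approximation theorem---via the front projection and small ``zig-zag'' insertions---then $C^0$-approximates the smooth embedding by a Legendrian embedding $f'$ lying in $M\setminus D$. Parametrically one obtains a Legendrian isotopy except at finitely many times $t_i$ at which only a Legendrian immersion is produced; these singular moments are to be resolved later using the local flexibility near $D$.

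\textbf{Key local lemma: destabilization near $D$.} The heart of the argument is the claim that a Legendrian arc with access to $D$ behaves ``loosely'': for any target values $(r_0,\tb_0)$ compatible with the smooth isotopy class of a Legendrian $\Lambda \subset M\setminus D$, there is a Legendrian $\Lambda'$ of the same smooth type realizing those invariants and joined to $\Lambda$ by a Legendrian isotopy in any prescribed formal Legendrian class. Concretely, I would construct a Legendrian isotopy that unzips a positive/negative stabilization pair by sliding a pair of cusps through an arc transverse to the overtwisted disc. Since Bennequin's inequality forbids such a cancellation in any tight neighborhood, this is the precise step where overtwistedness enters, and it is where I expect the principal obstacle to lie.

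\textbf{Assembly of (a) and (b).} For part (a), apply the local lemma to the Legendrian $f'$ to adjust $(r(f'),\tb(f'))$ to match those prescribed by $(f,F^s)$; since $(r,\tb)$ classifies formal Legendrian embeddings over a fixed smooth knot, the resulting $\wt f$ is then formally Legendrian isotopic to $(f,F^s)$. For part (b), combine parametric Legendrian approximation with the Fuchs--Tabachnikov theorem: two formally Legendrian isotopic Legendrian knots become Legendrian isotopic after equal numbers of positive and negative stabilizations. Perform these stabilizations, eliminate the unwanted ones by Legendrian isotopies detouring through a neighborhood of $D$ (the local lemma applied parametrically), and finally correct the formal Legendrian class of the resulting isotopy by the same invariant-matching in $\pi_1$ of the space of formal Legendrian embeddings. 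Once the local destabilization lemma is in place, the remaining steps are standard h-principle bookkeeping.
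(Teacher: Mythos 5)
The statement you are proving is, in the paper, \emph{cited} from \cite{Dym01,EliFra09} rather than proved, so I am comparing your sketch to the known arguments in those references rather than to text in the paper.

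Your overall strategy is the right one — use the overtwisted disc as a reservoir of flexibility, and use Fuchs--Tabachnikov as the bridge to the formal classification — and this is essentially the route taken by Dymara. However, the heart of your argument, the ``key local lemma,'' is stated in a self-contradictory way. You ask for a Legendrian $\Lambda'$ of the same smooth type as $\Lambda$, realizing a prescribed pair $(r_0,\tb_0)$, and ``joined to $\Lambda$ by a Legendrian isotopy.'' But a Legendrian isotopy \emph{preserves} both $r$ and $\tb$; if $\Lambda$ and $\Lambda'$ have different classical invariants they cannot be Legendrian isotopic, in any contact manifold, overtwisted or not. What the argument actually needs is two separate statements: (i) a \emph{realization} lemma — every admissible $(r_0,\tb_0)$ is realized by some Legendrian in $M\setminus D$ smoothly isotopic to the given knot (this can be achieved by Legendrian connect sum near $D$ with Legendrian unknots of the appropriate invariants, which exist precisely because $\partial D$ has $\tb=0$, violating Bennequin); and (ii) a \emph{uniqueness} lemma — two Legendrians in $M\setminus D$ that are smoothly isotopic and have equal $(r,\tb)$ are Legendrian isotopic. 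Conflating these into a single ``Legendrian isotopy changing the invariants'' is a genuine error, not merely a gap in exposition.

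The crux of the uniqueness lemma is indeed the ability to cancel a positive/negative stabilization pair via the overtwisted disc, which you identify correctly as the place where overtwistedness is essential and as the principal difficulty — but you then leave this entirely unproved, so the heart of the theorem remains open in your sketch. Note also that after invoking Fuchs--Tabachnikov to make the two Legendrians isotopic after sufficiently many positive \emph{and} negative stabilizations, you must cancel \emph{both} kinds; and for part (b) you must additionally control the homotopy class of the resulting Legendrian isotopy among formal Legendrian isotopies rel endpoints, which is not addressed by merely ``correcting the formal Legendrian class'' at the end. To make this a proof you would need to supply the destabilization-near-$D$ construction (via a bypass or a Legendrian Reidemeister-move argument across the overtwisted disc) and carry it out parametrically for (b).
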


Although Theorem~\ref{thm:h-isotropic-subcrit} (b) generally fails for
knots in tight contact $3$-manifolds, there are some remnants for
special classes of Legendrian knots: 
\begin{itemize}
\item any two formally Legendrian isotopic {\em unknots} in
  $(\R^3,\xi_\st)$ are Legendrian isotopic~\cite{EliFra09};
\item any two formally Legendrian isotopic knots become Legendrian
  isotopic after sufficiently many stabilizations (whose number
  depends on the knots)~\cite{FukTab97}. 
\end{itemize}
In~\cite{Mur11}, E.~Murphy discovered that the situation becomes much
cleaner for $n>2$: on any contact manifold of dimension $\geq 5$ there
exists a class of Legendrian knots, called {\it loose}, which 
satisfy both parts of Theorem~\ref{thm:h-isotropic-subcrit}. Let us
now describe this class.

\subsection{Murphy's $h$-principle for loose Legendrian
  knots}\label{sec:loose} 

In order to define loose Legendrian knots we need to describe a local
model. Throughout this section we assume $n>2$. 

Consider first a Legendrian arc $\lambda_0$ in the standard contact
space $(\R^3,dz-p_1dq_1)$ with front projection as shown in
Figure~\ref{fig:loose1}, for some $a>0$. 
\begin{figure}
\centering
\includegraphics{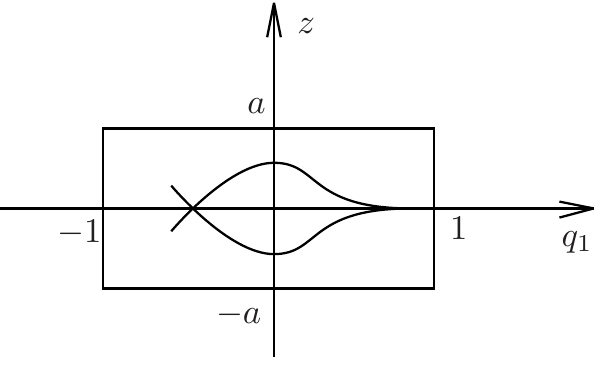}
\caption{Front of the Legendrian arc $\lambda_0$.}
\label{fig:loose1}
\end{figure}
Suppose that the slopes at the self-intersection point are $\pm 1$ and
the slope is everywhere in the interval 
$[-1,1]$, so the Legendrian arc $\lambda_0$ is contained in the box 
$$
   Q_a := \{|q_1|,|p_1|\leq 1,|z|\leq a\}
$$  
and $\p\lambda_0\subset\p Q_a$. 
Consider now the standard contact space $(\R^{2n-1},dz-\sum_{i=1}^{n-1}
p_idq_i)$, which we view as the product of the contact space $(\R^3,
dz-p_1dq_1)$ and the Liouville space $(\R^{2n-4},
-\sum\limits_{i=2}^{n-1} p_i dq_i)$. We set $q':=(q_2,\dots, q_{n-1}\}$ and
$p':=(p_2,\dots, p_{n-1})$. For $b,c>0$ we define
\begin{align*}
   P_{bc} &:= \{|q'|\leq b,\;|p'|\leq c\}\subset\R^{2n-4}, \cr
   R_{abc} &:= Q_a \times P_{bc} = \{|q_1|,|p_1|\leq  1,\; |z|\leq a,\;|q'|\leq
   b,\; |p'|\leq c\}.
\end{align*}
Let the Legendrian solid cylinder 
$\Lambda_0 \subset (\R^{2n-1},dz-\sum_{i=1}^{n-1} p_idq_i)$ be
the product of $\lambda_0\subset\R^3$ with the Lagrangian disc
$\{p'=0,|q'|\leq b\}\subset \R^{2n-4}$. Note that $\Lambda_0\subset
R_{abc}$ and $\p\Lambda_0\subset \p  R_{abc}$. 
The front of $\Lambda_0$ is obtained by translating the front of
$\lambda_0$ in the $q'$-directions; see Figure~\ref{fig:loose2}. 
\begin{figure}
\centering
\includegraphics{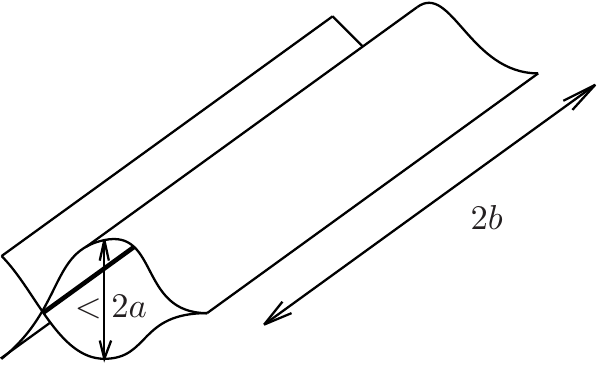}
\caption{Front of the Legendrian solid cylinder $\Lambda_0$.}
\label{fig:loose2}
\end{figure}
The pair $(R_{abc},\Lambda_0)$ is called a {\it standard loose 
  Legendrian chart} if 
$$
   a<bc.
$$ 
Given any contact manifold $(M^{2n-1},\xi)$, a Legendrian submanifold
$\Lambda\subset M$ with connected components $\Lambda_1,\dots, 
\Lambda_k$ is called 
{\em loose} 
\index{loose Legendrian submanifold}
if there exist  Darboux charts
$U_1,\dots, U_k\subset M$ such that $\Lambda_i\cap U_j=\varnothing$ for $i\neq j$ 
and each pair $(U_i,\Lambda_i\cap
U_i)$, $i=1,\dots, k$, is isomorphic to 
a standard loose Legendrian chart $(R_{abc},\Lambda_0)$. 
A Legendrian embedding $f:\Lambda\into M$ is called loose if its
image is a loose Legendrian submanifold. 
 
\begin{remark}\label{rem:loose-knots}
(1) By the contact isotopy extension theorem,
looseness is 
preserved under Legendrian isotopies within a fixed contact manifold. 
Since the model $\Lambda_0$ above can be slightly extended to a
Legendrian disc in standard $\R^{2n-1}$, 
and any two Legendrian discs are isotopic, 
it follows that {\it any Legendrian disc is loose}. 

(2) By rescaling $q'$ and $p'$ with inverse factors one can always
achieve $c=1$ in the definition of a standard loose Legendrian chart. However,
the inequality $a<bc$ is absolutely crucial in the definition. Indeed,
it follows from Gromov's isocontact embedding theorem
that around {\it any}
point in {\it any} Legendrian submanifold $\Lambda$ one can find a
Darboux neighborhood $U$ such that the pair $(U,\Lambda\cap U)$
is isomorphic to $(R_{1b1},\Lambda_0)$ for some sufficiently small
$b>0$. 

(3) Figure~\ref{fig:squeeze}
\begin{figure}
\centering
\includegraphics[height=50mm, width=80mm]{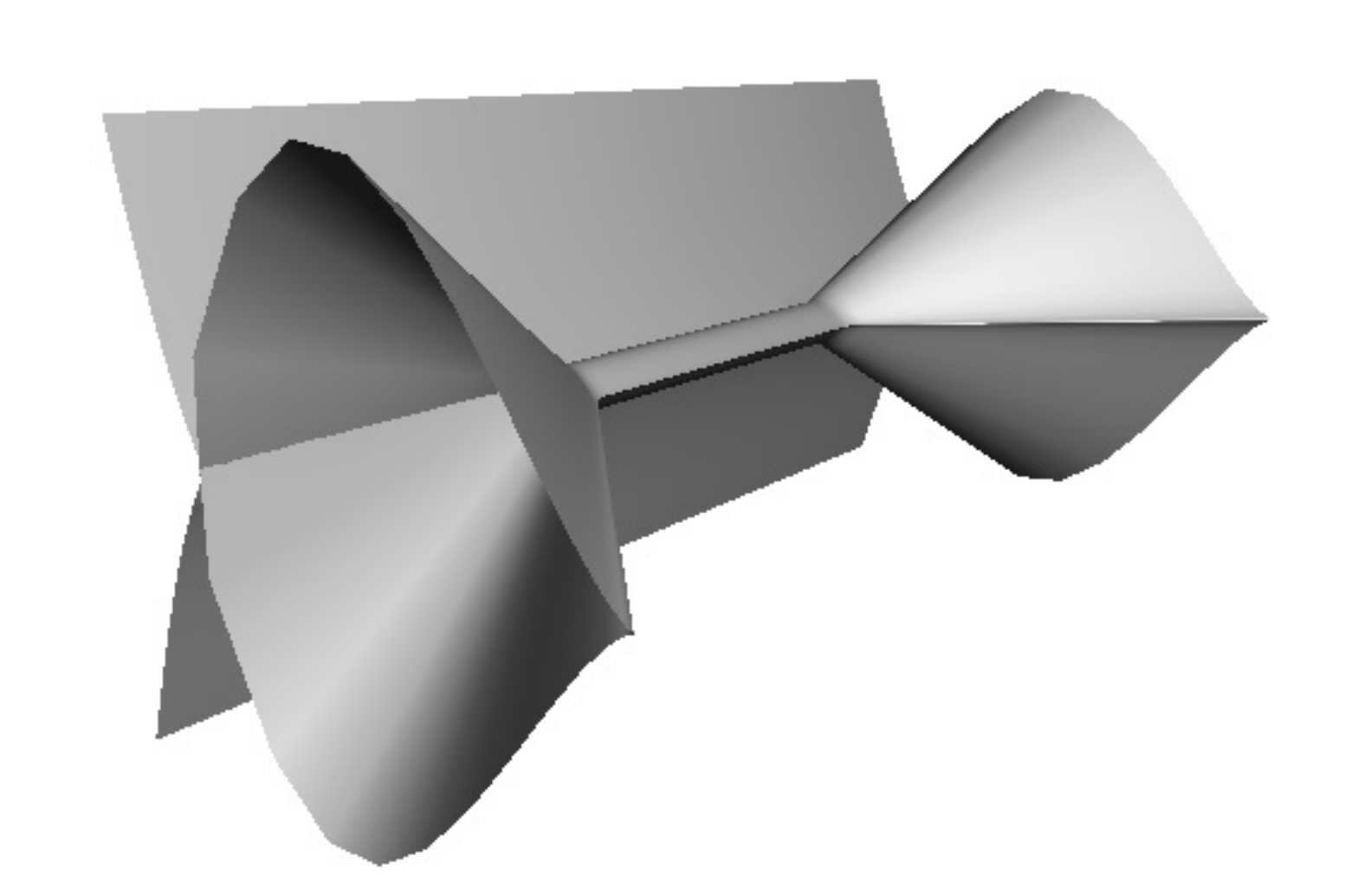}
\caption{Shrinking a standard loose Legendrian chart (picture is courtesy of E. Murphy).}
\label{fig:squeeze}
\end{figure}
taken from~\cite{Mur11} shows that the definition of looseness does
not depend on the precise choice of the standard loose Legendrian chart
$(R_{abc},\Lambda_0)$: Given a standard loose Legendrian chart with
$c=1$, the condition $a<b$ allows us to shrink its front in
the $q'$-directions, keeping it fixed near the boundary and with all
partial derivatives in $[-1,1]$ (so the deformation remains in the
Darboux chart $R_{ab1}$), to another standard loose Legendrian
chart $(R_{a'b'1},\Lambda_0')$ with $b'\geq(b-a)/2$ and arbitrarily
small $a'>0$. Moreover, we can arbitrarily prescribe the shape of the
cross section $\lambda_0'$ of $\Lambda_0'$ in this process.  
So if a Legendrian submanifold is loose for some
model $(R_{abc},\Lambda_0)$, then it is also loose for any other
model. In particular, 
fixing $b,c$ we can make $a$ arbitrarily small, and we can create
arbitrarily many disjoint standard loose Legendrian charts. 
\end{remark}

Now we can state the main result from~\cite{Mur11}. 

\begin{thm}[Murphy's $h$-principle for loose embeddings~\cite{Mur11}]~~
\label{thm:loose}

Let $(M,\xi)$ be a contact manifold of dimension $2n-1\geq 5$ and
$\Lambda$ a manifold of dimension $n-1$. Then:   

(a) Given any formal Legendrian embedding $(f,F^s)$ of $\Lambda$ into
$(M,\xi)$, there exists a loose Legendrian embedding $\wt
f:\Lambda\into M$ which is $C^0$-close to $f$ and formally Legendrian
isotopic to $(f,F^s)$.  

(b) Let $(f_t,F_t^s)$, $t\in[0,1]$, be a formal Legendrian isotopy
connecting two loose Legendrian embeddings $f_0:f_1:\Lambda\into
M$. Then there exists a Legendrian isotopy $\wt f_t$ connecting $\wt
f_0=f_0$ and  $\wt f_1=f_1$ which is $C^0$-close to $f_t$ and is
homotopic to the formal isotopy $(f_t,F_t^s)$ through formal
Legendrian isotopies with fixed endpoints. 
\end{thm}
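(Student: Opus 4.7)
The plan is to follow Murphy's strategy from~\cite{Mur11}, which treats both parts by reducing to the $h$-principle for Legendrian immersions together with an explicit local resolution of double points inside loose charts. The rough idea is that the loose chart serves as a reservoir of flexibility: any Legendrian double point that falls inside a loose chart can be resolved by an explicit local move whose formal effect is trivial.

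The first step, common to both (a) and (b), is to apply the parametric $h$-principle for Legendrian immersions of Gromov--Lees. This deforms the given formal Legendrian data $(f,F^s)$ (resp.\ the path $(f_t,F_t^s)$) into a Legendrian immersion $g:\Lambda\to M$ that is $C^0$-close to $f$ (resp.\ into a family $g_t$ of Legendrian immersions $C^0$-close to $f_t$ and equal to $f_0,f_1$ at $t=0,1$), realizing the prescribed formal data. After a transverse perturbation the immersion $g$ has only isolated double points; in the parametric case, the family $g_t$ has only isolated transverse double points appearing at isolated parameter values, together with the standard birth--death pairs of double points.

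The second step is the resolution of these double points inside loose charts. Near each double point one introduces a standard loose chart $(R_{abc},\Lambda_0)$ containing the relevant piece of $g_t$, using Remark~\ref{rem:loose-knots}(3) to make $a$ arbitrarily small relative to $b$ and $c$. Murphy's main technical lemma then provides an explicit Legendrian isotopy supported in the chart, fixing its boundary, that cancels the double point and has trivial effect on the formal Legendrian data. In the parametric case one uses the same model in a family, carefully handling the birth--death events. For part (b) one additionally arranges that the auxiliary loose charts we introduce are disjoint from the charts already exhibiting the looseness of $f_0$ and $f_1$, so that the resulting isotopy truly connects $f_0$ to $f_1$. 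Assembling these local resolutions produces a loose Legendrian embedding (resp.\ isotopy) $\wt f_t$ which is $C^0$-close to $f_t$ and formally Legendrian isotopic to $(f_t,F_t^s)$ rel endpoints.

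The main obstacle, and Murphy's key innovation, is the construction of the local resolution in the second step: an explicit Legendrian isotopy in the standard loose model $(R_{abc},\Lambda_0)$ that simultaneously cancels the double point, stays within the chart, and has trivial formal effect. The inequality $a<bc$ enters crucially here, allowing a Reidemeister-type move in the $z$-direction that would be obstructed in a generic Darboux chart. The parametric refinement of this resolution, with uniform $C^0$-control and consistent handling of birth--death events, is what distinguishes part (b) from part (a) and forms the technical heart of~\cite{Mur11}.
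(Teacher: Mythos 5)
For part (a), the paper's proof is considerably cleaner than what you propose and does not involve double-point resolution at all: it first invokes Theorem~\ref{thm:h-Leg-emb} to produce directly a genuine Legendrian embedding $\wt f$ that is $C^0$-close to $f$ and formally Legendrian isotopic to $(f,F^s)$, and then applies the stabilization construction of Proposition~\ref{prop:stab} over a domain $N$ with $\chi(N)=0$. By Proposition~\ref{prop:stab}(a) this makes the result loose, and by Proposition~\ref{prop:stab}(b) (the $\chi(N)=0$ case) it does not change the formal Legendrian isotopy class. This two-step route buys simplicity: since Theorem~\ref{thm:h-Leg-emb} already outputs an embedding, there are no double points to cancel. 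For part (b) the paper gives no proof at all and defers entirely to~\cite{Mur11}; so there is nothing in the paper to compare your sketch against.

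On your proposed argument itself: it is a reasonable high-level outline of Murphy's actual strategy, but the essential content --- the explicit Legendrian isotopy inside a loose chart that cancels a transverse double point with trivial formal effect, and above all its parametric version with uniform $C^0$-control and compatibility with birth--death events --- is invoked rather than proved; this is essentially the whole of~\cite{Mur11}, so your text is closer to a reading guide than a proof. There is also a logical gap in ``near each double point one introduces a standard loose chart $(R_{abc},\Lambda_0)$ ... using Remark~\ref{rem:loose-knots}(3)'': Remark~\ref{rem:loose-knots}(3) only lets you shrink and reshape a chart that is already loose, whereas Remark~\ref{rem:loose-knots}(2) points out that the chart $(R_{1b1},\Lambda_0)$ one finds around a generic point of a Legendrian has $b$ small and hence violates $a<bc$. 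Loose charts must first be \emph{created} --- via a stabilization such as the one in Proposition~\ref{prop:stab}, or the analogous step in Murphy's construction --- before their extra room can be used to resolve double points; simply declaring them into existence near each double point begs the question.
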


Part (a) of this theorem is a consequence of
Theorem~\ref{thm:h-Leg-emb} and the stabilization construction
which we describe next.

\subsection{Stabilization of Legendrian submanifolds}

Consider a Legendrian submanifold $\Lambda_0$ in a contact manifold
$(M,\xi)$ of dimension $2n-1$. Near a point of $\Lambda_0$, pick
Darboux coordinates $(q_1,p_1,\dots,q_{n-1},p_{n-1},z)$ in which
$\xi=\ker(dz-\sum_jp_jdq_j)$ and the front projection of $\Lambda_0$
is a standard cusp $z^2=q_1^3$. Deform the two branches of the front
to make them parallel over some open ball
$B^{n-1}\subset\R^{n-1}$. After rescaling, we may thus assume that the
front of $\Lambda_0$ has two parallel branches $\{z=0\}$ and $\{z=1\}$
over $B^{n-1}$, see Figure~\ref{fig:stab}. 
\begin{figure}
\centering
\includegraphics{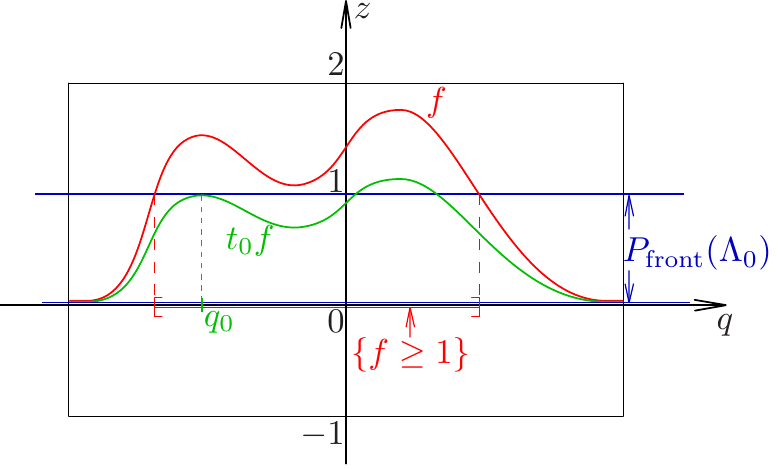}
\caption{Stabilization of a Legendrian submanifold.}
\label{fig:stab}
\end{figure}

Pick a non-negative function $\phi:B^{n-1}\to\R$ with compact support and
$1$ as a regular value, so $N:=\{\phi\geq 1\}\subset B^{n-1}$ is a
compact manifold with boundary. Replacing for each $t\in[0,1]$ the
lower branch $\{z=0\}$ by the graph $\{z=t\phi(q)\}$ of the function $t\phi$
yields the fronts of a path of Legendrian immersions $\Lambda_t\subset
M$ connecting $\Lambda_0$ to a new Legendrian submanifold $\Lambda_1$. 
Note that $\Lambda_t$ has a self-intersection for each critical point
of $t\phi$ on level $1$. 

We count the self-intersections with signs as follows. Consider the
immersion $\Gamma:=\bigcup_{t\in[0,1]}\Lambda_t\times\{t\}\subset
M\times[0,1]$. After a generic perturbation, we may assume that
$\Gamma$ has finitely many transverse self-intersections and define
its {\em self-intersection index}
$$
   I_\Gamma := \sum_p I_\Gamma(p) \in 
   \begin{cases}
      \Z & \text{ if $n$ is even,} \\
      \Z_2 & \text{ if $n$ is odd}
   \end{cases}
$$
as the sum over the indices of all self-intersection points $p$. Here
the index $I_\Gamma(p)=\pm 1$ is defined by comparing the orientations
of the two intersecting branches of $\Gamma$ to the orientation of
$M\times[0,1]$. For $n$ even this does not depend on the order of
the branches and thus gives a well-defined integer, while for $n$ odd
it is only well-defined mod $2$. By a theorem of Whitney~\cite{Whi44},
for $n>2$, the regular homotopy $\Lambda_t$ can be deformed through
regular homotopies fixed at $t=0,1$ to an isotopy iff $I_\Gamma=0$. 

\begin{proposition}[\cite{Mur11}]\label{prop:stab}
For $n>2$, the Legendrian regular homotopy $\Lambda_t$ obtained
from the stabilization construction over a nonempty domain $N\subset
B^{n-1}$ has the following properties. 

(a) $\Lambda_1$ is {\em loose}. 

(b) If $\chi(N)=0$, then $\Lambda_1$ is formally Legendrian isotopic
to $\Lambda_0$.  

(c) The regular homotopy $(\Lambda_t)_{t\in[0,1]}$ has
self-intersection index $(-1)^{(n-1)(n-2)/2}\chi(N)$.  
\end{proposition}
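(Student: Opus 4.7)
The plan is to establish (c) first by a direct sign count of self-intersections, then deduce (b) from (c) via Whitney's theorem, and finally prove (a) by exhibiting a standard loose chart. I expect (a) to be the main obstacle, since matching the front of $\Lambda_1$ against the specific scaling inequality $a<bc$ requires careful use of the flexibility afforded by Remark~2.5(3).

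For (c) I would enumerate with signs the double points of the trace $\Gamma=\bigcup_{t\in[0,1]}\Lambda_t\times\{t\}\subset M\times[0,1]$. Because the stabilization modifies only the lower branch of the front (from $z=0$ to $z=t\phi(q)$), a double point of $\Gamma$ arises precisely where the two branches of the stabilized front touch with matching tangents, i.e., $t\phi(q)=1$ and $d\phi(q)=0$. These conditions set up a bijection between double points of $\Gamma$ and critical points $p$ of $\phi$ in $\Int N$ (with no boundary contribution since $1$ is a regular value of $\phi$), each located at time $t=1/\phi(p)\in(0,1]$. At a nondegenerate $p$ of Morse index $k$ I would write $\phi$ in Morse normal form and compute the transverse intersection sign of the two sheets of $\Gamma$ in $M\times[0,1]$: the answer is $(-1)^k\varepsilon_n$, where the dimension-dependent sign $\varepsilon_n$ arises from the graded reordering of factors in the oriented identification of $T\Lambda\oplus T[0,1]$ with $TM$, and explicit bookkeeping yields $\varepsilon_n=(-1)^{(n-1)(n-2)/2}$. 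Summing, and using the Morse-theoretic identity $\chi(N)=\sum_p(-1)^{\ind(p)}$ (valid because $\nabla\phi$ points into $N$ along $\partial N$ and $\phi$ has no boundary critical points), yields the claimed formula.

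Part (b) is then immediate. If $\chi(N)=0$ then (c) gives $I_\Gamma=0$; since $n>2$, Whitney's theorem applies to $\Gamma:\Lambda\times[0,1]\looparrowright M\times[0,1]$ (embedded on $\Lambda\times\{0,1\}$) and deforms it rel boundary through regular homotopies to a smooth embedded isotopy $\tilde f_t$ from $\Lambda_0$ to $\Lambda_1$. The accompanying $2$-parameter family $g_{t,u}$ connecting $f_t=g_{t,0}$ (the Legendrian regular homotopy) to $\tilde f_t=g_{t,1}$ (the smooth isotopy) provides, via parallel transport of $dg_{t,u}$ over $\tilde f_t$, for each $t$ a homotopy of monomorphisms $F_t^s$ covering $\tilde f_t$ from $d\tilde f_t$ at $s=0$ to a Legendrian monomorphism at $s=1$. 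The pair $(\tilde f_t,F_t^s)$ is the desired formal Legendrian isotopy.

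For (a), I would localize near a smooth boundary point $q_0\in\partial N$; such a point exists since $\phi$ has compact support in $B^{n-1}$ and $N$ is nonempty and proper. In coordinates centered at $q_0$ with $q_1$ the inward normal to $\partial N$ and $q'$ tangent to $\partial N$, one has $\phi(q)=1+cq_1+O(|q|^2)$ with $c>0$. A Darboux chart $U$ around the corresponding point on $\Lambda_1$, chosen to contain both the transverse $\partial N$-crossing of the two stabilization sheets and a piece of the original flattened cusp joining them, picks out a single connected piece of $\Lambda_1$. The cross-section of this piece in the $(q_1,p_1,z)$-plane is a Legendrian arc with a single transverse front self-crossing, which after a Legendrian isotopy supported in $U$ — permitted by Remark~2.5(3), which allows arbitrary prescription of the cross-section shape — becomes the standard model $\lambda_0$; the tangential $q'$-directions together with the $p'$-directions then supply the Lagrangian disc factor. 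An appropriate affine Darboux rescaling of the coordinates (compressing in $q_1$ while expanding $p_1$) arranges $a<bc$ and produces a standard loose chart, certifying $\Lambda_1$ as loose.
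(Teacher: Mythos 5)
Your part (c) is essentially the paper's one-line proof ("make $\phi$ Morse on $N$ and apply Poincar\'e--Hopf") spelled out: the double points of $\Gamma$ occur precisely at critical points of $\phi$ with critical value $>1$, and the Morse identity $\chi(N)=\sum_p(-1)^{\mathrm{ind}(p)}$ for a compact domain with inward-pointing $\nabla\phi$ on $\partial N$ does the rest. Part (b), however, takes a genuinely different route. The paper does not invoke (c) or Whitney at all: it uses $\chi(N)=0$ directly to pick a nowhere-vanishing vector field $v$ on $N$ agreeing with $\nabla\phi$ near $\partial N$, and then writes down the smooth isotopy $f_t$ from $\Lambda_0$ to $\Lambda_1$ explicitly (interpolate the $p$-coordinate from $\nabla\phi$ to $v$, push $z$ down to $0$, interpolate $v$ to $0$). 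The payoff is that both this $f_t$ and the Legendrian regular homotopy project as the identity onto the $q$-plane over $N$, so linear interpolation between their differentials automatically stays in monomorphisms and produces $F_t^s$. Your Whitney-plus-parallel-transport route arrives at the same place, but it hides a real issue in the bundle step: the endpoint $F_t^1=P_0\circ df_t$ of your parallel transport is only a \emph{Legendrian} monomorphism if the connection you use preserves the contact distribution $\xi$ and its conformal symplectic structure; with an arbitrary (say, Levi--Civita) connection you would only get a monomorphism into $TM$, not one landing isotropically in $\xi$. Contact connections exist, so your argument can be completed, but that hypothesis must be stated. The paper's explicit interpolation is more elementary and sidesteps both Whitney and the choice of connection.

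In part (a) there is a subtlety your sketch handles only partially. The loose model's cross-section $\lambda_0$ is a \emph{single} arc with one front self-crossing, and in the stabilization the two sheets rejoin only at the cusp. In the paper's proof the bump is placed very close to the cusp edge with $\phi$ and $d\phi$ small, so that bump, crossing, and cusp all lie in one controlled Darboux neighborhood that can be directly compared with Figure~\ref{fig:loose2}. You instead localize at a boundary point $q_0\in\partial N$ in the flattened picture, where a small chart around the $\partial N$-crossing sees only two disjoint sheets; to recover the single arc $\lambda_0$ you must stretch the chart all the way out to the cusp at the far edge of $B^{n-1}$. You do note you want the chart to contain a piece of the cusp, which is the right instinct, but that forces the chart to be highly anisotropic in $q_1$ and makes the verification of $a<bc$ after your affine rescaling the substantive step rather than an afterthought. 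Your plan is not wrong, but it should make this point explicit; the paper's choice to place the bump near the cusp edge is precisely what makes the comparison immediate.
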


\begin{proof}
(a) Recall that in the stabilization construction we choose a Darboux
chart in which the front of $\Lambda_0$ consists of the two branches
$\{z=\pm q_1^{3/2}\}$ of a standard cusp, and then deform the lower branch
to the graph of a function $\phi$ which is bigger than $q_1^{3/2}$ over a
domain $N\subset\R^{n-1}$; see Figure~\ref{fig:stab-loose}.  
\begin{figure}
\centering
\includegraphics[height=60mm]{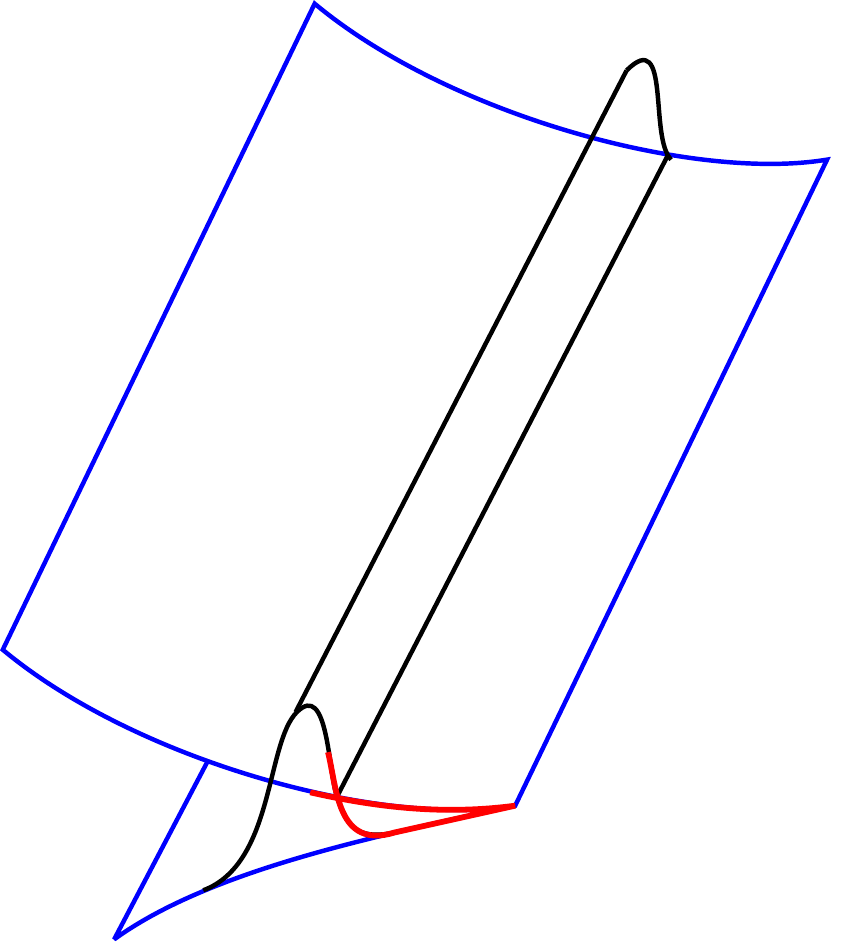}
\caption{A standard loose Legendrian chart appears in the
  stabilization procedure.}
\label{fig:stab-loose}
\end{figure}
Performing this construction sufficiently close to the cusp edge,
we can keep the values and the differential of the function
$\phi$ arbitrarily small. Then the deformation is localized within the
chosen Darboux neighborhood, and comparing
Figures~\ref{fig:stab-loose} and~\ref{fig:loose2} we see that
$\Lambda_1$ is loose.

(b) 
Consider again the stabilization construction on the two
parallel branches $\{z=0\}$ and $\{z=1\}$ of $\Lambda_0$ over the domain
$N=\{\phi\geq 1\}$. 
Since $\chi(N)=0$, there exists a nowhere vanishing vector field $v$ 
on $N$ which agrees with $\nabla\phi$ near $\p N$. Linearly
interpolating the $p$-coordinate of $\Lambda_1$ from $\nabla\phi(q)$ to
$v(q)$ (keeping $(q,z)$ fixed), then pushing the $z$-coordinate down
to $0$ (keeping $(q,p)$ fixed), and finally linearly interpolating
$v(q)$ to $0$ (keeping $(q,z)$ fixed) defines a smooth isotopy
$f_t:\Lambda_0\into M$ from $f_0=\id:\Lambda_0\to\Lambda_0$ to a
parametrization $f_1:\Lambda_0\to\Lambda_1$. On the other 
hand, the graphs of the functions $t\phi$ define a Legendrian regular
homotopy from $f_0$ to $f_1$, so their differentials give a path of
Legendrian monomorphisms $F_t$ from $F_0=df_0$ to $F_1=df_1$. Now note
that over the region $N$ all the $df_t$ and $F_t$ project as the
identity onto the $q$-plane, so linearly connecting $df_t$ and $F_t$
yields a path of monomorphisms $F^s_t$, $s\in[0,1]$, and hence the
desired formal Legendrian isotopy $(f_t,F_t^s)$ from $f_0$ to $f_1$. 

To prove (c), make the function $\phi$ Morse on $N$ and apply the
Poincar\'e--Hopf index theorem. 
\end{proof}

Since for $n>2$ there exist domains $N\subset\R^{n-1}$ of 
arbitrary Euler characteristic $\chi(N)\in\Z$, we can apply
Proposition~\ref{prop:stab} in two ways: 
Choosing $\chi(N)=0$, we can $C^0$-approximate every Legendrian
submanifold $\Lambda_0$ by a loose one which is formally Legendrian
homotopic to $\Lambda_0$. Combined with Theorem~\ref{thm:h-Leg-emb},
this proves Theorem~\ref{thm:loose}(a). 
Choosing $\chi(N)\neq 0$, we can connect each Legendrian
submanifold $\Lambda_0$ to a (loose) Legendrian submanifold
$\Lambda_1$ by a Legendrian regular homotopy $\Lambda_t$ with any
prescribed self-intersection index. This will be a crucial ingredient
in the proof of existence of Weinstein structures. 

\begin{remark}
For $n=2$ we can still perform the stabilization
construction. However, since every domain $N\subset\R$ is a union of
intervals, the self-intersection index $\chi(N)$ in
Proposition~\ref{prop:stab} is now always {\em positive} and hence
cannot be arbitrarily prescribed. Figure~\ref{fig:stab-3} 
\begin{figure}
\centering
\includegraphics{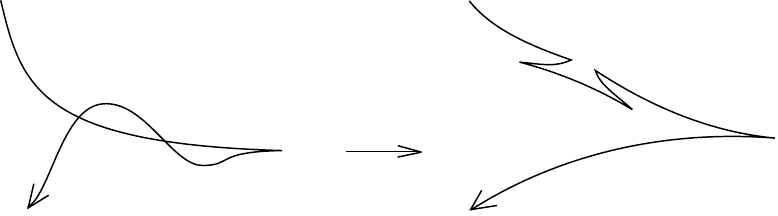}
\caption{Stabilization in dimension $3$.}
\label{fig:stab-3}
\end{figure}
shows two
front projections of the stabilization over an interval (related by
Legendrian Reidemeister moves, see e.g.~\cite{Etn05}). Thus our
stabilization introduces a downward and an upward zigzag, which
corresponds to a ``positive and a negative stabilization'' in the
usual terminology of $3$-dimensional contact topology. It leaves the
rotation number unchanged and {\em decreases} the Thurston--Bennequin
invariant by $2$, in accordance with Bennequin's inequality.
In particular, stabilization in dimension $3$ never preserves the
formal Legendrian isotopy class.
\end{remark}

\subsection{Totally real discs attached to a contact boundary}
\label{sec:attaching-discs}

The following Theorems~\ref{thm:real-handle}
and~\ref{thm:real-handle-subcrit} are combinations of the  
$h$-principles discussed in Sections~\ref{sec:isotropic}
and~\ref{sec:loose} with Gromov's $h$-principle for totally real
embeddings~\cite{Gro86}.

Let $(V, J)$ be an almost complex manifold and $W\subset V$ a 
domain with smooth boundary $\p W$.  Let $L$ be a 
compact manifold with boundary. Let $f:L\into 
V\setminus\Int W$  be an embedding with 
$f(\p L)=f(L)\cap\p W$
which is transverse to $\p W$ along $\p 
L$. We say in this case that $f$ {\em transversely} attaches $L$ to $W$
along $\p L$.  If, in addition, $Jdf(TL|_{\p L})\subset T(\p
W)$, then we say that $f$ attaches $L$ to $W$ {\em $J$-orthogonally}. 
Note that this implies that $df(\p L)$ is tangent to the {\em maximal
  $J$-invariant distribution} $\xi=T(\p W)\cap JT(\p W)$. In 
particular, if the distribution $\xi$ is a contact structure, 
then $f(\p L)$ is an isotropic
submanifold for the contact structure $\xi$. 

\begin{thm}\label{thm:real-handle}
Suppose that $(V,J)$ is an almost complex manifold of real dimension $2n$, 
and $W\subset V$ is a domain such that the distribution $\xi=T(\p
W)\cap JT(\p W)$ is contact. Suppose that an embedding  $f:D^k \into
V$, $k\leq n$, transversely attaches $D^k$ to $W$ along $\p D^k$. 
If $k=n=2$ we assume, in addition, that the induced contact structure on
$\p W$ is overtwisted.
Then there exists an isotopy $f_t:D^k \into V,$ $t\in[0,1]$, through
embeddings transversely attaching $D^k$ to $W$, such that $f_0=f$, and
$f_1$ is totally real and $J$-orthogonally attached to $W$. 
Moreover, in the case $k=n>2$ we can arrange that the Legendrian
embedding $f_1|_{\p D^k}:\p D^k\into\p W$ is loose, while   
for $k=n=2$ we can arrange that the complement $\p W\setminus f_t(\p
D^2)$ is overtwisted for all $t\in[0,1]$.  
\end{thm}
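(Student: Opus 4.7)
The plan is to reduce Theorem \ref{thm:real-handle} to three previously stated $h$-principles: the subcritical isotropic $h$-principle (Theorem \ref{thm:h-isotropic-subcrit}), Murphy's loose Legendrian $h$-principle (Theorem \ref{thm:loose}) together with its $3$-dimensional overtwisted counterpart (Theorem \ref{thm:loose-3}), and Gromov's $h$-principle for totally real embeddings. The linking observation is the following: once $f|_{\p D^k}$ has been made isotropic in $(\p W,\xi)$ and $f$ has been arranged to attach $J$-orthogonally, a short collar of $f(\p D^k)$ in $V\setminus W$ is automatically totally real, because an isotropic subspace of $\xi$ is totally real in $(\xi,J|_\xi)$ and the $J$-orthogonal transverse direction spans a line $\ell$ with $J\ell\subset T\p W$. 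Gromov's $h$-principle applied relative to this collar then finishes the argument, since the formal totally real monomorphism needed on the remaining disc is unobstructed on the contractible domain $D^k$.

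First I would deform $f|_{\p D^k}$ to an isotropic embedding with the required auxiliary property. The differential $df|_{\p D^k}$ is formally isotropic into $\xi$: indeed $\p D^k=S^{k-1}$ is stably parallelizable, and any rank $k-1\leq n-1$ monomorphism $TS^{k-1}\to\xi$ can be deformed through monomorphisms to an isotropic one. When $k<n$, Theorem \ref{thm:h-isotropic-subcrit}(a) yields a $C^0$-close isotropic embedding formally isotropic to $f|_{\p D^k}$. When $k=n>2$, Theorem \ref{thm:h-Leg-emb} first produces a $C^0$-close Legendrian embedding, and Proposition \ref{prop:stab}(a)--(b) with $\chi(N)=0$ (equivalently Theorem \ref{thm:loose}(a)) then renders it loose while preserving the formal Legendrian class. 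When $k=n=2$, I would pick an overtwisted disc $D\subset\p W$ disjoint from $f(\p D^2)$ and apply Theorem \ref{thm:loose-3}(a) to obtain a Legendrian knot disjoint from $D$. By the contact isotopy extension theorem, each of these boundary deformations extends to a compactly supported ambient isotopy of $V$ through transverse attachments, so from now on I may assume $f|_{\p D^k}$ is already isotropic and loose (respectively, disjoint from $D$).

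Next I would enforce $J$-orthogonality on a collar of $\p D^k$. The required transverse direction is uniquely characterized as the $J$-image of the Reeb line at each point $p\in\p D^k$, so a $C^0$-small reparametrization of $f$ in this collar places $df(\nu)$ into this line (with $\nu$ the outward normal to $\p D^k$), preserving transverse attachment. The restriction of $df$ to the collar is then totally real, and Gromov's $h$-principle for totally real embeddings in the relative form, applied to the complementary subdisc of $D^k$, produces the desired $f_1$. The main obstacle lies in the case $k=n=2$, where one must guarantee that the \emph{entire} family $f_t(\p D^2)$ avoids $D$, so that $\p W\setminus f_t(\p D^2)$ remains overtwisted for all $t\in[0,1]$. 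This is handled by invoking the parametric form of Theorem \ref{thm:loose-3}(b) to realize the deformation of $f|_{\p D^2}$ as a Legendrian isotopy inside $\p W\setminus D$; the subsequent ambient extension can be chosen supported in a small tubular neighborhood of $f(D^2)$ away from $D$, so that $D$ persists as an overtwisted disc in the complement of $f_t(\p D^2)$ throughout the isotopy.
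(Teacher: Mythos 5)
Your overall scheme — make the attaching sphere isotropic (loose Legendrian in the critical case), arrange $J$-orthogonality so that a collar of $\p D^k$ becomes totally real, and then apply Gromov's totally real $h$-principle relative to that collar — is the right one and matches the paper's description of this result as a combination of the cited $h$-principles. Your verification that an isotropic boundary plus $J$-orthogonal attachment forces a collar to be totally real is also correct.

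However, there is a genuine gap in the final step. You assert that ``the formal totally real monomorphism needed on the remaining disc is unobstructed on the contractible domain $D^k$,'' but this is not justified: you are working \emph{relative to the collar}, so the relevant obstruction is a class in a relative homotopy group of the pair (all monomorphisms, totally real monomorphisms) over $(D^k,\p D^k)$, and the contractibility of $D^k$ does not make this relative group vanish. In the critical case $k=n$ this group is in general nontrivial (it encodes a Whitney-type self-intersection index together with the rotation class of the Legendrian boundary), and whether it vanishes depends on \emph{which} formal Legendrian class you put on $\p D^n$. You chose to preserve the original formal class by stabilizing with $\chi(N)=0$ (Proposition~\ref{prop:stab}(b), Theorem~\ref{thm:loose}(a)), but there is no reason that this particular class kills the interior obstruction. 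The mechanism to fix this is exactly the one the paper flags as ``a crucial ingredient in the proof of existence of Weinstein structures'': the stabilization with $\chi(N)\neq 0$ (Proposition~\ref{prop:stab}(c)), which gives a Legendrian regular homotopy of prescribed self-intersection index and hence the freedom to adjust the formal Legendrian class (rotation data) of $\p D^n$ so that the formal totally real structure does extend over $D^n$. Your proposal never uses this freedom, so it does not establish the critical case $k=n>2$.

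Two smaller points. First, the claim that ``any rank $k-1\leq n-1$ monomorphism $TS^{k-1}\to\xi$ can be deformed through monomorphisms to an isotropic one'' is stated without proof; note that $df|_{\p D^k}$ a priori lands in $T\p W$, not in $\xi$, and the space of isotropic monomorphisms is only $2(n-k)$-connected, so some justification is needed (the cleanest route is to trivialize $(TV,J)$ over the contractible set $f(D^k)$ first). Second, extending the initial deformation of $f|_{\p D^k}$ (from smooth to isotropic) to an ambient isotopy of $V$ uses the ordinary isotopy extension theorem, not the contact one; the contact isotopy extension theorem only applies once you are deforming through isotropic embeddings.
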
 
 
We will  also need the following 1-parametric
version of Theorem~\ref{thm:real-handle} for totally real discs
attached along loose knots.  

\begin{thm}\label{thm:real-handle-subcrit}
Let $J_t$, $t\in[0,1]$, be a family of almost complex structures on a
$2n$-dimensional manifold $V$. Let $W\subset V$ be a domain with
smooth boundary such that the distribution $\xi_t=T(\p W)\cap J_tT(\p
W)$ is contact 
for each $t\in[0,1]$. Let $f_t:D^k\into V\setminus\Int W$,
$t\in[0,1]$, $k\leq n$, be an isotopy of embeddings transversely
attaching $D^k$ to $W$ along $\p D^k$. Suppose that for $i=0,1$ the
embedding $f_i$ is $J_i$-totally real and $J_i$-orthogonally attached
to $W$. Suppose that either $k<n$, or $k=n>2$ and the 
Legendrian embeddings $f_i|_{\p D}$, $i=0,1$ are {\em loose}.   
Then there exists a $2$-parameter family of embeddings
$f_t^s:D^k\into V\setminus\Int W$ with the following
properties:
\begin{itemize}
\item $f_t^s$ is transversely attached to $W$ along $\p D^k$ and
  $C^0$-close to $f_t$ for all $t,s\in[0,1]$;  
\item $f_t^0=f_t$ for all $t\in[0,1]$ and $f_0^s=f_0$, $f_1^s=f_1$ for
  all $s\in[0,1]$; 
\item $f_t^1$ is $J_t$-totally real and $J_t$-orthogonally attached to
$W$ along $\p D^k$ for all $t\in[0,1]$.
\end{itemize}
\end{thm}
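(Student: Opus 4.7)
The plan is to prove this as a parametric (in $t$) version of Theorem~\ref{thm:real-handle}, by composing three parametric $h$-principles. First I would reduce to a fixed contact structure on $\partial W$: by the parametric version of Gray's stability theorem applied to $\xi_t = T(\partial W)\cap J_t T(\partial W)$, there is an ambient isotopy $\Phi_t$ of $V$ supported in a neighborhood of $\partial W$ with $\Phi_0=\id$ and $\Phi_t^*\xi_t=\xi_0$. After replacing $f_t$ by $\Phi_t^{-1}\circ f_t$ and $J_t$ by the pullback almost complex structure $d\Phi_t^{-1}\circ J_t\circ d\Phi_t$, we may assume that $\xi_t\equiv\xi_0$ along $\partial W$.

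Next I would arrange an isotropic boundary for every $t$. The isotopy $f_t|_{\partial D^k}$ is already isotropic at $t=0,1$ by the $J_i$-orthogonality hypothesis, so I would extend it to a formal isotropic isotopy by interpolating the differentials pointwise in $t$ to $\xi_0$-isotropic monomorphisms covering $f_t|_{\partial D^k}$. Applying Theorem~\ref{thm:h-isotropic-subcrit}(b) in the case $k<n$, or Theorem~\ref{thm:loose}(b) in the case $k=n>2$ (using the loose hypothesis on $f_i|_{\partial D^k}$), yields a 2-parameter family $g_t^s:\partial D^k\into\partial W$ with $g_t^0=f_t|_{\partial D^k}$, $g_i^s=f_i|_{\partial D^k}$ for $i=0,1$, and $g_t^1$ isotropic (and loose in the critical case) for every $t$. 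The isotopy extension theorem then produces a family $\wt f_t^s:D^k\into V\setminus\Int W$ of transversely attached embeddings with $\wt f_t^0=f_t$ and $\wt f_i^s=f_i$, which is $C^0$-close to $f_t$.

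Next I would upgrade the transverse attachment of $\wt f_t^1$ to $J_t$-orthogonal attachment. Since $dg_t^1(T\partial D^k)\subset\xi_0$ already lies in $T(\partial W)\cap J_t T(\partial W)$, only the image of the outward normal $\nu$ to $\partial D^k$ in $D^k$ requires correction. I would deform $\wt f_t^1$ in a thin collar of $\partial D^k$ by a $t$-family of ambient diffeomorphisms fixing $g_t^1(\partial D^k)$ so that $J_t d\wt f_t^1(\nu)$ lies in $T(\partial W)$ pointwise along $\partial D^k$; this deformation is unobstructed parametrically because no correction is needed at $t=0,1$, the normal bundle of $\partial D^k$ in $D^k$ is trivial, and the fiberwise condition on the normal direction is contractible.

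Finally I would apply the parametric, relative form of Gromov's $h$-principle for totally real embeddings~\cite{Gro86} to the resulting family, keeping the boundary behavior fixed: the formal totally real monomorphism $TD^k\to TV$ covering $\wt f_t^1$ required as input exists because $D^k$ is contractible and the Grassmannian of totally real $k$-planes in complex $n$-space is connected, it matches $d\wt f_t^1$ along $\partial D^k$ by construction, and it coincides with $df_i$ at $t=0,1$. The $h$-principle then yields a homotopy to a $J_t$-totally real family $f_t^1$, and concatenating the homotopies produced in the three $h$-principle steps into a single $s$-parameter gives the required 2-parameter family $f_t^s$. The main obstacle I anticipate is this last step: carrying out the parametric, relative totally real $h$-principle in such a way that $J_t$-orthogonal attachment along $\partial D^k$ (and not merely transversality and totally realness in the interior) is preserved throughout the deformation, together with the bookkeeping to splice the three separate homotopies into a single continuous family satisfying all the prescribed endpoint and boundary conditions.
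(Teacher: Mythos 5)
The paper does not actually give a proof of Theorem~\ref{thm:real-handle-subcrit}: it only records, in the sentence preceding Theorem~\ref{thm:real-handle}, that these two theorems ``are combinations of the $h$-principles discussed in Sections~\ref{sec:isotropic} and~\ref{sec:loose} with Gromov's $h$-principle for totally real embeddings.'' Your proposal is a reasonable and essentially faithful realization of exactly that strategy: normalize the contact structure on $\p W$ by Gray stability, run the parametric isotropic/loose $h$-principle along the attaching spheres, upgrade to $J_t$-orthogonal attachment in a collar, and then invoke the parametric relative totally real $h$-principle in the interior. The Gray reduction is clean and correct (and you correctly note that loosenesses and $J$-orthogonality are carried along under the pullback).

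Two places are, however, more delicate than your write-up acknowledges and should not be waved through. First, in both the boundary step and the totally real step you must produce \emph{formal} one-parameter data interpolating the given genuine data at $t=0,1$ and satisfying a fibered constraint at $s=1$; this is a relative extension problem over $[0,1]\times D^k$ (resp.~$[0,1]\times\p D^k$), not a pointwise interpolation. The existence of such formal data is the real content of the hypothesis, and it should be checked that the relevant fibers (totally real $k$-frames in $\C^n$, isotropic $(k-1)$-frames in $\xi$) have the connectivity needed to make the extension unobstructed relative to the prescribed boundary; for a contractible $D^k$ and $k\le n$ this does work out, but a proof should say why. Second, your step ``deform $\wt f^1_t$ in a thin collar so that $J_t\,d\wt f^1_t(\nu)\in T(\p W)$'' must be done compatibly with the constraints already achieved: the collar deformation has to preserve both the isotropic boundary produced in step~2 and (once step~4 is done) totally realness near $\p D^k$, and one should say explicitly that the space of admissible corrections of $d\wt f^1_t(\nu)$ at each boundary point is a nonempty convex (hence contractible) set, which is what makes the parametric choice unobstructed. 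With these two points filled in, the argument is complete and coincides with what the paper indicates.
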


\section{Morse preliminaries}\label{ch:Morse}

In this section we gather some notions and results from Morse theory
that are needed for our main results. We omit most of the proofs and
refer the reader to the corresponding chapter of our
book~\cite{CieEli12}. Throughout this section, $V$ denotes a smooth
manifold and $W$ a cobordism, both of dimension $m$. 

\subsection{Gradient-like vector fields}\label{sec:gradient}

A smooth function $\phi:V\to\R$ is called 
{\em Lyapunov } for a vector field
for $X$, and $X$  is called
{\em gradient-like} 
for $\phi$, if
\begin{equation}\label{eq:grad-like}
   X\cdot\phi\geq\delta(|X|^2+|d\phi|^2)
\end{equation}
for a positive function $\delta:V\to\R_+$, where $|X|$ is the norm
with respect to some Riemannian metric on $V$ and $|d\phi|$ is the
dual norm.  
By the Cauchy--Schwarz inequality, condition~\eqref{eq:grad-like}
implies 
\begin{equation}
   \delta|X|\leq |d\phi|\leq\frac{1}{\delta}|X|. 
\end{equation} 
In particular, 
zeroes of $X$ coincide with critical points of $\phi$.

\begin{lemma}\label{lem:cone}
(a) If $X_0,X_1$ are gradient-like vector fields for $\phi$,
then so is $f_0X_0+f_1X_1$ for any nonnegative functions $f_0,f_1$
with $f_0+f_1>0$. 

(b) If $\phi_0,\phi_1$ are Lyapunov functions for $X$,
then so is $\lambda_0\phi_0+\lambda_1\phi_1$ for any nonnegative
constants $\lambda_0,\lambda_1$ with $\lambda_0+\lambda_1>0$. 

In particular, the following spaces are convex cones and hence
contractible:
\begin{itemize}
\item the space of Lyapunov functions for a given vector
field $X$;
\item the space of gradient-like vector fields for a given
  function $\phi$.
\end{itemize}
\end{lemma}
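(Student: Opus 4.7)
The plan is to derive both closure statements directly from the defining inequality~\eqref{eq:grad-like} by a short manipulation involving Cauchy--Schwarz, and then observe that convexity, the cone property, and contractibility follow formally.

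For part (a), I would fix witnesses $X_i\cdot\phi\geq\delta_i(|X_i|^2+|d\phi|^2)$ for $i=0,1$, set $\tilde\delta:=\min(\delta_0,\delta_1)>0$, and take the weighted sum to obtain
\[
   (f_0X_0+f_1X_1)\cdot\phi \;\geq\; \tilde\delta\bigl(f_0|X_0|^2+f_1|X_1|^2\bigr) + \tilde\delta(f_0+f_1)|d\phi|^2.
\]
The Cauchy--Schwarz bound $|f_0X_0+f_1X_1|^2 \leq (f_0+f_1)\bigl(f_0|X_0|^2+f_1|X_1|^2\bigr)$ converts the first term into a lower bound on $|f_0X_0+f_1X_1|^2$. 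Taking
\[
   \delta := \tilde\delta\cdot\min\bigl(f_0+f_1,\,(f_0+f_1)^{-1}\bigr),
\]
which is positive wherever $f_0+f_1>0$, yields the desired inequality. Part (b) is the mirror argument: the weighted sum $\lambda_0(X\cdot\phi_0)+\lambda_1(X\cdot\phi_1)$ is bounded below in terms of $\lambda_0|d\phi_0|^2+\lambda_1|d\phi_1|^2$, and Cauchy--Schwarz converts this into a bound on $|d(\lambda_0\phi_0+\lambda_1\phi_1)|^2$. Since the $\lambda_i$ are now constants, the resulting $\delta$ is a positive constant.

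For the ``In particular'' statement, the cone property reduces to positive scaling: multiplying $X$ by $c>0$ (or $\phi$ by $\lambda>0$) rescales the gradient-like inequality with a $\delta$ multiplied by $\min(c,c^{-1})$ times the original. Combined with (a) and (b), this shows that each of the two spaces is a convex cone in the relevant vector space. A nonempty convex subset of a topological vector space is contractible via the straight-line homotopy to any chosen base point, and this completes the argument.

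The ``main obstacle'' is really just bookkeeping: the combined $\delta$ in (a) is not bounded, because $(f_0+f_1)^{-1}$ blows up where $f_0+f_1$ is small and $(f_0+f_1)$ blows up where it is large. This is harmless since the definition only requires $\delta$ to be a positive function rather than a bounded one, and it is precisely why the two-sided $\min$ in the formula for $\delta$ is needed. Nothing else is delicate.
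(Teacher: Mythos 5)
Your proof is correct and takes essentially the same approach as the paper: apply linearity, use the two gradient-like inequalities, control $|X|^2$ by a quadratic inequality, and collect everything into a single positive function $\delta$. The only difference is cosmetic — the paper bounds $|f_0X_0+f_1X_1|^2 \leq 2(|f_0X_0|^2+|f_1X_1|^2)$ and sets $\delta = \min\{\tfrac{\delta_0}{2f_0},\tfrac{\delta_1}{2f_1},f_0\delta_0+f_1\delta_1\}$, whereas you use the weighted Cauchy--Schwarz bound $|f_0X_0+f_1X_1|^2 \leq (f_0+f_1)(f_0|X_0|^2+f_1|X_1|^2)$ with $\tilde\delta=\min(\delta_0,\delta_1)$ and $\delta=\tilde\delta\min(f_0+f_1,(f_0+f_1)^{-1})$; both are valid since the definition only requires $\delta$ to be positive, not bounded.
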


\begin{proof}
Consider two vector fields $X_0,X_1$ satisfying
$X_i\cdot\phi\geq\delta_i(|X_i|^2+|d\phi|^2)$ and nonnegative
functions $f_0,f_1$ with $f_0+f_1>0$. 
Then the vector field
$X=f_0X_0+f_1X_1$ satisfies~\eqref{eq:grad-like} with
$\delta:=\min\left\{\frac{\delta_0}{2f_0},
\frac{\delta_1}{2f_1},f_0\delta_0+f_1\delta_1\right\}$: 
\begin{align*}
   X\cdot\phi 
   &\geq f_0\delta_0|X_0|^2+f_1\delta_1|X_1|^2 +
   (f_0\delta_0+f_1\delta_1)|d\phi|^2 \cr
   &\geq 2\delta (|f_0X_0|^2+|f_1X_1|^2) + \delta|d\phi|^2
   \cr
    &\geq \delta(|X|^2+|d\phi|^2). 
\end{align*}
Positive combinations of functions are treated analogously. 
\end{proof}

\subsection{Morse and Smale cobordisms}\label{sec:Smale-cobordisms}

A {\em (generalized) Morse cobordism} is a pair $(W,\phi)$, where $W$ is a
cobordism and $\phi:W\to\R$ is a (generalized) Morse function 
which has $\p_\pm W$ as its regular level sets such that
$\phi|_{\p_-W}<\phi|_{\p_+W}$. 
A {\em (generalized) Smale cobordism} is a triple $(W,\phi,X)$, where $(W,\phi)$ is
a (generalized) Morse cobordism and $X$ is a gradient-like vector field for
$\phi$. Note that $X$ points inward along $\p_-W$ and outward along
$\p_+W$. 
A generalized Smale cobordism $(W,\phi,X)$ is called {\em elementary} 
if there are no $X$-trajectories between
different critical points of $\phi$. 

If $(W,\phi,X)$ is an elementary generalized Smale cobordism, then the
stable manifold of each nondegenerate critical point $p$ is a 
disc $D_p^-$ which intersects $\p_-W$ along a sphere $S_p^-=\p D_p^-$.
We call $D_p^-$ and $S_p^-$ the {\em stable disc (resp.~sphere)} of $p$. 
Similarly, the unstable manifolds and their intersections with $\p_+W$
are called {\em unstable discs and spheres}. 
For an embryonic critical point $p$, the closure of the (un)stable
manifold is the {\em (un)stable half-disc $\wh D_p^\pm$} intersecting
$\p_\pm W$ along the hemisphere $\wh S_p^\pm$.  

An {\em admissible partition} of a generalized Smale cobordism 
$(W,\phi,X)$ is a finite sequence
$m=c_0<c_1<\dots<c_N=M$  of regular values of $\phi$, where we  
denote $\phi|_{\p_-W}=m$ and $\phi|_{ \p_+ W}=M$, such that each
subcobordism $W_k=\{c_{k-1}\leq \phi\leq c_k\}$, $k=1,\dots, N$, is
elementary. 
The following lemma is straightforward.

\begin{lemma}\label{lm:existence-partition-M}
Any generalized Smale cobordism admits an admissible partition into elementary
cobordisms. Similarly, for any exhausting generalized Morse function
$\phi$ and gradient-like vector field $X$ on a 
noncompact manifold $V$, one can find regular values
$c_0<\min\phi<c_1<\dots\to\infty$ such that the cobordisms
$W_k=\{c_{k-1}\leq\phi\leq c_k\}$, $k=1,\dots,$ are elementary. If
$\phi$ has finitely many critical points, then all but finitely many
of these cobordisms have no critical points. 
\end{lemma}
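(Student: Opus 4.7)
The plan is to exploit the gradient-like condition $X\cdot\phi\geq\delta(|X|^2+|d\phi|^2)$: at any point where $X\neq 0$ (equivalently, $d\phi\neq 0$) we have $X\cdot\phi>0$, so $\phi$ is strictly increasing along every non-constant orbit of $X$. In particular, if two distinct critical points $p\neq q$ are joined by an $X$-trajectory, then $\phi(p)<\phi(q)$, and consequently no two critical points on a common level set $\phi^{-1}(v)$ can be $X$-connected. Thus an elementary piece is obtained as soon as we isolate a single critical value between two regular values.

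For a compact Smale cobordism $(W,\phi,X)$, the critical points of the generalized Morse function $\phi$ are isolated (both nondegenerate and embryonic critical points are isolated zeros of $X$), hence finitely many in the compact manifold $W$. Let $m<v_1<v_2<\cdots<v_N<M$ be the finite list of distinct critical values. Since this set is nowhere dense, we can interleave regular values $m=c_0<v_1<c_1<v_2<\cdots<v_N<c_N=M$. By construction the slab $W_k:=\{c_{k-1}\leq\phi\leq c_k\}$ contains critical points only on level $v_k$, so the monotonicity observation above forces all such critical points to be mutually $X$-disconnected; hence $W_k$ is elementary.

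In the noncompact case, $\phi$ is proper and bounded below, so $\phi^{-1}([a,b])$ is compact for any $a<b$ and therefore contains only finitely many critical points. Choose any $c_0<\min\phi$ (vacuously regular, since $\phi^{-1}(c_0)=\emptyset$). Enumerate the distinct critical values as a sequence $v_1<v_2<\cdots$ which, by properness, is either finite or diverges to $+\infty$. Inductively pick a regular value $c_k$ with $v_k<c_k<v_{k+1}$, and once the critical values are exhausted continue with any increasing sequence of regular values tending to $\infty$ (such values are dense because the critical set of $\phi$ is locally finite). The same argument as above shows each $W_k=\{c_{k-1}\leq\phi\leq c_k\}$ is elementary, and if $\phi$ has only finitely many critical points then the slabs beyond $v_N$ contain no critical points whatsoever.

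There is essentially no hard step: the lemma rests on the strict monotonicity of $\phi$ along non-constant $X$-orbits (immediate from the gradient-like inequality) and on the local finiteness of the critical set of a generalized Morse function (immediate from the local normal forms for nondegenerate and embryonic critical points). If any point deserves care, it is only verifying that embryonic zeros of $X$ are isolated, which follows from the birth--death normal form $tx_1+x_1^3-\sum x_i^2+\sum x_j^2$ at $t=0$ whose gradient vanishes only at the origin.
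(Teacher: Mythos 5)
Your proof is correct and is essentially the expected argument; the paper itself declares the lemma ``straightforward'' and omits a proof. You correctly identify the two ingredients: local finiteness of the critical set of a generalized Morse function (nondegenerate and embryonic zeros of $X$ are both isolated, so the critical set meets any compact sublevel set in finitely many points), and strict monotonicity of $\phi$ along non-constant $X$-orbits (immediate from the gradient-like inequality). The latter shows that any slab $\{c_{k-1}\leq\phi\leq c_k\}$ containing at most one critical value is automatically elementary, since an $X$-trajectory between two critical points would force a strict inequality between their $\phi$-values, while all critical points in such a slab sit on the same level. Interleaving the (finitely many, resp.\ discretely escaping) critical values with regular values then gives the required partition, and the last sentence of the lemma follows since beyond the largest critical value all slabs are critical-point free.
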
 

\subsection{Morse and Smale homotopies}\label{sec:Smale-hom}

  
A smooth family $(W,\phi_t)$, $t\in[0,1]$, of generalized Morse
cobordism structures is called a {\em Morse homotopy} if there is a
finite set $A\subset(0,1)$ with the following properties: 
\begin{itemize}
\item for each $t\in A$ the function $\phi_t$ has a unique birth-death
  type critical point $e_t$ such that $\phi_t(e_t)\neq\phi_t(q)$ for
  all other critical points $q$ of $\phi_t$;  
\item for each $t\notin A$ the function $\phi_t$ is Morse.  
\end{itemize}
A {\em Smale homotopy} is a smooth family $(W,X_t,\phi_t)$,
$t\in[0,1]$, of generalized Smale cobordism structures such that
$(W,\phi_t)$ a Morse homotopy.
A Smale homotopy $\fS_t=(W,X_t,\phi_t)$, $t\in[0,1]$ is called an
{\em elementary Smale homotopy}  
of Type I, IIb, IId, respectively, if the following holds:
\begin{itemize}
\item {Type I.} $\fS_t$ is an elementary Smale cobordism for all
  $t\in[0,1]$. 
\item {Type IIb (birth).} There is $t_0\in(0,1)$ such that for $t<t_0$
  the  function $\phi_t$ has no critical points, $\phi_{t_0}$ has a
  birth type critical point, and for $t>t_0$ the
  function $\phi_t$  has has two  critical points $p_t$ and  $q_t$ of
  index  $i$ and $i-1$, respectively, connected by a unique
  $X_t$-trajectory. 
\item {Type IId (death).}  There is $t_0\in(0,1)$ such that for
  $t>t_0$ the function $\phi_t$  has no critical points,
  $\phi_{t_0}$ has a death type critical point, and for
  $t<t_0$ the function $\phi_t$ has two critical points $p_t$
  and  $q_t$ of index  $i$ and $i-1$, respectively, connected by a
  unique $X_t$-trajectory.  
\end{itemize}
We will also refer to an elementary Smale homotopy of Type IIb
(resp.~IId) as a 
{\em creation (resp.~cancellation) family}.  
  
An {\em admissible partition} of a Smale homotopy
$\fS_t=(W,X_t,\phi_t)$, $t\in[0,1]$, is 
a sequence $0=t_0<t_1<\dots<t_p=1$ of parameter
values, and for each $k=1,\dots, p$ a finite sequence of functions  
$$
   m(t)=c^k_0(t)<c^k_1(t)<\dots<c^k_{N_k}(t)=M(t), \qquad
   t\in[t_{k-1},t_k], 
$$
where $m(t):=\phi_t(\p_-W)$ and $M(t):=\phi_t(\p_+W)$, such that
$c^k_j(t)$, $j=0,\dots, N_k$ are regular values of $\phi_t$
and each Smale homotopy
$$
   \fS^k_j := \left(W^k_j(t):=\{c^k_{j-1}(t)\leq
   \phi_t\leq c^k_j(t)\}, X_t|_{W^k_j(t)},\phi_t|_{W^k_j(t)}\right)_{t\in[t_{k-1},t_k]} 
$$
is elementary.

\begin{lemma}\label{lm:admiss-homotopy}
Any Smale homotopy admits an admissible partition.  
\end{lemma}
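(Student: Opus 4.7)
The plan is to reduce the parametric statement to the non-parametric Lemma~\ref{lm:existence-partition-M} via a local-to-global compactness argument. For each $t_0\in[0,1]$ I will produce an admissible partition defined on a small open interval $I_{t_0}$ about $t_0$, and then extract a finite subcover of $[0,1]$.

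Suppose first $t_0\notin A$, so $\phi_{t_0}$ is Morse. Apply Lemma~\ref{lm:existence-partition-M} and, if necessary, add further regular values to obtain $m(t_0)=c_0<c_1<\dots<c_N=M(t_0)$ such that each slab $\{c_{j-1}\leq\phi_{t_0}\leq c_j\}$ contains at most one critical point of $\phi_{t_0}$. Since nondegenerate critical points persist smoothly under perturbation (no new ones appear locally) and since the set of regular values is open, for $t$ in a small $I_{t_0}$ each constant $c_j$ remains a regular value of $\phi_t$ and each slab $\{c_{j-1}\leq\phi_t\leq c_j\}$ still contains at most one critical point, hence is elementary as a Smale cobordism. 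Setting $c_0^{(t_0)}(t):=m(t)$, $c_N^{(t_0)}(t):=M(t)$ and $c_j^{(t_0)}(t):=c_j$ for $0<j<N$, each slab restricts on $I_{t_0}$ to a Type~I elementary Smale homotopy.

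Suppose now $t_0\in A$, with embryonic critical point $e_{t_0}$ of value $v_0:=\phi_{t_0}(e_{t_0})$. Choose regular values $c^-<v_0<c^+$ of $\phi_{t_0}$ such that $(c^-,c^+)$ contains no other critical value. By the standard universal unfolding of the birth-death singularity, for $t$ in a small $I_{t_0}$ the slab $\{c^-\leq\phi_t\leq c^+\}$ contains exactly the critical points of the birth-death family: none on one side of $t_0$, the embryonic point at $t=t_0$, and a cancelling pair $p_t,q_t$ of indices $i,i-1$ connected by a unique $X_t$-trajectory on the other side. This is by definition a Type~IIb or Type~IId elementary Smale homotopy. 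The complementary regions $\{\phi_t\leq c^-\}$ and $\{\phi_t\geq c^+\}$ contain only nondegenerate critical points of $\phi_{t_0}$, and are further partitioned into Type~I pieces as in the Morse case, possibly after shrinking $I_{t_0}$.

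By compactness of $[0,1]$ extract a finite subcover $I_{\tau_1},\dots,I_{\tau_p}$ and pick a partition $0=t_0<t_1<\dots<t_p=1$ with $[t_{k-1},t_k]\subset I_{\tau_k}$ such that each element of $A$ lies in the interior of a unique interval $[t_{k-1},t_k]$. On $[t_{k-1},t_k]$ take for $c_j^k(t)$ the functions produced by the local construction at $\tau_k$. This yields the required admissible partition. The only genuine subtlety is the local picture near a birth-death parameter, but this is precisely the configuration that the Type~IIb and Type~IId definitions are designed to describe, so no additional argument is required there.
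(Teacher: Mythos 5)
Your approach — producing a local admissible partition near each parameter value and then invoking compactness of $[0,1]$ — is essentially the same as the paper's, which constructs the partition on $\Op A$ and then extends over the complement; the two presentations differ only in whether the birth-death times or the whole interval is handled first, and since the definition of admissible partition places no compatibility requirement at the parameter values $t_k$, your finite-subcover version works just as well.

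There is, however, a gap in the Morse case. You claim to add regular values so that each slab $\{c_{j-1}\le\phi_{t_0}\le c_j\}$ contains \emph{at most one critical point} of $\phi_{t_0}$, and then conclude elementarity for nearby $t$ from this. But the paper's notion of Morse function does not forbid coincident critical values, so two or more critical points of $\phi_{t_0}$ can sit on the same level, and no regular value can separate them. In that case a slab must contain several critical points, and the implication ``at most one critical point, hence elementary'' no longer applies. The conclusion you want is still true, but it requires a different argument: the slabs coming out of Lemma~\ref{lm:existence-partition-M} have all their critical points on a common level, so there are no $X_{t_0}$-trajectories between them because $\phi$ is strictly increasing along trajectories; one must then argue that this trajectory-free condition is \emph{open} in $t$, for instance by observing that the closures of the relevant stable and unstable discs are compact, pairwise disjoint at $t_0$, and depend continuously on $t$, so they stay disjoint for $t$ near $t_0$. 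This openness-of-elementarity step is exactly what your ``at most one critical point'' claim was meant to sidestep, and it is the content you would need to supply to close the argument. The birth-death case is handled correctly.
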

 
\begin{proof}
Let $A\subset(0,1)$ be the finite subset in the definition of a Smale
homotopy. Using Lemma~\ref{lm:existence-partition-M}, we now first
construct an admissible partition on $\Op A$ and then extend it over
$[0,1]\setminus\Op A$. 
\end{proof}

\subsection{Equivalence of elementary Smale homotopies}

We define the {\em profile} (or Cerf diagram) of a Smale homotopy
$\fS_t=(W,X_t,\phi_t)$, $t\in[0,1]$, as the subset
$C(\{\phi_t\})\subset \R\times\R$ such that 
$C(\{\phi_t\})\cap (t\times\R)$ is the set of critical values of the
function $\phi_t$.  
We will use the notion of profile only for elementary homotopies.
   
The following two easy lemmas are proved in~\cite{CieEli12}. The first
one shows that if two elementary Smale homotopies have
the same profile, then their functions are related by
diffeomorphisms. 

\begin{lemma}\label{lm:Morse-profiles}
Let $\fS_t=(W,X_t,\phi_t)$ and $\wt\fS_t=(W,\wt X_t,\wt\phi_t)$,
$t\in[0,1]$, be two elementary Smale 
homotopies with the same profile such that $\fS_0=\wt\fS_0$.
Then there exists a diffeotopy $h_t:W\to W$ with $h_0=\id$ such that
$\phi_t=\wt\phi_t\circ h_t$ for all $t\in[0,1]$.  
Moreover, if $\phi_t=\wt\phi_t$ near $\p_+W$ and/or $\p_-W$ we can
arrange $h_t=\id$ near $\p_+W$ and/or $\p_-W$. 
\end{lemma}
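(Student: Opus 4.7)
The plan is to construct $h_t$ by a Moser-type deformation, realizing $h_t$ as the flow in $t$ (starting from $h_0 = \id$) of a time-dependent vector field $Z_t := (\p_t h_t) \circ h_t^{-1}$ on $W$. Differentiating the required identity $\phi_t = \wt\phi_t \circ h_t$ in $t$ turns it into the pointwise equation
\begin{equation*}
d\wt\phi_t(Z_t) \;=\; \dot\phi_t \circ h_t^{-1} - \dot{\wt\phi_t} \;=:\; g_t .
\end{equation*}
Since $\wt X_t$ is gradient-like for $\wt\phi_t$, one has $d\wt\phi_t(\wt X_t) > 0$ off $\Crit(\wt\phi_t)$, so outside the critical locus I would take the tautological solution $Z_t := (g_t/d\wt\phi_t(\wt X_t))\,\wt X_t$. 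The entire content of the lemma is then to extend $Z_t$ smoothly across $\Crit(\wt\phi_t)$.

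My first step would be to pass to an admissible partition using Lemma~\ref{lm:admiss-homotopy}, reducing to the case of a single critical point $p_t$ per subcobordism at each $t$. The initial condition $\fS_0 = \wt\fS_0$ together with profile equality uniquely pairs $p_t$ with the critical point $\wt p_t$ of $\wt\phi_t$ sharing its critical value $c_t$; in particular $\dot\phi_t(p_t) = \dot c_t = \dot{\wt\phi_t}(\wt p_t)$, so $g_t$ will vanish at $\wt p_t$ as soon as $h_t(p_t) = \wt p_t$. Next I would apply the parametric Morse lemma to each family (and, for Type IIb or IId homotopies, the Cerf-style parametric birth--death normal form built into the definition of a generalized Morse function): this furnishes smooth families of charts $\psi_t$ around $p_t$ and $\wt\psi_t$ around $\wt p_t$ in which $\phi_t = c_t + Q$ and $\wt\phi_t = c_t + Q$ for a common normal form $Q$. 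Using $\fS_0 = \wt\fS_0$ the charts may be taken to agree at $t=0$. The chart identification $h_t^{\loc} := \wt\psi_t^{-1} \circ \psi_t$ is then a smooth family of local diffeomorphisms solving $\phi_t = \wt\phi_t \circ h_t^{\loc}$ on the nose. Replacing $\wt X_t$ inside the chart by the standard pseudogradient of $Q$ (permissible since the space of gradient-like vector fields for a fixed function is a convex cone, Lemma~\ref{lem:cone}), the infinitesimal generator of $h_t^{\loc}$ agrees to high order at $\wt p_t$ with the tautological ansatz, and a cutoff supported in a thin annular collar of the chart patches the two prescriptions into a smooth global $Z_t$. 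Integrating $Z_t$ from $h_0 = \id$ produces the required diffeotopy.

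The boundary refinement follows directly: if $\phi_t = \wt\phi_t$ near $\p_\pm W$, then a cutoff in the construction of $Z_t$ can force $Z_t \equiv 0$ there, keeping $h_t = \id$ on that neighborhood. The main obstacle I anticipate lies in handling the birth--death time $t_0$ of a Type IIb or IId homotopy, where the critical set of $\phi_t$ (and of $\wt\phi_t$) bifurcates. One must simultaneously normalize $\phi_t$ and $\wt\phi_t$ by the Cerf birth--death model smoothly in $t$ across $t_0$, matched at $t_0$ itself, while simultaneously adjusting $\wt X_t$ into a model pseudogradient that bifurcates consistently with the critical set. Once this delicate parametric normalization is in place, the gluing cutoff can be chosen uniformly and the Moser argument goes through.
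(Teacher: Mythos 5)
Your high-level strategy — a Moser flow off the critical locus, combined with parametric Morse/Cerf normal forms near the critical points — is the right one, and the reduction to the equation $d\wt\phi_t(Z_t)=g_t$ with $g_t=\dot\phi_t\circ h_t^{-1}-\dot{\wt\phi_t}$ is set up correctly. However, the patching step has a genuine gap, and the specific claim that ``the infinitesimal generator of $h_t^{\loc}$ agrees to high order at $\wt p_t$ with the tautological ansatz'' is false. In the Morse chart $g_t$ vanishes only to first order at $\wt p_t$ (indeed $g_t(\wt p_t)=\dot c_t-\dot c_t=0$, but $dg_t(\wt p_t)$ has no reason to vanish), while $d\wt\phi_t(\wt X_t)$ vanishes to second order; so the quotient $g_t/d\wt\phi_t(\wt X_t)$ blows up like $|y-\wt p_t|^{-1}$ and the tautological vector field $Z_t^{\ext}=\frac{g_t}{d\wt\phi_t(\wt X_t)}\wt X_t$ is bounded but in general \emph{discontinuous} at $\wt p_t$, so it cannot agree to high order with the smooth local generator.

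More seriously, the cutoff gluing of $Z_t^{\loc}$ with $Z_t^{\ext}$ does not preserve the conjugation identity. The two candidates solve \emph{different} Moser equations: the generator of $h_t^{\loc}$ satisfies $d\wt\phi_t(Z_t^{\loc})=\dot\phi_t\circ(h_t^{\loc})^{-1}-\dot{\wt\phi_t}$, while the sought global $Z_t$ must satisfy $d\wt\phi_t(Z_t)=\dot\phi_t\circ h_t^{-1}-\dot{\wt\phi_t}$ for the as-yet-unknown global $h_t$. If you form $Z_t=\chi Z_t^{\loc}+(1-\chi)Z_t^{\ext}[h_t]$ and compute $\ddt\bigl(\wt\phi_t\circ h_t-\phi_t\bigr)$, you find it equals $(\chi\circ h_t)\cdot\bigl(\dot\phi_t\circ(h_t^{\loc})^{-1}\circ h_t-\dot\phi_t\bigr)$, which vanishes only where $\chi=0$ or where $h_t=h_t^{\loc}$; on the annulus where $0<\chi<1$ the identity $\phi_t=\wt\phi_t\circ h_t$ is destroyed. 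The standard way to close this gap is to use the normal-form diffeotopies $h_t^{\loc}$ to first build (via isotopy extension) a global diffeotopy $H_t:W\to W$ with $H_0=\id$ such that $H_t=h_t^{\loc}$ near the critical locus, replace $\wt\phi_t$ by $\phi_t':=\wt\phi_t\circ H_t$, and then run the Moser argument for the pair $(\phi_t,\phi_t')$, which now \emph{agree on an open neighborhood of the critical locus}: the numerator $g_t$ then vanishes identically there (not merely to first order), the tautological ansatz extends smoothly by zero across the critical points, and $h_t:=H_t\circ k_t$ gives the desired diffeotopy. The boundary refinement is handled as you describe. With this reorganization your proposal becomes a correct proof; as written, the gluing does not go through.
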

 
The second lemma provides elementary Smale homotopies with prescribed
profile. 

\begin{lemma}\label{lem:prescribe-profile}
Let $(W,X,\phi)$ be an elementary Smale cobordism with $\phi|_{\p_\pm
  W}=a_\pm$ and critical points $p_1,\dots,p_n$ of values
$\phi(p_i)=c_i$. For $i=1,\dots,n$ let $c_i:[0,1]\to(a_-,a_+)$ be
smooth functions with $c_i(0)=c_i$. Then there exists a smooth family
$\phi_t$, $t\in[0,1]$, of Lyapunov functions for $X$ with
$\phi_0=\phi$ and $\phi_t=\phi$ on $\Op\p W$ such that
$\phi_t(p_i)=c_i(t)$. 
\end{lemma}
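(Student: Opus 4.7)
The strategy is to modify $\phi$ by adding bump functions localized near each critical point, shifting only that critical value. Elementarity enters crucially: if there were an $X$-trajectory from $p_j$ to $p_i$, then the Lyapunov condition along that orbit would force $c_j(t)<c_i(t)$ for every $t$, and arbitrary $c_i(t)$ would be impossible; since $(W,X,\phi)$ is elementary, no such global constraints arise, and we may treat each critical point independently.

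\textbf{Construction.} Pick pairwise disjoint compact balls $U_i\subset\Int W$ around the $p_i$, smaller open balls $V_i\Subset U_i$ centered at $p_i$, and smooth bump functions $\psi_i\colon W\to[0,1]$ with $\psi_i\equiv 1$ on $V_i$ and $\supp\psi_i\subset U_i$. Set
\[
   \phi_t\ :=\ \phi\ +\ \sum_{i=1}^n\bigl(c_i(t)-c_i\bigr)\,\psi_i.
\]
Then $\phi_t(p_i)=c_i(t)$, $\phi_0=\phi$, and $\phi_t\equiv\phi$ outside $\bigcup_iU_i$, so in particular on $\Op\p W$. The family is clearly smooth in $t$.

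\textbf{Lyapunov check.} On each $V_i$ the function $\psi_i\equiv 1$, so $\phi_t-\phi$ is locally constant and the Lyapunov inequality for $\phi$ passes to $\phi_t$ verbatim; outside $\bigcup_iU_i$ we have $\phi_t\equiv\phi$. On the compact transition annulus $\overline{U_i\setminus V_i}$ there are no critical points of $\phi$, so $|X|$, $|d\phi|$, and $X\cdot\phi$ admit positive lower bounds there. The perturbation contributes $(c_i(t)-c_i)X\cdot\psi_i$ to $X\cdot\phi_t$ and $(c_i(t)-c_i)d\psi_i$ to $d\phi_t$; by taking the transition region wide enough, $\|d\psi_i\|_{C^0}$ can be made small enough that these contributions are dominated by the lower bound on $X\cdot\phi$, preserving $X\cdot\phi_t\geq\delta_t(|X|^2+|d\phi_t|^2)$ with a slightly smaller positive $\delta_t$.

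\textbf{Main obstacle.} The quantitative balance required in the Lyapunov check is the only delicate point: the width of the transition annulus one needs scales with $\max_t|c_i(t)-c_i|$, while that width is bounded above by the $C^2$-geometry of $\phi$ and $X$ near $p_i$ and by the mutual distance of the critical points. When the prescribed variation is too large for a single-step construction to succeed, one partitions $[0,1]$ into subintervals on which each $|c_i(t)-c_i(s_k)|$ is small enough, applies the construction on each subinterval, and smoothly concatenates the resulting families, exploiting the convex-cone property of Lyapunov functions for $X$ (Lemma~\ref{lem:cone}) to resolve any mismatch at the junctions.
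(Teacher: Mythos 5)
Your bump functions $\psi_i$ are supported in fixed balls $U_i\subset\Int W$, and this makes the construction too local to carry a critical value across the whole interval $(a_-,a_+)$, even with the subdivision trick. The obstruction is visible along the unstable trajectory of $p_i$: if $x$ is the point where it leaves $U_i$, then $\phi_t(x)=\phi(x)$ is unchanged (since $\psi_i(x)=0$), while $\phi_t(p_i)=c_i(t)$; since $\phi_t$ must increase along the segment from $p_i$ to $x$, we get the hard constraint $c_i(t)<\phi(x)$. The number $\phi(x)$ is fixed once $U_i$ is chosen, and $U_i$ is a ball disjoint from the other $U_j$ and from $\p W$; whenever another critical point sits near the unstable disc of $p_i$, this caps $\phi(x)$ strictly below $a_+$, whereas the lemma allows $c_i(t)$ arbitrarily close to $a_+$. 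Iterating over subintervals does not lift this ceiling: at each step the new bump must again be supported in a ball of bounded size, and the exit value of the unstable trajectory from that ball has the same geometric bound. The appeal to Lemma~\ref{lem:cone} at the junctions is not pertinent either, since that lemma concerns constant nonnegative combinations of Lyapunov functions, not the smooth concatenation of one‑parameter families whose intermediate functions fail to be Lyapunov.

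The fix is to replace the ball bumps by weights that are constant along the flow and supported near the whole (un)stable discs, and to replace the additive shift by a target reparametrization. Elementarity is used precisely to guarantee that the compact sets $\ol{D_{p_i}^-\cup D_{p_i}^+}$ are pairwise disjoint, hence admit disjoint flow‑saturated neighborhoods; this is stronger than mere disjointness of the $p_i$, which is all your construction exploits. Concretely, pick smooth $\mu_i\colon W\to[0,1]$ with $X\cdot\mu_i=0$, $\mu_i\equiv1$ near $\ol{D_{p_i}^-\cup D_{p_i}^+}$, $\mu_i\equiv0$ near $\ol{D_{p_j}^-\cup D_{p_j}^+}$ for $j\ne i$, and $\sum_i\mu_i\le1$ (obtained by choosing disjoint bump functions near the stable spheres on $\p_-W$ and spreading them along the flow). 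Pick diffeomorphisms $g_{i,t}\colon[a_-,a_+]\to[a_-,a_+]$ smooth in $t$, equal to $\id$ for $t=0$ and near the endpoints, with $g_{i,t}(c_i)=c_i(t)$, and set
\[
\phi_t := \sum_i\mu_i\,\bigl(g_{i,t}\circ\phi\bigr) + \Bigl(1-\sum_i\mu_i\Bigr)\,\phi .
\]
Then $X\cdot\phi_t=(X\cdot\phi)\bigl[\sum_i\mu_ig_{i,t}'(\phi)+(1-\sum_i\mu_i)\bigr]>0$ away from the critical points because $X\cdot\mu_i=0$ kills the troublesome term that your $X\cdot\psi_i$ produces, while near $p_i$ one has $\phi_t=g_{i,t}\circ\phi$, which is readily checked to be Lyapunov. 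The boundary behavior and the values $\phi_t(p_i)=c_i(t)$ are immediate. Your plan has the right broad shape (localize and decouple the critical points), but the ball neighborhoods and the non‑flow‑invariant bumps are the wrong tools and leave a genuine gap.
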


\subsection{Holonomy of Smale cobordisms}

Let $(W,X,\phi)$ be a Smale cobordism such that the function $\phi$
has no critical points. The 
{\em holonomy}
\index{holonomy} 
of $X$ is the diffeomorphism 
$$
   \Gamma_X:\p_+W\to\p_-W,
$$
which maps $x\in\p_+W$ to the intersection of its trajectory under the
flow of $-X$ with $\p_-W$. 

Consider now a Morse cobordism $(W,\phi)$ without critical points. 
Denote by $\XX(W,\phi)$ the space of all
gradient-like vector fields for $\phi$. Note that the holonomies of
all $X\in\XX(W,\phi)$ are isotopic. We denote by $\DD(\p_+W,\p_-W)$
the corresponding path component in the space of diffeomorphisms from
$\p_+W$ to $\p_-W$. All spaces are equipped with the
$C^\infty$-topology. 

Recall that a continuous map $p:E\to B$ is a {\em Serre fibration} 
if it has the homotopy lifting property for all closed discs $D^k$, i.e.,
given a homotopy $h_t:D^k\to B$, $t\in[0,1]$, and a lift $\wt
h_0:D^k\to E$ with $p\circ\wt h_0=h_0$, there exists a homotopy $\wt
h_t:D^k\to E$ with $p\circ\wt h_t=h_t$.  
We omit the proof of the following easy lemma. 

\begin{lemma}\label{lm:Morse-Serre}
Let $(W,\phi)$ be a Morse cobordism without critical points. Then
the map $\XX(W,\phi)\to \DD(\p_+W,\p_-W)$ that assigns to $X$ its
holonomy $\Gamma_X$ is a Serre fibration. In particular: 

(i) Given $X\in \XX(W,\phi)$ and an isotopy $h_t\in
  \DD(\p_+W,\p_-W)$, $t\in[0,1]$, with  $h_0=\Gamma_X$, there exists a
  path $X_t\in\XX(W,\phi)$ with $X_0=X$ such that $\Gamma_{X_t}=h_t$
  for all $t\in[0,1]$.
   
(ii) Given a path $X_t\in\XX(W,\phi)$, $t\in[0,1]$, and a path
  $h_t\in\DD(\p_+W,\p_-W)$ which is homotopic to $\Gamma_{X_t}$ with
  fixed endpoints, there exists a path $\wt X_t\in\XX(W,\phi)$ with
  $\wt X_0=X_0$ and $\wt X_1= X_1$ such that $\Gamma_{\wt X_t}= h_t$
  for all $t\in[0,1]$.
\end{lemma}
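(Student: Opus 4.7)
The approach is to trivialize the cobordism using the flow of a gradient-like vector field, and then to construct parametric lifts of homotopies of holonomies by an explicit interpolation. Because the trivialization depends smoothly on $X$, this will produce the Serre fibration property in all dimensions $k\geq 0$.

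First, since $\phi$ has no critical points, each $X\in\XX(W,\phi)$ integrates to a diffeomorphism $F_X:\p_+W\times[a_-,a_+]\to W$ sending $(y,s)$ to the unique point on the $X$-trajectory through $y$ at level $\phi=s$. It satisfies $\phi\circ F_X(y,s)=s$, $F_X(\cdot,a_+)=\id$, $F_X(\cdot,a_-)=\Gamma_X$, and $F_X^*X=g_X\,\p_s$ where $g_X:=X\cdot\phi>0$. Crucially, $F_X$ depends smoothly on $X$.

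Given a smooth homotopy $H:D^k\times[0,1]\to\DD(\p_+W,\p_-W)$ together with a lift $\wt H_0:D^k\to\XX(W,\phi)$ of $H(\cdot,0)$, put $X_u:=\wt H_0(u)$ and $\Phi_{u,t}:=\Gamma_{X_u}^{-1}\circ H(u,t)$, a smooth family of self-isotopies of $\p_+W$ with $\Phi_{u,0}=\id$. Fix a smooth function $\sigma:[a_-,a_+]\to[0,1]$ with $\sigma(a_-)=0$ and $\sigma(a_+)=1$, set $\eta_{s,u,t}:=\Phi_{u,t\sigma(s)}^{-1}$, and, in coordinates on $\p_+W\times[a_-,a_+]$, define
$$
\wt X_{u,t}(x,s)\;:=\;g_{X_u}(x,s)\,\bigl(w_{s,u,t}(x),\,1\bigr),\qquad w_{s,u,t}(x):=\p_s\eta_{s,u,t}\bigl(\eta_{s,u,t}^{-1}(x)\bigr).
$$
Since $\eta_{a_-,u,t}=\id$ and $\eta_{a_+,u,t}=\Phi_{u,t}^{-1}$, the trajectories of $\wt X_{u,t}$, being reparametrizations of $s\mapsto(\eta_{s,u,t}(y),s)$, realize $\Phi_{u,t}$ as the holonomy in the trivialization. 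Pushing back by $F_{X_u}$, the family $\wt H(u,t):=(F_{X_u})_*\wt X_{u,t}$ lies in $\XX(W,\phi)$ (gradient-likeness follows from $\wt X_{u,t}\cdot\phi=g_{X_u}>0$ and compactness of $W$, which permits a uniformly small choice of $\delta$), satisfies $\Gamma_{\wt H(u,t)}=\Gamma_{X_u}\circ\Phi_{u,t}=H(u,t)$, and agrees with $\wt H_0$ at $t=0$.

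Statement (i) is the case $k=0$. For (ii), regard a homotopy rel endpoints $H:[0,1]_t\times[0,1]_u\to\DD$ with $H(\cdot,0)=\Gamma_{X_\cdot}$ and $H(\cdot,1)=h_\cdot$, and apply the construction above with $t$ as the base parameter and $u$ as the lifting variable. Since $H$ is constant in $u$ at $t=0,1$, the isotopies $\Phi_{t,u}$ are the identity there, so the construction automatically delivers $\wt H(0,u)\equiv X_0$ and $\wt H(1,u)\equiv X_1$; then $\wt X_t:=\wt H(t,1)$ solves (ii). The single delicate point in the whole argument is the scaling factor $g_{X_u}$ in the formula for $\wt X_{u,t}$: without it one would only get a vector field with the correct integral curves as unparametrized sets, whereas the factor makes $\wt X_{u,0}$ equal to $F_{X_u}^*X_u$ on the nose, which is what is needed to extend the prescribed initial lift $\wt H_0$.
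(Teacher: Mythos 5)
The paper explicitly omits the proof of this lemma (``We omit the proof of the following easy lemma''), so there is no paper argument to compare against. Your construction is correct and, as far as I can tell, exactly what the authors have in mind: trivialize the cobordism $W\cong\p_+W\times[a_-,a_+]$ by the $X$-flow, normalizing levels so $\phi\circ F_X(y,s)=s$, observe that $F_X$ depends smoothly on $X$ (smooth dependence of ODE solutions on parameters), and then lift a homotopy of holonomies by the explicit twisting field $\wt X_{u,t}=g_{X_u}\,(w_{s,u,t},1)$ built from the suspension of the isotopy $\Phi_{u,t\sigma(s)}^{-1}$. The computations check out: at $t=0$ one has $\eta_{s,u,0}=\id$, hence $w_{s,u,0}=0$ and $\wt X_{u,0}=g_{X_u}\p_s=F_{X_u}^*X_u$; the integral curves of $\wt X_{u,t}$ are the reparametrized curves $s\mapsto(\eta_{s,u,t}(y),s)$, giving holonomy $\Phi_{u,t}$ in the trivialization, which conjugates under $F_{X_u}$ to $\Gamma_{X_u}\circ\Phi_{u,t}=H(u,t)$ in $W$; and $\wt X_{u,t}\cdot\phi=g_{X_u}>0$ together with compactness of $W$ gives gradient-likeness. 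Your observation that swapping the roles of base and lifting parameters yields (ii), with the rel-endpoints condition forcing $\Phi_{0,u}\equiv\Phi_{1,u}\equiv\id$ and hence $\wt H(0,u)\equiv X_0$, $\wt H(1,u)\equiv X_1$, is also right. The only point worth making explicit, though standard and certainly intended to be glossed over in a lemma labeled ``easy,'' is that the homotopy $H$ must be smooth in all variables for $\p_s\eta_{s,u,t}$ to make sense; for a merely continuous family in $\DD(\p_+W,\p_-W)$ one first smooths rel the appropriate part of the boundary of the parameter domain.
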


As a consequence, we obtain

\begin{lemma}\label{lm:fixing-A-Morse}
Let $X_t$, $Y_t$ be two paths in $\XX(W,\phi)$ starting at the
same point $X_0=Y_0$. Suppose that for a subset $A\subset \p_+W$,
one has $\Gamma_{X_t}(A)=\Gamma_{Y_t}(A)$ for all $t\in[0,1]$. 
Then there exists a path $\wh X_t\in\XX(W,\phi)$ such that
\begin{enumerate}
\item $\wh X_t=X_{2t}$ for $t\in[0,\frac12]$;
\item $\wh X_1=Y_1$;
\item $\Gamma_{\wh X_t}(A)=\Gamma_{Y_1}(A)$ for $t\in[\frac12,1]$.
\end{enumerate}
\end{lemma}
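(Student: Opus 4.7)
The plan is to construct $\wh X_t$ piecewise on $[0,\tfrac12]$ and $[\tfrac12,1]$. The first half is forced by condition (i): set $\wh X_t = X_{2t}$, so $\wh X_{1/2} = X_1$, and by the hypothesis at $t=1$ we already have $\Gamma_{\wh X_{1/2}}(A) = \Gamma_{X_1}(A) = \Gamma_{Y_1}(A)$, so the starting holonomy of the second half sits in the required set. The remaining task is to construct, on $[\tfrac12,1]$, a path in $\XX(W,\phi)$ from $X_1$ to $Y_1$ whose holonomy sends $A$ to $\Gamma_{Y_1}(A)$ throughout.

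For this, I would apply Lemma~\ref{lm:Morse-Serre}(ii). As the auxiliary path from $X_1$ to $Y_1$ in $\XX(W,\phi)$, take the back-and-forth path $Z_s$, $s\in[0,1]$, obtained by reversing $X_t$ down to $X_0=Y_0$ and then following $Y_t$ up to $Y_1$. The core of the argument is to produce a path $h_s$ in $\DD(\p_+W,\p_-W)$ from $\Gamma_{X_1}$ to $\Gamma_{Y_1}$ with $h_s(A)=\Gamma_{Y_1}(A)$ for all $s$, which is homotopic to $\Gamma_{Z_s}$ rel endpoints.

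The key observation enabling this is that the hypothesis $\Gamma_{X_t}(A)=\Gamma_{Y_t}(A)$ means the family of diffeomorphisms $f_t := \Gamma_{Y_t}^{-1}\circ\Gamma_{X_t}$ of $\p_+W$ preserves $A$ setwise for every $t$, with $f_0=\id$ (using $X_0=Y_0$). I would then introduce the two-parameter family
\[
   K(u,v) := \Gamma_{Y_v}\circ\Gamma_{Y_u}^{-1}\circ\Gamma_{X_u}, \qquad (u,v)\in[0,1]^2,
\]
and verify the three identities $K(u,u)=\Gamma_{X_u}$, $K(0,v)=\Gamma_{Y_v}$, and $K(u,1)(A)=\Gamma_{Y_1}(A)$. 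Under $K$, the path $\Gamma_{Z_s}$ pulls back to the ``L-shape'' going from $(1,1)$ down the diagonal to $(0,0)$ and then up the edge $u=0$ to $(0,1)$, while the candidate $h_s := K(1-s,1)$ pulls back to the top edge from $(1,1)$ to $(0,1)$. Both paths in the contractible square $[0,1]^2$ share the same endpoints, hence are homotopic rel endpoints; pushing this square-homotopy forward by $K$ produces the required homotopy between $\Gamma_{Z_s}$ and $h_s$ in $\DD(\p_+W,\p_-W)$.

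Applying Lemma~\ref{lm:Morse-Serre}(ii) to $Z_s$ together with this homotopy then yields a lift $\wt X_s\in\XX(W,\phi)$ with $\wt X_0 = X_1$, $\wt X_1 = Y_1$, and $\Gamma_{\wt X_s}=h_s$. Setting $\wh X_t := \wt X_{2t-1}$ for $t\in[\tfrac12,1]$, and smoothing at $t=\tfrac12$ by a standard reparametrization, completes the construction. The main obstacle is isolating the correct two-parameter extension $K$; once it is written down, the rest of the argument is dictated by the contractibility of the square, the hypothesis (which ensures $K(\cdot,1)$ lands in the right subset), and the Serre fibration property of holonomies.
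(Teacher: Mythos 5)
Your proof is correct and is essentially the paper's argument. The paper defines the target path directly as $\gamma(t):=\Gamma_{X_1}\circ\Gamma_{X_t}^{-1}\circ\Gamma_{Y_t}$, notes it fixes $\Gamma_{X_1}(A)=\Gamma_{Y_1}(A)$ by the hypothesis, asserts (without spelling out the square-contraction) that it is homotopic rel endpoints to the concatenation of $\Gamma_{X_{1-t}}$ and $\Gamma_{Y_t}$, and then lifts via Lemma~\ref{lm:Morse-Serre}(ii); your $K(u,v)$ and the top-edge path $h_s=K(1-s,1)$ are the mirror-image version of the same construction (swap the roles of $X$ and $Y$, reverse the parameter), with the added benefit of making the two-parameter square homotopy explicit.
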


\begin{proof}
Consider the path $\gamma:[0,1]\to\DD(\p_+W,\p_-W)$ given by the
formula 
$$
  \gamma(t) := \Gamma_{X_1}\circ\Gamma_{X_t}^{-1}\circ\Gamma_{Y_t}.
$$
We have $\gamma(0)=\Gamma_{X_1}$ and $\gamma(1)=\Gamma_{Y_1}$.
The path $\gamma$ is homotopic with fixed endpoints to the
concatenation of the paths $\Gamma_{X_{1-t}}$  and
$\Gamma_{Y_t}$. Hence by Lemma~\ref{lm:Morse-Serre} we
conclude that there exists a path $X'_t\in\XX(W,\phi)$ such that  
$X_0'= X_1$, $X'_1=Y_1$, and $\Gamma_{X'_t}=\gamma(t)$ for all
$t\in[0,1]$. Since 
$$
   \Gamma_{X'_t}(A) = \Gamma_{X_1}\left(\Gamma_{X_t}^{-1}
     \bigl(\Gamma_{Y_t}(A)\bigr)\right) 
   = \Gamma_{X_1}(A) = \Gamma_{Y_1}(A), 
$$
the concatenation $\wh X_t$ of the paths $X_t$ and $X_t'$ has the
required properties. 
\end{proof}

\section{Weinstein preliminaries}\label{sec:holonomy}

In this section we collect some facts about Weinstein structures
needed for the proofs of our main results. Most of the proofs are
omitted and we refer the reader to our book~\cite{CieEli12} for a more
systematic treatment of the subject. 

\subsection{Holonomy of Weinstein cobordisms}

In this subsection we consider Weinstein cobordisms $\fW=(W,\om,X,\phi)$
{\em without critical points} (of the function $\phi$). Its holonomy
along trajectories of $X$ defines a contactomorphism  
$$
   \Gamma_\fW:(\p_+W,\xi_+)\to (\p_-W,\xi_-)
$$
for the contact structures $\xi_\pm$ on $\p_\pm W$ induced by the
Liouville form $\lambda=i_X\om$. 

We say that two Weinstein structures $\fW=(\om,X,\phi)$ and $\wt\fW$
agree {\em up to scaling} on a subset $A\subset W$ if
$\wt\fW|_A=(C\om,X,\phi)$ for a constant $C>0$. Note
that in this case $\wt\fW|_A$ has Liouville form $C\lambda$.  

Let us fix a Weinstein cobordism $\ol\fW=(W,\ol\om,\ol X,\phi)$
without critical points. We denote by $\WW(\ol\fW)$ the space of all
Weinstein structures $\fW=(W,\om,X,\phi)$ with the same function
$\phi$ such that
\begin{itemize}
\item $\fW$ coincides with $\ol\fW$ on $\Op\p_-W$ and up to scaling on
  $\Op\p_+W$;
\item $\fW$ and $\ol\fW$ induce the same
  contact structures on level sets of $\phi$.
\end{itemize}  
Equivalently, $\WW(\ol\fW)$ can be viewed as the space of Liouville
forms $\lambda=f\bar\lambda+g\,d\phi$ with $f\equiv 1$ near $\p_-W$,
$f\equiv C$ near $\p_+W$, and $g\equiv 0$ near $\p W$, where
$\ol\lambda$ denotes the Liouville form of $\ol\fW$. 

Denote by $\DD(\ol\fW)$ the space of contactomorphisms
$(\p_+W,\xi_+)\to(\p_-W,\xi_-)$, where $\xi_\pm$ is the contact
structure induced on $\p_\pm W$ by $\ol\fW$.
Note that $\Gamma_\fW\in \DD(\ol\fW)$ for any $\fW\in \WW(\ol\fW)$.
The following lemma is the analogue of Lemma~\ref{lm:Morse-Serre}
in the context of Weinstein cobordisms.
 
\begin{lemma}\label{lm:Weinstein-Serre}
Let $\ol\fW$ be a Weinstein cobordism without critical points. Then
the map $\WW(\ol\fW) \to \DD(\ol\fW)$ that assigns to $\fW$ its
holonomy $\Gamma_\fW$ is a Serre fibration. In particular:

(i) Given $\fW\in \WW(\ol\fW)$ and an isotopy $h_t\in \DD(\ol\fW)$,
  $t\in[0,1]$, with $h_0=\Gamma_\fW$, there exists a path
  $\fW_t\in\WW(\ol\fW)$ with $\fW_0=\fW$ such that
  $\Gamma_{\fW_t}=h_t$ for all $t\in[0,1]$. 

(ii) Given a path $\fW_t\in\WW(\ol\fW)$, $t\in[0,1]$, and a path
  $h_t\in\DD(\ol\fW)$ which is homotopic to $\Gamma_{\fW_t}$ with
  fixed endpoints, there exists a path $\wt\fW_t\in\WW(\ol\fW)$ with
  $\wt \fW_0=\fW_0$ and $\wt \fW_1= \fW_1$ such that $\Gamma_{\wt
    \fW_t}= h_t$ for all $t\in[0,1]$.
\end{lemma}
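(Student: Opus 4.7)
The approach parallels that of Lemma~\ref{lm:Morse-Serre}, replacing smooth deformations of the vector field with Liouville deformations compatible with $\om$. By standard arguments the Serre fibration property reduces to parameterized versions of parts (i) and (ii), so I will sketch (i); part (ii) follows by an analogous relative construction indicated at the end.

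The flexibility we exploit comes from the explicit parameterization of $\WW(\ol\fW)$ as Liouville forms $\lambda=f\ol\lambda+g\,d\phi$ with $f\equiv 1$ near $\p_-W$, $f\equiv C_\lambda$ near $\p_+W$, and $g\equiv 0$ near $\p W$; both the symplecticity of $d\lambda$ and gradient-likeness of $X_\lambda$ for $\phi$ are open conditions in $(f,g)$. Since $\phi$ has no critical points, the flow of $\ol X$ identifies any interior slab $S=\{c_1\leq\phi\leq c_2\}$, for regular values $c_1<c_2$ chosen well inside the non-collar region, symplectomorphically with a piece of the symplectization $(\Sigma\times[0,T],d(e^s\ol\alpha))$ of a contact cross-section $(\Sigma,\xi)$. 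By a continuous adjustment of $(f,g)$ inside $S$ within $\WW(\ol\fW)$, one arranges $T$ to be as large as desired.

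Given $\fW$ with $\Gamma_\fW=h_0$ and an isotopy $h_t$, factor the holonomy as $\Gamma_\fW=\Gamma_{R_-}\circ\Gamma_S\circ\Gamma_{R_+}$, where $R_\pm$ are the regions between the slab and the collars; since $\fW$ must remain unchanged on $R_\pm$, the required new slab holonomy is $\wt g_t\circ\Gamma_S$, with $\wt g_t:=\Gamma_{R_-}^{-1}\circ h_t\circ h_0^{-1}\circ\Gamma_{R_-}$ a contact isotopy of $\Sigma$ starting at the identity, generated by a time-dependent contact vector field $Z_t$ with contact Hamiltonian $K_t$. Inside a sufficiently stretched slab, one then modifies $\lambda$ by the standard contact-Hamiltonian perturbation so that the new Liouville field becomes $\partial_s+\tfrac{1}{T}\wt Z_t$, with $\wt Z_t$ the canonical lift of $Z_t$ to the symplectization; its holonomy across a slab of length $T$ is the time-$1$ contact flow of $Z_t$, i.e.\ $\wt g_t$. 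Concatenating with the unchanged holonomy on $R_\pm$ yields $\fW_t\in\WW(\ol\fW)$ with $\Gamma_{\fW_t}=h_t$. Part (ii) follows by applying the same construction to the two-parameter family $h_t\circ\Gamma_{\fW_t}^{-1}$, which by the fixed-endpoint homotopy hypothesis is trivial at $t=0$ and $t=1$, ensuring $\wt\fW_0=\fW_0$ and $\wt\fW_1=\fW_1$.

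The main obstacle is maintaining symplecticity of $d\lambda_t$ and gradient-likeness of $X_t$ for $\phi$ throughout the deformation, since the contact vector field $Z_t$ can be arbitrarily large in $C^0$. This is precisely what the large-$T$ stretching buys: the perturbation $\tfrac{1}{T}\wt Z_t$ has $C^1$-norm going to zero as $T\to\infty$ regardless of the size of $Z_t$, so $X_t$ remains a small perturbation of $\partial_s$ and both open conditions persist with uniform quantitative control on the gradient-like constant $\delta$ from \eqref{eq:grad-like}. This is the standard rescaling trick exploiting the conformal symmetry of the symplectization.
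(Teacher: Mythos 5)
The paper itself omits the proof of this lemma (it says "Most of the proofs are omitted and we refer the reader to our book"), so I cannot compare your route to the paper's; I can only check your argument on its own terms. Your overall strategy — reduce to an interior slab identified with a piece of the symplectization, express the required holonomy change via a contact vector field $Z_t$, and realize it by modifying the Liouville form — is the right picture, and part (ii) does reduce to part (i) essentially as you say.

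However, there is a concrete gap in the central step. If $\wt Z_t$ is the Hamiltonian lift of $Z_t$ to the symplectization (the Hamiltonian vector field of $H_t=e^sK_t$), then keeping $\om=\om_\st$ and taking $X_t=\partial_s+\tfrac1T\wt Z_t$ as Liouville field gives $\lambda_t=i_{X_t}\om_\st=e^s\alpha-\tfrac1T d(e^sK_t)$. Restricted to a level set $\{s=c\}$ this is $e^c(\alpha-\tfrac1T dK_t)$, whose kernel is \emph{not} $\xi$ unless $dK_t$ is proportional to $\alpha$. So $\lambda_t$ fails the defining condition of $\WW(\ol\fW)$ that the contact structures on level sets of $\phi$ are unchanged — equivalently, $\lambda_t$ is not of the form $f\ol\lambda+g\,d\phi$. (If instead $\wt Z_t$ means the trivial lift tangent to level sets, then $L_{X_t}(e^s\alpha)=e^s(1+\tfrac{R(K_t)}{T})\alpha\neq e^s\alpha$, so $X_t$ is not a Liouville field for $\om_\st$ at all.) Either way, "modify $\lambda$ so that the Liouville field becomes $\partial_s+\tfrac1T\wt Z_t$" is not achievable within $\WW(\ol\fW)$; you cannot prescribe the Liouville field independently of the constraints on the form. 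A further, smaller imprecision: $\wt g_t$ is the time-$t$ flow of the time-dependent field $Z$, not the time-$1$ flow of $Z_t$, so the slab profile must carry the reparametrized field $t\,Z_{t s/T}$ rather than the constant-in-$s$ field $Z_t$.

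A construction that genuinely stays in $\WW(\ol\fW)$ and avoids the large-$T$ limit altogether: take a level-preserving map $\Psi_t(x,s)=(\theta_{t,s}(x),s)$ where $\theta_{t,s}$ is a family of contactomorphisms of $(\Sigma,\xi)$ with $\theta_{t,s}=\id$ near $s=0$ and $\theta_{t,s}$ independent of $s$ near $s=T$, chosen so that the resulting holonomy equals $h_t$. Then $\lambda_t:=\Psi_t^*(e^s\alpha)=e^{s+h_{t,s}}\alpha+e^s\alpha(\p_s\theta_{t,s})\,ds$ has the required form $f e^s\alpha+g\,ds$, $d\lambda_t$ is nondegenerate since $\Psi_t$ is a diffeomorphism, and $X_t=(\Psi_t^{-1})_*\partial_s$ satisfies $d\phi(X_t)=1$, so gradient-likeness is automatic. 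The only remaining issue is that near $\p_+W$ this gives $f=e^{h_{t,T}}$, a non-constant function, while $\WW(\ol\fW)$ requires $f\equiv C$; this is repaired by appending a purely radial piece $\lambda_t=f_t(x,s)e^s\alpha$ (with $g=0$) on a small collar near $\p_+W$ interpolating $f$ to a large constant $C_t$. A short computation shows that such a radial Liouville form has Liouville field $\frac{f}{f+\p_s f}\,\partial_s$, which is still gradient-like for $\phi$ (provided $f+\p_s f>0$, ensured by taking $C_t$ large) and has trivial holonomy — so it corrects the boundary behaviour without disturbing the holonomy. This yields exact lifts, which the approximate large-$T$ argument does not quite deliver without a further fixed-point correction that you have not addressed.
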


\subsection{Weinstein structures near stable discs}
\label{sec:Weinstein-skel}  
 
The following two lemmas concern the construction of Weinstein
structures near stable discs of Smale cobordisms.  
\begin{lemma}\label{lm:prescribing-W-skel}
Let $\fS=(W,X,\phi)$ be an elementary Smale cobordism and $\om$ a
nondegenerate $2$-form on $W$. Let $D_1,\dots, D_k$ be the stable
discs of critical points of $\phi$, and set $\Delta:=\bigcup_{j=1}^kD_j$.
Suppose that the discs $D_1,\dots, D_k$ are $\om$-isotropic and the
pair $(\om,X)$ is Liouville on $\Op(\p_-W)$. Then for any
neighborhood $U$ of $\p_-W\cup\Delta$ there exists a homotopy
$(\om_t,X_t)$, $t\in[0,1]$, with the following properties:
\begin{enumerate}
\item $X_t$ is a gradient-like vector field for $\phi$ and $\om_t$ is
  a nondegenerate $2$-form on $W$ for all $t\in[0,1]$; 
\item $(\om_0,X_0)=(\om,X)$, and $(\om_t,X_t)=(\om,X)$ outside $U$ and
  on $\Delta\cup\Op(\p_-W)$ for all $t\in[0,1]$;
\item $(\om_1,X_1)$ is a Liouville structure on
  $\Op(\p_-W\cup\Delta)$.  
\end{enumerate}
\end{lemma}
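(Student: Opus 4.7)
The plan is to keep $\om$ fixed throughout and to construct a new vector field $X'$ with $(\om,X')$ Liouville on a neighborhood $N'$ of $\p_-W\cup\Delta$, agreeing with $X$ on $\Delta\cup\Op(\p_-W)$; the homotopy is then obtained by a cutoff convex combination $X_t=(1-t\rho)X+t\rho X'$ with $\om_t\equiv\om$.

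The construction of $X'$ reduces to finding a $1$-form $\lambda'$ on a neighborhood $N$ of $\p_-W\cup\Delta$ satisfying $d\lambda'=\om$, agreeing with the given Liouville form $i_X\om$ on $\Op(\p_-W)$, and such that $\lambda'|_{TW|_\Delta}=(i_X\om)|_{TW|_\Delta}$. Given such $\lambda'$, one defines $X'$ on $N$ by $i_{X'}\om=\lambda'$; then $(\om,X')$ is Liouville on $N$ and $X'|_\Delta=X|_\Delta$ automatically. Note that $(i_X\om)|_{TD_j}=0$ because $X|_{D_j}$ is tangent to $D_j$ and $\om|_{D_j}\equiv 0$, so the prescribed datum along $\Delta$ is consistent with the values of any primitive of $\om$. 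Existence of $\lambda'$ is a relative de Rham problem: each tubular neighborhood $N_j$ of $D_j$ deformation retracts to $D_j$ and is hence contractible (half-disc bundles remain contractible at embryonic critical points), so $\om|_{N_j}$ admits a primitive; the difference from the prescribed datum on $TW|_{D_j}$ is a closed $1$-form along $D_j$, which can be corrected by an exact $1$-form since $H^1(D_j)=0$. The local primitives on the $N_j$ are then glued to the existing Liouville form on $\Op(\p_-W)$ via a partition of unity, using compatibility along the attaching spheres $\p D_j = D_j\cap\p_-W$.

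For the interpolation, observe that $X'|_\Delta = X|_\Delta$ is gradient-like for $\phi$, so by continuity $X'$ remains gradient-like for $\phi$ on a possibly smaller neighborhood $N'\subset N$ of $\p_-W\cup\Delta$. Choose a bump function $\rho:W\to[0,1]$ supported in $U$ with $\rho\equiv 1$ on $N'$, and set $\om_t\equiv\om$, $X_t := (1-t\rho)X + t\rho X'$. Then $\om_t$ is nondegenerate, and at each point $X_t$ is a convex combination of gradient-like vector fields for $\phi$, hence itself gradient-like by Lemma~\ref{lem:cone}(a). Moreover $(\om_t,X_t)\equiv(\om,X)$ outside $U$ (where $\rho=0$) and on $\Op(\p_-W)\cup\Delta$ (where $X=X'$), while $(\om_1,X_1)=(\om,X')$ on $N'$ is Liouville. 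All three requirements of the lemma are thus satisfied.

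The main obstacle is the construction of the primitive $\lambda'$ with the prescribed boundary data simultaneously on $\Op(\p_-W)$ and along $TW|_\Delta$. The two prescriptions are easily seen to be compatible along the attaching spheres $\p D_j\subset\p_-W$, but globalizing from local existence on the contractible pieces $N_j$ and the collar of $\p_-W$ to a coherent primitive on a neighborhood of the stratified space $\p_-W\cup\Delta$ requires a careful patching argument, essentially amounting to a relative Poincar\'e lemma in a neighborhood of the singular stratum $\Delta$ with boundary values fixed on $\Op(\p_-W)$.
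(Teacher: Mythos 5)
Your plan to keep $\om$ fixed throughout cannot work: it rests on a silent assumption that $\om$ is closed near $\Delta$, which is not part of the hypotheses. The lemma only assumes $\om$ is a \emph{nondegenerate} $2$-form on $W$, with $(\om,X)$ Liouville --- and hence $d\om=0$ --- on $\Op(\p_-W)$; away from $\p_-W$ the form $\om$ may have $d\om\neq 0$, even though $\om|_{TD_j}=0$. (Indeed, in the application in Step~1 of the proof of Theorem~\ref{thm:Weinstein-existence}, the input $\eta$ is produced by a homotopy inside the contractible space of $J$-compatible nondegenerate forms, with no closedness guaranteed off $\Op\p_-W$.) Consequently the relative de Rham problem you pose --- finding $\lambda'$ on a neighborhood $N$ of $\p_-W\cup\Delta$ with $d\lambda'=\om$ --- is simply unsolvable if $d\om\neq 0$ somewhere on $N$, and no choice of $X'$ can then make $(\om,X')$ a Liouville structure on $\Op(\p_-W\cup\Delta)$, since a Liouville structure requires the $2$-form to be symplectic, hence closed.

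Condition~(ii) of the lemma explicitly allows $\om_t$ to vary on $U\setminus\bigl(\Delta\cup\Op(\p_-W)\bigr)$, and this freedom is essential. The intended argument therefore must deform $\om$ as well as $X$: roughly, one uses the isotropic neighborhood theorem for nondegenerate (not necessarily closed) $2$-forms to identify a tubular neighborhood of each stable disc $D_j$, by a diffeomorphism fixing $D_j$ and matching $\om$ along $TW|_{D_j}$, with a standard model carrying a closed form and Liouville field, and then interpolates both $\om$ and $X$ to the model structure, cutting off outside $U$ and keeping everything fixed on $\Delta\cup\Op(\p_-W)$. Your interpolation step and the use of Lemma~\ref{lem:cone}(a) for gradient-likeness are fine as far as they go, but they do not address the central difficulty, which is making the $2$-form closed near $\Delta$ without disturbing it on $\Delta$, on $\Op(\p_-W)$, or outside $U$.
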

 
Lemma~\ref{lm:prescribing-W-skel} has the following version for
homotopies. 

\begin{lemma}\label{lm:prescribing-W-skel-param}
Let $\fS_t=(W,X_t,\phi_t)$, $t\in[0,1]$, be an elementary Smale
homotopy and $\om_t$, $t\in[0,1]$, a family of 
nondegenerate $2$-forms on $W$. Let $\Delta_t$ be the 
union of the stable (half-)discs of zeroes of $X_t$
and set  
$$
   \Delta := \bigcup_{t\in[0,1]}\{t\}\times\Delta_t 
   \subset[0,1]\times W. 
$$
Suppose that $\Delta_t$ is $\om_t$-isotropic for all $t\in[0,1]$, the
pair $(\om_t,X_t)$ is Liouville on $\Op(\p_-W)$ for all $t\in[0,1]$,
and $(\om_0,X_0)$ and $(\om_1,X_1)$ are Liouville on all of $W$. 
Then for any open neighborhood $V=\bigcup_{t\in[0,1]}\{t\}\times V_t$ of 
$\Delta$ there exists an open neighborhood
$U=\bigcup_{t\in[0,1]}\{t\}\times U_t\subset V$ of $\Delta$ and a  
$2$-parameter family $(\om_t^s,X_t^s)$, $s,t\in[0,1]$, with the
following properties: 
\begin{enumerate}
\item $X_t^s$ is a gradient-like vector field for $\phi_t$ and
  $\om_t^s$ is a nondegenerate $2$-form on $W$ for all $s,t\in[0,1]$;  
\item $(\om^0_t,X^0_t)=(\om_t,X_t)$ for all $t\in[0,1]$,
  $(\om^s_0,X^s_0)=(\om_0,X_0)$ and $(\om^s_1,X^s_1)=(\om_1,X_1)$ for
  all $s\in[0,1]$, and
  $(\om^s_t,X^s_t)=(\om_t,X_t)$ outside $V_t$ and
  on $\Delta_t\cup\Op(\p_-W)$ for all $s,t\in[0,1]$;
\item $(\om^1_t,X^1_t)$ is a Liouville structure on
  $U_t$ for all $t\in[0,1]$.  
\end{enumerate}
\end{lemma}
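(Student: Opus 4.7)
The lemma is the parametric (in $t$) analogue of Lemma~\ref{lm:prescribing-W-skel}, relative to the endpoints $t=0,1$, where the hypothesis that $(\om_t, X_t)$ is globally Liouville forces the deformation to be trivial. My plan is to revisit the proof of the non-parametric version, which rests on two steps: (a) an isotropic neighborhood theorem producing a standard Liouville structure near $\p_- W \cup \Delta$; (b) a Moser-type interpolation from the given structure to this model, extended via cut-off. I will carry out both steps smoothly in $t$, making them trivial at $t=0,1$ and on $\Delta_t \cup \Op(\p_- W)$.

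For step (a) in the parametric setting, one applies a parametric Weinstein-type neighborhood theorem for families of isotropic (half-)discs in a family of symplectic manifolds. This produces, for each $t$, a Liouville model $(\om_t^{\mathrm{mod}}, X_t^{\mathrm{mod}})$ on an open neighborhood $V_t' \subset V_t$ of $\p_- W \cup \Delta_t$ with $X_t^{\mathrm{mod}}$ gradient-like for $\phi_t$, agreeing with $(\om_t, X_t)$ on $\Op(\p_- W)$, and depending smoothly on $t$. At $t=0,1$ one takes $(\om_t^{\mathrm{mod}}, X_t^{\mathrm{mod}}) := (\om_t, X_t)$, which is legal since the given structure is already Liouville. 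For step (b), both $\om_t$ and $\om_t^{\mathrm{mod}}$ are nondegenerate and agree on $\Delta_t \cup \Op(\p_- W)$, so a parametric relative Moser argument produces a smooth $2$-parameter family of diffeomorphisms $\psi_t^s$, equal to the identity at $s=0$, at $t=0,1$, and on $\Delta_t \cup \Op(\p_- W)$, with $(\psi_t^s)^* \om_t^{\mathrm{mod}}$ interpolating from $\om_t$ to $\om_t^{\mathrm{mod}}$. After extending via a cut-off in space and a further cut-off in $s$, and using Lemma~\ref{lem:cone} to maintain the gradient-like property of the interpolated vector fields, one obtains the desired $2$-parameter family $(\om_t^s, X_t^s)$ on all of $W$.

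The main obstacle is the parametric isotropic neighborhood theorem near the birth-death parameter of a Type IIb or IId elementary homotopy: there, a half-disc appears or a pair of discs merges, and the family of models $(\om_t^{\mathrm{mod}}, X_t^{\mathrm{mod}})$ must be constructed compatibly across the singular parameter. The standard approach is to use Darboux coordinates for the embryonic singularity of $\phi_t$ (as in the birth-death family given in the definition of a generalized Morse function) together with the standard Liouville structure on $\mathbb{C}^n$ near an isotropic half-disc; these fit together to give a smooth family of models. A smaller neighborhood $U_t \subset V_t'$ is then chosen on which $(\om_t^1, X_t^1)$ is genuinely Liouville, completing the construction.
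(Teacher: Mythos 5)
The paper itself does not prove this lemma (nor its non-parametric counterpart, Lemma~\ref{lm:prescribing-W-skel}); Section~\ref{sec:holonomy} explicitly defers the proofs to~\cite{CieEli12}. Your overall architecture --- a parametric isotropic neighborhood theorem producing a family of Liouville models near $\p_-W\cup\Delta_t$, with the models set equal to the given (already Liouville) structures at $t=0,1$, embryonic Darboux coordinates to handle the birth-death parameter, and a cutoff to localize the deformation --- matches the intended strategy.

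The genuine gap is in your step~(b). You propose to compare $\om_t$ and $\om_t^{\mathrm{mod}}$ by a ``parametric relative Moser argument'' yielding diffeomorphisms $\psi_t^s$ with $(\psi_t^s)^*\om_t^{\mathrm{mod}}$ interpolating from $\om_t$ to $\om_t^{\mathrm{mod}}$. This cannot work: Moser's method compares \emph{closed} nondegenerate forms, and any pullback $(\psi_t^s)^*\om_t^{\mathrm{mod}}$ of the closed form $\om_t^{\mathrm{mod}}$ is again closed, hence it can never equal $\om_t$, which in this lemma is merely a nondegenerate $2$-form with no closedness assumed away from $\Op\p_-W$. (You have also reversed the endpoints: if $\psi_t^0=\id$, then $(\psi_t^0)^*\om_t^{\mathrm{mod}}=\om_t^{\mathrm{mod}}$, not $\om_t$.) The interpolation that actually applies is the convex combination $\om_t^s := (1-s\chi_t)\,\om_t + s\chi_t\,\om_t^{\mathrm{mod}}$ and $X_t^s := (1-s\chi_t)\,X_t + s\chi_t\,X_t^{\mathrm{mod}}$, where $\chi_t$ is a cutoff supported in $V_t$ and equal to~$1$ on the smaller neighborhood $U_t$. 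Since $\om_t$ and $\om_t^{\mathrm{mod}}$ agree along $\Delta_t\cup\Op\p_-W$, nondegeneracy of $\om_t^s$ persists after shrinking $U$ and $V$ by a compactness argument in $(s,t)$, and the gradient-like property of $X_t^s$ follows from Lemma~\ref{lem:cone} (which is exactly a statement about convex combinations of gradient-like fields). So the interpolation step is a convexity argument, not a diffeomorphism-based Moser argument; Moser-type arguments do enter internally in the proof of the isotropic neighborhood theorem that produces the model, but they cannot replace the convex interpolation from the possibly non-closed $\om_t$ to the model.
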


\subsection{Weinstein homotopies}\label{ss:W-homotopies}

A smooth family $(W,\om_t,X_t,\phi_t)$, $t\in[0,1]$, of Weinstein
cobordism structures is called {\em Weinstein
homotopy} if the family $(W,X_t,\phi_t)$ is a Smale homotopy in the
sense of Section~\ref{sec:Smale-hom}. Recall that this means in
particular that the functions $\phi_t$ have $\p_\pm W$ as regular level
sets, and they are Morse except for finitely many $t\in(0,1)$ at which
a birth-death type critical point occurs. 

The definition of a Weinstein homotopy on a {\em manifold} $V$
requires more care. 
Consider first a smooth family $\phi_t:V\to\R$, $t\in[0,1]$, of
exhausting generalized Morse functions such that there exists a finite
set $A\subset(0,1)$ satisfying the conditions stated at the beginning
of Section~\ref{sec:Smale-hom}. 
We call $\phi_t$ a {\em simple Morse homotopy} if
there exists a sequence of smooth functions $c_1<c_2<\cdots$ on the
interval $[0,1]$ such that for each $t\in[0,1]$, $c_i(t)$ is a regular
value of the function $\phi_t$ and $\bigcup_k\{\phi_t\leq
c_k(t)\}=V$. A {\em Morse homotopy} 
is a composition of finitely many simple Morse homotopies. 
Then a {\em Weinstein homotopy} on the manifold $V$ is a family of
Weinstein manifold structures $(V,\omega_t,X_t,\phi_t)$ 
such that the associated functions $\phi_t$ form a Morse homotopy.

The main motivation for this definition of a Weinstein homotopy is the 
following result from~\cite{EliGro91} (see also~\cite{CieEli12}). 

\begin{prop}\label{prop:convex-stability}
If two Weinstein manifolds $\fW_0=(V,\omega_0,X_0,\phi_0)$ and
$\fW_1=(V,\omega_1,X_1,\phi_1)$ are Weinstein homotopic, then they are 
symplectomorphic. More precisely, there exists a diffeotopy $h_t:V\to
V$ with $h_0=\id$ such that $h_1^*\lambda_1-\lambda_0$ is exact, where
$\lambda_i=i_{X_i}\om_i$ are the Liouville forms. 
If $\fW_0$ and $\fW_1$ are the completions of homotopic Weinstein
domains, then we can achieve $h_1^*\lambda_1-\lambda_0=0$ outside a
compact set. 
\end{prop}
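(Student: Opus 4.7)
The plan is to reduce to an elementary Weinstein homotopy on a compact cobordism, apply a Moser-type argument localized near the stable skeleton, and extend outward by the Liouville flow, keeping track of primitives throughout.

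\emph{Reduction.} A Weinstein homotopy on a manifold $V$ is a composition of simple Morse homotopies exhausted by Weinstein subcobordisms $W_k(t)=\{\phi_t\leq c_k(t)\}$ with regular upper boundary. Each such compact homotopy is partitioned by Lemma~\ref{lm:admiss-homotopy} into finitely many elementary Smale homotopies of Type I, IIb or IId. Since concatenating diffeotopies $h_t$ with $h_t^*\lambda_t-\lambda_0$ exact preserves exactness, it suffices to construct the required $h_t$ on each elementary piece and paste.

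\emph{Type I: no birth or death.} The stable skeleton $\Delta_t$ varies smoothly in $t$. Using Lemma~\ref{lm:prescribing-W-skel-param} I may assume $(\om_t,X_t)$ is Liouville in an open neighborhood of $\Delta_t$. Parametric isotopy extension then provides a diffeotopy $g_t$ of $W$, identity on $\Op\p_-W$, carrying $\Delta_0$ onto $\Delta_t$; replacing the homotopy by its $g_t$-pullback reduces to the case of stationary skeleton. A standard Moser argument near $\Delta_0$ (using the uniqueness of a Liouville germ extending a given isotropic skeleton) yields an isotopy $k_t$ satisfying $k_t^*\lambda_t=\lambda_0+d\beta_t$ on $\Op\Delta_0$. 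Because the complement of $\Op\Delta_0$ in $W$ is swept out by the positive Liouville flow of $X_0$ starting from $\p_-W$, I extend $k_t$ to all of $W$ by conjugation with this flow, allowing the rescaling on $\Op\p_+W$ permitted in the statement.

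\emph{Type IIb/IId: birth-death.} A small Darboux ball $B$ about the embryonic point contains the birth-death family in the normal form; outside $B$ the homotopy is Type I and is treated as above. Inside $B$, an explicit local Moser trick on the standard Weinstein model for $\phi_t(x)=\phi_t(p)\pm tx_1+x_1^3-\sum_{i=2}^k x_i^2+\sum_{j=k+1}^m x_j^2$ furnishes a compactly supported Liouville isotopy with a primitive, which is glued to the Type I diffeotopy across $\p B$.

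\emph{Assembly and the main difficulty.} Concatenating across the admissible partition of $[0,1]$ and across the exhaustion $\{W_k\}$ of $V$ yields the global diffeotopy $h_t$. In the completed-domain case, arranging all Moser vector fields to vanish in the cylindrical end (possible because the two Weinstein structures agree with the symplectization there up to the scaling built into the construction) gives $h_1^*\lambda_1-\lambda_0=0$ off a compact set. The principal technical obstacle is not in the individual Moser steps but in the bookkeeping of primitives $\beta_t$ across the successive elementary pieces: one must choose the Liouville-flow extensions so that across each partition hypersurface both the isotopy and a chosen primitive of $h_t^*\lambda_t-\lambda_0$ glue smoothly, and one must verify that the rescalings introduced on $\Op\p_+W_k(t)$ at each level can be absorbed consistently as $k\to\infty$. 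This compatibility, rather than any single Moser argument, is the heart of the proof.
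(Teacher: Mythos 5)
The paper states Proposition~\ref{prop:convex-stability} without proof, citing \cite{EliGro91} and \cite{CieEli12}. The argument in those references is a \emph{global} Moser argument: one defines a time-dependent vector field $Z_t$ on all of $V$ by $i_{Z_t}\om_t = -\dot\lambda_t + dg_t$; its flow $h_t$ then satisfies $\ddt(h_t^*\lambda_t) = h_t^*\,d\bigl(i_{Z_t}\lambda_t + g_t\bigr)$, so $h_t^*\lambda_t-\lambda_0$ is exact by construction, in one stroke and with no per-cobordism bookkeeping of primitives. The only delicate point is that $Z_t$ must be integrable up to time $1$, and this is exactly where the exhaustion functions $c_k(t)$ in the definition of a Weinstein homotopy (together with a judicious choice of the $g_t$, or added multiples of the Liouville fields $X_t$) enter. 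The stable skeleton plays no special role. Your proposal --- localize an isomorphism near $\Delta_t$ via a Moser argument there and push it out by conjugating with the Liouville flow, piece by piece over an admissible partition and over the exhaustion --- is therefore a genuinely different route.

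There is, however, a concrete gap in it, and it is not just the ``bookkeeping'' you flag at the end. You extend the skeleton-local isotopy ``allowing the rescaling on $\Op\p_+W$ permitted in the statement''. The proposition permits no such rescaling: it requires $h_1^*\lambda_1 - \lambda_0$ to be exact, and since a Liouville form $\lambda_0$ is never closed, an identity $h_1^*\lambda_1 = c\lambda_0 + d\beta$ with $c\neq 1$ gives $h_1^*\lambda_1-\lambda_0 = (c-1)\lambda_0 + d\beta$, which is not exact. The ``fixed up to scaling near $\p_+W$'' clause appears in Lemma~\ref{lm:W-interpolation} and its relatives because those are intermediate devices whose scaling is cancelled at the next step. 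A \emph{single} global scaling could indeed be removed a posteriori by composing with a Liouville-flow diffeomorphism of $V$ (using completeness of $X_0$); but your construction produces a separate multiplicative constant on each level $\p_+W_k(t)$ of the exhaustion, governed by a holonomy comparison between $X_0$ and $X_t$ that you do not control, and you give no reason for the infinite product of these constants to converge, let alone be absorbable. This is precisely the difficulty the direct Moser argument avoids. Two further soft spots: the ``uniqueness of a Liouville germ extending a given isotropic skeleton'' invoked in the Type~I step goes beyond what Lemmas~\ref{lm:prescribing-W-skel}--\ref{lm:prescribing-W-skel-param} supply (they construct such germs but do not classify them up to isotopy matching primitives), and the birth-death case is asserted rather than argued.
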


\begin{remark}
Without the hypothesis on the functions $c_k(t)$ in the definition of
a Weinstein homotopy, Proposition~\ref{prop:convex-stability} would
fail. Indeed, it is not hard to see that without this hypothesis 
all Weinstein structures on $\R^{2n}$ would be ``homotopic''. 
But according to McLean~\cite{McL09}, there are infinitely many
Weinstein structures on $\R^{2n}$ which are pairwise
non-symplectomorphic. 
\end{remark}

\begin{remark}\label{rem:Morse-homotopy}
It is not entirely obvious but true (see~\cite{CieEli12}) that any two
exhausting Morse functions on the same manifold can be connected by a
Morse homotopy. 
\end{remark}

The notion of Weinstein (or Stein) homotopy can be formulated in more
topological terms. Let us denote by $\Weinstein$ the space of
Weinstein structures on a fixed manifold $V$.
For any
$\fW_0\in\Weinstein$, $\eps>0$, $A\subset V$ compact, $k\in\N$, and
any unbounded sequence $c_1<c_2<\cdots$, we define the set
\begin{multline*}
   \UU(\fW_0,\eps,A,k,c) := \{ \fW=(\om,X,\phi)\in\Weinstein\mid\;
   \|\fW-\fW_0\|_{C^k(A)}<\eps,\\ 
   \ c_i \text{ regular values of }\phi\}. 
\end{multline*}
It is easy to see that these sets are the basis of a topology on
$\Weinstein$, 
and a smooth family of Weinstein structures satisfying the conditions
at the beginning of Section~\ref{sec:Smale-hom} 
defines a continuous path $[0,1]\to\Weinstein$ with respect to this
topology if and only if (possibly after target reparametrization of
the functions) it is a Weinstein homotopy according to the
definition above. A topology on the space $\Morse$ of exhausting
generalized Morse functions can be defined similarly. 

\subsection{Creation and cancellation of critical points of
  Weinstein structures}\label{sec:W-mod}

A key ingredient in Smale's proof of the $h$-cobordism theorem is the
creation and cancellation of pairs of critical points of a Morse
function. The following two propositions describe analogues of these
operations for Weinstein structures. 

\begin{prop}[Creation of critical points]\label{prop:creation}
Let $(W,\om,X,\phi)$ be a Weinstein cobordism without critical points.
Then given any point $p\in\Int W$ and any integer $k\in\{1,\dots,
n\}$, there exists a Weinstein homotopy $(\om,X_t,\phi_t)$ with the
following properties: 
\begin{enumerate}
\item $(X_0,\phi_0)=(X,\phi)$ and $(X_t,\phi_t)=(X,\phi)$ outside a
  neighborhood of $p$;  
\item $\phi_t$ is a creation family such that $\phi_1$ has a pair of
critical points of index $k$ and $k-1$. 
\end{enumerate}
\end{prop}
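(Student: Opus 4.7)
The proof is entirely local: the modification is supported in an arbitrarily small neighborhood of $p$, and $(\om,X,\phi)$ is unchanged outside it. My plan is to reduce to a standard Liouville flow box around $p$ and then insert into it an explicit birth model for a canceling pair of critical points.

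\emph{Step~1 (Normal form).} Since $\phi$ has no critical points, $X$ is nonvanishing near $p$. Flowing along $X$ from the hypersurface $\Sigma_0 := \phi^{-1}(\phi(p))$ and using a contact Darboux chart on $(\Sigma_0,\lambda|_{\Sigma_0})$ centered at $p$, one obtains coordinates $(r,z,u_1,v_1,\dots,u_{n-1},v_{n-1})$ on a neighborhood $U \ni p$ with
$$
   \om = d\bigl(e^r(dz - \textstyle\sum_i v_i\,du_i)\bigr), \qquad X = \p_r, \qquad \Sigma_0 = \{r = 0\},
$$
and $\p_r\phi > 0$ throughout $U$.

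\emph{Step~2 (Local birth model).} On a small ball $B \subset \R^{2n}$ equipped with the standard symplectic form $\om_{\st}$, I would construct an explicit family $(X_\tau,\phi_\tau)_{\tau\in[-1,1]}$, supported in $B$ and equal to $(\p_r,\phi)$ outside $B$, that realizes a standard Morse birth in the $r$-direction: $\phi_\tau$ has no critical points for $\tau < 0$, a single embryonic critical point at $\tau = 0$, and two nondegenerate critical points of indices $k$ and $k-1$ joined by one $X_\tau$-trajectory for $\tau > 0$. The deformation is written as $\lambda_\tau = \lambda_0 + d h_\tau$ and $\phi_\tau = \phi + H_\tau$ with $h_\tau, H_\tau$ compactly supported in $B$, so that $\om_{\st}$ stays fixed. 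The isotropic $k$-disc of the index-$k$ zero can be placed in the span of $\p_r$ and $k-1$ of the available Legendrian directions $\p_{u_1},\dots,\p_{u_{n-1}}$; this uses $k-1 \leq n-1$ and is therefore feasible precisely when $k \leq n$.

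\emph{Step~3 (Transplantation and verification).} Rescaling and transplanting the model of Step~2 through the chart of Step~1 yields the required family $(\om,X_t,\phi_t)$ on $W$. Because the perturbation has the form $\lambda \to \lambda + d h_t$ with $h_t$ compactly supported near $p$, the form $\om = d\lambda_t$ is automatically preserved, and properties (1) and (2) of the proposition follow from the corresponding properties of the local model. The main obstacle is the existence assertion in Step~2: constructing $h_\tau$ and $H_\tau$ so that \emph{simultaneously} $X_\tau$ is Liouville for the fixed $\om_{\st}$, is gradient-like for $\phi_\tau$, equals $\p_r$ outside $B$, and has $\om_{\st}$-isotropic stable manifold at the new index-$k$ zero throughout the family. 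This is the Weinstein refinement of the classical Morse birth family $r^3 - \tau r - \sum_{i=2}^k x_i^2 + \sum_{j>k} x_j^2$; it is produced by writing the two canceling Weinstein handles in explicit Darboux coordinates along the $r$-axis and interpolating in $\tau$, and is carried out in detail in \cite{CieEli12}.
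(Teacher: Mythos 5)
The paper omits the proof of this proposition and refers to \cite{CieEli12} (see the opening paragraph of Section~\ref{sec:holonomy}), so there is no in-paper proof to compare against. Your strategy is the correct one, and it is essentially what the book does: (i) reduce to the Liouville flow-box normal form $\bigl(d(e^r\alpha_{\st}),\p_r\bigr)$ near $p$ via the flow of $X$ and a contact Darboux chart on the level set through $p$; (ii) exhibit a compactly supported Weinstein birth family in this model with $\om$ held fixed, by perturbing only $\lambda_\tau=\lambda_0+dh_\tau$ and $\phi_\tau=\phi+H_\tau$; (iii) transplant. Two points you highlight are the correct key observations: holding $\om$ fixed by perturbing $\lambda$ only by exact compactly supported forms is exactly what matches the phrasing $(\om,X_t,\phi_t)$ in the statement, and placing the new stable $k$-disc along $\p_r$ plus $k-1$ Legendrian directions is precisely how the constraint $k\le n$ enters.

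The gap, which you flag yourself, is that Step~2 is not actually carried out: the construction of $(h_\tau,H_\tau)$ such that $X_\tau$ is Liouville for $\om_{\st}$, gradient-like for $\phi_\tau$ in the sense of~\eqref{eq:grad-like} uniformly across the family (including at and near the embryonic zero), equal to $\p_r$ near $\p B$, and has isotropic stable disc for the new index-$k$ zero, is the entire content of the proposition. Citing \cite{CieEli12} for it mirrors the paper's own treatment, but it leaves your proposal as a correct outline plus a citation rather than a proof. To close the gap you would need to write the local model explicitly and verify in particular the gradient-like inequality across the cutoff region and across the birth parameter; that verification is where the nontrivial work lies.
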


\begin{prop}[Cancellation of critical points]\label{prop:cancellation}
Let $(W,\om,X,\phi)$ be a Weinstein cobordism with exactly two
critical points $p,q$ of index $k$ and $k-1$, respectively, which are
connected by a unique gradient trajectory along
which the stable and unstable manifolds intersect transversely. 
Let $\Delta$ be the skeleton of $(W,X)$, i.e., the closure of the stable
manifold of the critical point $p$. Then there exists a
Weinstein homotopy $(\om,X_t,\phi_t)$ with the following properties:
\begin{enumerate}
\item $(X_0,\phi_0)=(X,\phi)$, and $(X_t,\phi_t)=(X,\phi)$ near $\p W$
and outside a neighborhood of $\Delta$; 
\item $\phi_t$ has no critical points outside $\Delta$;
\item $\phi_t$ is a cancellation family such that $\phi_1$ has no
critical points. 
\end{enumerate}
\end{prop}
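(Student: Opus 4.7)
The plan is to combine Smale's classical Morse-theoretic cancellation with a local Weinstein normal form near the skeleton $\Delta$. The argument is essentially the time-reverse of the construction for Proposition~\ref{prop:creation}, and the two results are naturally proved together.

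First, at the smooth level, the transverse intersection of the stable and unstable manifolds of $p$ and $q$ along the unique connecting trajectory permits Smale's cancellation lemma: there is a Smale homotopy $(W,\tilde X_t,\phi_t)$ supported in an arbitrarily small neighborhood $U$ of $\Delta$, equal to $(X,\phi)$ near $\partial W$ and outside $U$, whose functions $\phi_t$ form a cancellation family with $\phi_1$ having no critical points and with critical points of $\phi_t$ confined to $\Delta$. This is the template we will realize as a Weinstein homotopy with the fixed symplectic form $\omega$.

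Second, I would pass to a standard local Weinstein model on a neighborhood of $\Delta$. The skeleton is the union $\Delta = D_p^- \cup D_q^-$ of the stable discs (of dimensions $k$ and $k-1$), both $\omega$-isotropic, glued along the unique gradient trajectory from $p$ to $q$. By the Weinstein neighborhood theorem for isotropic stratified subsets one identifies $(\Op(\Delta),\omega,X,\phi)$ with an explicit local model built on standard Darboux coordinates, in which both critical points take canonical Morse normal form and the Liouville vector field is canonical. Inside this model one performs the cancellation explicitly with $\omega$ fixed: one writes a family of Liouville primitives $\lambda_t = \lambda_0 + d h_t$, where $h_t$ is a carefully chosen time-dependent function supported in $U$, together with Lyapunov functions $\phi_t$ for the associated vector fields $X_t$ defined by $i_{X_t}\omega = \lambda_t$. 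Here Lemma~\ref{lem:prescribe-profile} lets us prescribe the desired Cerf profile, and Lemma~\ref{lm:Morse-profiles} identifies this family with the smooth template $(\tilde X_t,\phi_t)$ up to a diffeotopy. The resulting triple $(\omega,X_t,\phi_t)$ is then a Weinstein homotopy on $U$, extending trivially by $(X,\phi)$ outside a slightly smaller neighborhood of $\Delta$; the matching annulus is handled by Lemma~\ref{lem:cone}, which permits convex combinations of Liouville fields sharing a common Lyapunov function.

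The main obstacle is the explicit local step. The Liouville constraint $d\lambda_t=\omega$ with $\omega$ fixed restricts deformations of $X_t$ to those coming from exact corrections $d h_t$, while $X_t$ must simultaneously be gradient-like for $\phi_t$. At the birth--death moment $t_0$ when $d\phi_{t_0}$ vanishes to higher order at the merging critical point, the inequality $d\phi_t(X_t) \geq \delta(|X_t|^2+|d\phi_t|^2)$ becomes delicate, and establishing it requires the local model to be compatible with the standard birth--death normal form and the correction $h_t$ to be chosen so that both sides vanish to compatible orders. Once this Liouville and gradient-like family is produced in the model, the extension outside $U$ and the verification of properties (i)--(iii) are straightforward.
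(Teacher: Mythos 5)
The paper itself does not contain a proof of this proposition; Section~\ref{sec:W-mod} states Propositions~\ref{prop:creation} and~\ref{prop:cancellation} without argument and delegates to the book~\cite{CieEli12}. So I can only assess your proposal on its own terms. Your high-level strategy---reduce to a neighborhood of the skeleton $\Delta$, put the Weinstein structure into a normal form there, perform the cancellation explicitly keeping $\om$ fixed by varying only the exact part $dh_t$ of the Liouville form, and cut off outside---is the natural one and matches the spirit of the machinery developed in Section~\ref{sec:holonomy} (the commented-out ``surrounding the skeleton'' discussion in the source is clearly meant to play exactly this role). So the blueprint is sound.

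The difficulty is that everything nontrivial is compressed into the sentence acknowledging ``the main obstacle,'' and the obstacle is the proof. Concretely: (1) you invoke a ``Weinstein neighborhood theorem for isotropic stratified subsets'' as if it were a standard tool, but $\Delta$ is not a submanifold---it is the union of a $k$-disc and a $(k-1)$-disc meeting along a flow line, with the smaller disc lying in the frontier of the larger one---and standardizing a Weinstein structure near such a stratified set is precisely the kind of Moser-type argument that needs to be built; this is roughly what Lemma~\ref{lm:prescribing-W-skel} does for an elementary cobordism, but $(W,X,\phi)$ here is not elementary. (2) Your appeal to Lemma~\ref{lem:prescribe-profile} is misplaced: that lemma produces Lyapunov functions for a \emph{fixed} gradient-like field, whereas in your ansatz both $X_t$ and $\phi_t$ are changing, and the whole point is to produce $\phi_t$ that is Lyapunov for the \emph{constrained} field $X_t$ determined by $i_{X_t}\om=\lambda_0+dh_t$. (3) Lemma~\ref{lm:Morse-profiles} is likewise premature---it compares two already-constructed Smale homotopies with equal profile, so you cannot use it to ``identify'' a family that has not yet been built. (4) The gradient-like inequality at the birth--death moment $t_0$, which you correctly flag, is not a formality: near the embryonic zero $|X_{t_0}|^2$ has a cubic degeneracy while $d\phi_{t_0}(X_{t_0})$ must dominate $|X_{t_0}|^2+|d\phi_{t_0}|^2$, and showing that the exact correction $dh_t$ can be chosen so this survives (rather than forcing $\delta\to 0$) is the crux of the local computation. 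As written, the proposal correctly identifies the shape of the argument but does not supply the local normal form or the explicit family $h_t$, so it does not constitute a proof.
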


\section{Existence and deformations of flexible Weinstein
  structures}\label{sec:ex} 

In this section we prove Theorem~\ref{thm:combined} from the
Introduction and some other results about flexible Weinstein manifolds
and cobordisms. For simplicity, we assume that individual functions
are Morse and $1$-parameter families are Morse homotopies in the sense
of Section~\ref{sec:Smale-hom}. The more general case of arbitrary
($1$-parameter families of) generalized Morse functions is treated
similarly. 

\subsection{Existence of Weinstein structures}\label{sec:MW-existence} 
 
The following two theorems imply Theorem~\ref{thm:combined}(a) from
the Introduction. 

\begin{thm}[Weinstein existence theorem]
\label{thm:Weinstein-existence}
Let $(W,\phi)$ be a $2n$-dimensional Morse cobordism such that $\phi$
has no critical points of index $>n$. Let $\eta$ be a nondegenerate
(not necessarily closed) $2$-form on $W$ and $Y$ a vector field near
$\p_-W$ such that $(\eta,Y,\phi)$ defines a Weinstein structure on
$\Op\p_-W$. Suppose that
either $n>2$, or $n=2$ and the contact structure induced by the
Liouville form $\lambda=i_Y\eta$ on $\p_-W$ is overtwisted.
Then there exists a Weinstein structure $(\om,X,\phi)$ on $W$ with the 
following properties: 
\begin{enumerate}
\item $(\om,X)=(\eta,Y)$ on $\Op \p_-W$;  
\item the nondegenerate $2$-forms $\om$ and $\eta$ on $W$ are homotopic
  {\rm rel} $\Op\p_-W$. 
\end{enumerate}
Moreover, we can arrange that $(\om,X,\phi)$ is flexible. 
\end{thm}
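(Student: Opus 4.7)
The plan is to prove Theorem~\ref{thm:Weinstein-existence} by induction on an admissible partition of the Morse cobordism into elementary pieces, extending the Weinstein structure across one elementary cobordism at a time. Using Lemma~\ref{lm:existence-partition-M}, choose regular values $c_0 = \min\phi < c_1 < \cdots < c_N = \max\phi$ of $\phi$ so that each $W_k = \{c_{k-1}\leq \phi \leq c_k\}$ is elementary. Starting from the given Weinstein germ $(\eta, Y, \phi)$ on $\Op \p_-W$, we extend it slightly to obtain a Weinstein structure on $\{\phi \leq c_0 + \epsilon\}$ (using the flow of $Y$), and then proceed by induction on $k$, constructing at step $k$ a Weinstein structure on $\{\phi \leq c_k\}$ whose symplectic form is homotopic \emph{rel} $\Op \p_-W$ to $\eta$ restricted to that sublevel set, while also ensuring that whenever $W_k$ contains an index $n$ critical point the corresponding attaching Legendrian sphere is loose.

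The inductive step proceeds as follows. Assume the Weinstein structure has been built on $\{\phi \leq c_{k-1}\}$, inducing a contact structure $\xi_{k-1}$ on the level set $\Sigma_{k-1}$. For each critical point $p$ of $\phi$ in $W_k$, the stable disc $D_p^-$ is transversely attached to $\{\phi \leq c_{k-1}\}$ along its boundary sphere $S_p^- \subset \Sigma_{k-1}$. Choose a compatible almost complex structure $J$ on $W_k$ taming $\eta|_{W_k}$ so that $\xi_{k-1}$ is the maximal $J$-invariant distribution on $\Sigma_{k-1}$. Apply Theorem~\ref{thm:real-handle} to each $D_p^-$ (with $k = \ind p \leq n$) to isotope it through transversely attached discs to one that is totally real, $J$-orthogonally attached, and whose boundary is a \emph{loose} Legendrian when $\ind p = n > 2$, respectively with complement of $S_p^-$ in $\Sigma_{k-1}$ overtwisted when $n = 2$. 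This guarantees that the stable discs become $\eta$-isotropic with isotropic (Legendrian, when critical) attaching spheres.

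Once the stable discs $D_p^-$ of all critical points in $W_k$ are $\eta$-isotropic, apply Lemma~\ref{lm:prescribing-W-skel} (with the nondegenerate $2$-form $\eta$ and any gradient-like $X$ for $\phi$ extending the Liouville field from $\Op \Sigma_{k-1}$) to deform $(\eta, X)$, keeping the skeleton $\Sigma_{k-1} \cup \bigcup_p D_p^-$ and the boundary germ fixed and staying within nondegenerate $2$-forms, to a pair $(\om_1, X_1)$ that is Liouville on a neighborhood of the skeleton. Then extend the Liouville structure from this neighborhood across the remainder of $W_k$ by pushing forward the Liouville form along the flow of $X_1$; since away from the stable discs $X_1$ has no zeros and flows outward to $\Sigma_k$, this yields an honest Liouville form on all of $W_k$, hence a Weinstein structure whose attaching spheres are loose, so that $(\om, X, \phi)|_{W_k}$ is flexible. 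The deformation of Lemma~\ref{lm:prescribing-W-skel} is through nondegenerate $2$-forms, so it preserves the homotopy class of $\eta$; combined with the inductive hypothesis this gives (ii).

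The most delicate step is the isotopy of stable discs in Step 2 above, which relies on Theorem~\ref{thm:real-handle} and hence, for critical index handles, on Murphy's $h$-principle for loose Legendrians (Theorem~\ref{thm:loose}) in dimension $2n-1 \geq 5$, or on Theorem~\ref{thm:loose-3} in dimension $3$ under the overtwistedness hypothesis. One must verify that after attaching an index $n$ handle the contact structure on the new top level set $\Sigma_k$ still permits the next round of loose attachments; in dimension $\geq 5$ this comes for free from Theorem~\ref{thm:loose}, while in dimension $3$ one must track overtwistedness through each elementary step, which is exactly the purpose of the last clause of Theorem~\ref{thm:real-handle} ensuring that the complement of the attaching circle in $\partial_- W_k$ remains overtwisted. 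Together with Proposition~\ref{prop:stab}, which allows us to prescribe loose attaching spheres in any formal Legendrian class, this completes the inductive construction and yields a flexible Weinstein structure with symplectic form homotopic to $\eta$.
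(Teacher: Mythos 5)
Your proof follows essentially the same route as the paper: reduce to elementary cobordisms via an admissible partition, use Theorem~\ref{thm:real-handle} to isotope the stable discs to $J$-totally real, $J$-orthogonally attached ones with loose (resp.\ overtwisted-complement) attaching spheres, apply Lemma~\ref{lm:prescribing-W-skel} to get a Liouville pair near $\p_-W\cup\Delta$, and then spread it over the whole elementary piece using the flow of the Liouville field. The only differences are cosmetic: the paper reduces the elementary step to a single critical point for notational ease and is more careful about how the diffeotopy from Theorem~\ref{thm:real-handle} is corrected to be trivial near $\p_-W$ and $\phi$-preserving (the $f_t$, $g_t$, $k_t$ dance), and it phrases your last extension step as a pullback by an isotopy $h_t$ that squeezes $W$ into the Liouville neighborhood $U$ while preserving $Y$-trajectories (rather than ``pushing forward along the flow''), noting that $h_t^*Y$ is proportional to $Y$ and hence remains gradient-like for $\phi$ — a point worth making explicit but not a gap in your argument.
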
 
 
Theorem~\ref{thm:Weinstein-existence} immediately implies the
following version for manifolds. 

\begin{thm}\label{thm:Weinstein-existence-man}
Let $(V,\phi)$ be a $2n$-dimensional manifold with an exhausting Morse
function $\phi$ that 
has no critical points of index $>n$. Let $\eta$ be a nondegenerate
(not necessarily closed) $2$-form on $V$.
Suppose that $n>2$.
Then there exists a Weinstein structure $(\om,X,\phi)$ on $V$ such
that the nondegenerate $2$-forms $\om$ and $\eta$ on $V$ are homotopic.
Moreover, we can arrange that $(\om,X,\phi)$ is flexible. \qed
\end{thm}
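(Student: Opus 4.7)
The plan is to derive the manifold case from the cobordism Theorem~\ref{thm:Weinstein-existence} by an inductive exhaustion argument. First, pick any auxiliary gradient-like vector field for $\phi$ and apply Lemma~\ref{lm:existence-partition-M} to obtain regular values $c_0 < \min\phi < c_1 < c_2 < \cdots \nearrow \infty$ such that each cobordism $W_k := \{c_{k-1} \leq \phi \leq c_k\}$ is elementary. Write $V_k := \{\phi \leq c_k\}$ so that $V = \bigcup_k V_k$, and regard $V_1 = W_1$ as a Weinstein domain with empty negative boundary.

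Construct flexible Weinstein structures $(\om_k, X_k, \phi|_{V_k})$ satisfying $\om_{k+1}|_{V_k} = \om_k$ and $X_{k+1}|_{V_k} = X_k$ by induction on $k$, maintaining simultaneously a homotopy $H_k$ of nondegenerate $2$-forms on $V_k$ from $\om_k$ to $\eta|_{V_k}$. For the base case $V_1 = W_1$, the hypothesis of Theorem~\ref{thm:Weinstein-existence} on the vector field near $\p_- W_1 = \emptyset$ is vacuous, so the theorem applied with $\eta|_{W_1}$ yields a flexible $(\om_1, X_1)$ with $\om_1$ homotopic to $\eta|_{V_1}$. For the inductive step, first use the homotopy extension property for nondegenerate $2$-forms to deform $\eta|_{W_{k+1}}$, through forms supported in a collar of $\Sigma_k := \{\phi = c_k\}$, to a nondegenerate form $\wt\eta_{k+1}$ that agrees with $\om_k$ near $\Sigma_k$; this extension exists because $\om_k|_{\Sigma_k}$ and $\eta|_{\Sigma_k}$ are already homotopic via $H_k|_{\Sigma_k}$. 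Apply Theorem~\ref{thm:Weinstein-existence} to the elementary cobordism $(W_{k+1}, \phi|_{W_{k+1}})$ with data $(\wt\eta_{k+1}, X_k|_{\Op \Sigma_k})$ to obtain a flexible extension $(\om_{k+1}, X_{k+1})$ on $W_{k+1}$ agreeing with $(\om_k, X_k)$ near $\Sigma_k$ and with $\om_{k+1}|_{W_{k+1}}$ homotopic to $\wt\eta_{k+1}$ rel $\Op \Sigma_k$. Concatenate this rel-boundary homotopy with the reversed collar deformation — arranged so that its trace on $\Sigma_k$ matches $H_k|_{\Sigma_k}$ — to build $H_{k+1}$ extending $H_k$.

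Passing to the direct limit yields a symplectic form $\om$ and a Liouville, gradient-like vector field $X$ on $V$, with each elementary piece flexible, so that the global structure $(V,\om,X,\phi)$ is flexible; the $H_k$ assemble to a global homotopy from $\om$ to $\eta$ through nondegenerate $2$-forms. The one remaining issue, completeness of $X$ on the noncompact manifold $V$, is handled by the standard device of replacing $\phi$ by $f \circ \phi$ for a convex diffeomorphism $f:\R\to\R$ with $f'' \geq 0$ and $\lim_{x\to\infty} f'(x) = \infty$, as indicated in the introduction; this slows trajectories of $X$ sufficiently to prevent finite-time escape without disturbing any of the other axioms. I expect the main technical point to be the careful bookkeeping required to patch the inductive homotopies $H_k$ consistently along the successive level sets $\Sigma_k$, but this is routine given Theorem~\ref{thm:Weinstein-existence}(ii) together with the homotopy extension property for nondegenerate $2$-forms; the completeness arrangement, though essential, is purely a matter of reparametrization.
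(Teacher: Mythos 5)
Your overall plan — exhaust $V$ by sublevel sets, decompose into elementary cobordisms, apply the cobordism Theorem~\ref{thm:Weinstein-existence} inductively while bookkeeping the homotopies $H_k$ of nondegenerate $2$-forms along the level sets — is exactly the reduction the paper has in mind when it calls the manifold version an immediate consequence of the cobordism version, and the inductive bookkeeping via the homotopy extension property for nondegenerate $2$-forms is fine.

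The completeness discussion, however, is wrong. The device of replacing $\phi$ by $f\circ\phi$ is correct in the \emph{Stein} context, because there the Liouville field $X_\phi$ is \emph{defined} in terms of $\phi$ (via $i_{X_\phi}\om_\phi=-d^\C\phi$ with $\om_\phi=-dd^\C\phi$), so reparametrizing $\phi$ genuinely changes the vector field and can make it complete. In the Weinstein context $X$ is independent data: if $X$ is gradient-like for $\phi$ it is also gradient-like for $f\circ\phi$, and the trajectories of $X$ are the same in both cases. So $\phi\mapsto f\circ\phi$ does not ``slow the trajectories'' of $X$ and has no effect on its completeness. Worse, the theorem you are proving asserts the existence of a Weinstein structure with the \emph{given} function $\phi$, so you are not free to replace $\phi$ anyway. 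If completeness needs to be arranged, it must be done by modifying the pair $(\om,X)$ (say in the cylindrical portions of the cobordisms $W_k$, where one has the freedom to make the Liouville form grow fast enough in the $\phi$-direction that each piece is crossed in time at least $1$), not by reparametrizing the fixed function $\phi$. As stated, your fix would not address the issue at all.
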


\begin{proof}[Proof of Theorem~\ref{thm:Weinstein-existence}] 
By decomposing the Morse cobordism  $\fM=(W,\phi)$ into elementary
ones, $W=W_1\cup\dots\cup W_N$, and inductively extending the
Weinstein structure over $W_1,\dots, W_N$, it suffices to consider the
case of an elementary cobordism. 
To simplify the notation, we will assume that $\phi$
has a unique critical point $p$; the general case is similar.  
Let us extend $Y$ to a gradient-like vector field for $\phi$ on $W$
and denote by $\Delta$ the stable disc of $p$.  

\begin{step1}
We first show that, after a homotopy of $(\eta,Y)$ fixed on
$\Op\p_-W$, we may assume that $\Delta$ is $\eta$-isotropic. 

The Liouville form $\lambda=i_Y\eta$ on $\Op\p_-W$ defines a 
contact structure $\xi:=\ker(\lambda|_{\p_-W})$ on $\p_-W$. We choose
an auxiliary $\eta$-compatible almost complex structure $J$ on $W$ which
preserves $\xi$ and maps $Y$ along $\p_-W$ to the Reeb vector field
$R$ of $\lambda|_{\p_-W}$. We apply
Theorem~\ref{thm:real-handle} to find a diffeotopy $f_t:W\to W$
such that the disc $\Delta'=f_1(\Delta)$ is $J$-totally real and
$J$-orthogonally attached to $\p_-W$. This is the only point in the
proof where the overtwistedness assumption for $n=2$ is needed.  
Moreover, according to Theorem~\ref{thm:real-handle}, in the case
$\dim\Delta=n$ we can  arrange that the Legendrian sphere $\p\Delta'$
in $(\p_-W,\xi)$ is loose (meaning that $\p_-W\setminus\p\Delta'$ is
overtwisted in the case $n=2$).  


Next we modify the homotopy $f_t^*J$ to keep it fixed near $\p_-W$. 
$J$-orthogonality implies that $\p\Delta'$ is tangent to the
maximal $J$-invariant distribution $\xi\subset T(\p_-W)$ and thus
$\lambda|_{\p\Delta'}=0$. Since the spaces $T\Delta'$ and
${\rm span}\{T\p\Delta',Y\}$ are both totally real and $J$-orthogonal to
$T(\p_-W)$, we can further adjust the disc $\Delta'$ (keeping
$\p\Delta'$ fixed) to make it tangent to $Y$ in a neighborhood of
$\p\Delta'$. 
It follows that we can modify $f_t$ such that it preserves the
function $\phi$ and the vector field $Y$ on a neighborhood $U$ of
$\p_-W$ (extend $f_t$ from $\p_-W$ to $U$ using the flow of $Y$). 

Hence, there exists a diffeotopy $g_t:W\to W$, $t\in[0,1]$, which
equals $f_t$ on $W\setminus U$, the identity on  $\Op\p_-W$, and
preserves $\phi$ (but not $Y$!) on $U$; see
Figure~\ref{fig:Delta}. Then the diffeotopy 
$k_t:=f_t^{-1}\circ g_t$ equals the identity on $W\setminus U$,
$f_t^{-1}$ on $\Op\p_-W$, and preserves $\phi$ on all of $W$.  
Thus the vector fields $Y_t:=k_t^*Y$ are gradient-like for
$\phi=k_t^*\phi$ and coincide with $Y$ on $(W\setminus
U)\cup\Op\p_-W$. The nondegenerate 2-forms $\eta_t:=g_t^*\eta$ are
compatible with $J_t:=g_t^*J$ and coincide with $\eta$ on
$\Op\p_-W$. Moreover, since $\Delta'$ is $J$-totally real, the
stable disc $\Delta_1:=k_1^{-1}(\Delta) = g_1^{-1}(\Delta')$ of $p$
with respect to $Y_1$ is $J_1$-totally real and $J_1$-orthogonally
attached to $\p_-W$. 

\begin{figure}
\centering
\includegraphics{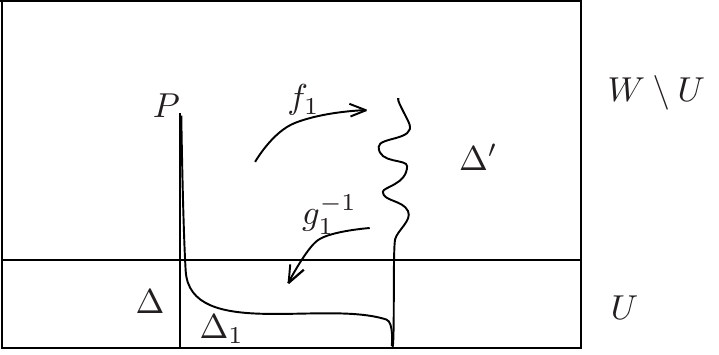}
\caption{Deforming the disc $\Delta$ to one which is totally real and
  $J$-orthogonally attached.}
\label{fig:Delta}
\end{figure}

After renaming $(\eta_1,Y_1,\Delta_1)$ back to $(\eta,Y,\Delta)$, we may
hence assume that $\Delta$ is $J$-totally real and $J$-orthogonally
attached to $\p_-W$ for some $\eta$-compatible almost complex structure
$J$ on $W$ which preserves $\xi$ and maps $Y$ to the Reeb vector field
$R$ along $\p_-W$. In particular, $\p\Delta$ is $\lambda$-isotropic
and $\Delta\cap\Op\p_-W$ is $\eta$-isotropic. Since the space of
nondegenerate 2-forms compatible with $J$ is contractible, after a further
homotopy of $\eta$ fixed on $\Op\p_-W$ and outside a neighborhood
of $\Delta$, we may assume that $\Delta$ is $\eta$-isotropic.  
\end{step1}

\begin{step2}
By Lemma~\ref{lm:prescribing-W-skel} there exists a 
homotopy $(\eta_t,Y_t)$, $t\in[0,1]$, of gradient-like vector fields
for $\phi$ and nondegenerate 2-forms on $W$, fixed on
$\Delta\cup\Op\p_-W$ and outside a neighborhood of $\Delta$, such
that $(\eta_0,Y_0)=(\eta,Y)$ and $(\eta_1,Y_1)$ is Liouville on
$\Op(\p_-W\cup\Delta)$. 
After renaming $(\eta_1,Y_1)$ back to $(\eta,Y)$, we may hence assume
that $(\eta,Y)$ is Liouville on a neighborhood $U$ of
$\p_-W\cup\Delta$. 
\end{step2}

\begin{step3}
Pushing down along trajectories of $Y$, we construct an 
isotopy of embeddings $h_t:W\into W$, $t\in[0,1]$, with $h_0=\id$ and
$h_t=\id$ on $\Op(\p_-W\cup\Delta)$, which preserves trajectories of
$Y$ and such that $h_1(W)\subset U$. Then
$(\eta_t,Y_t):=(h_t^*\eta,h_t^*Y)$ defines a homotopy of nondegenerate 
2-forms and vector fields on $W$, fixed on $\Op(\p_-W\cup\Delta)$,
from $(\eta_0,Y_0)=(\eta,Y)$ to the Liouville structure
$(\eta_1,Y_1)=:(\om,X)$. Since the $Y_t$ are proportional to $Y$, they
are gradient-like for $\phi$ for all $t\in[0,1]$. 

The Weinstein structure $(\om,X,\phi)$ will be flexible if we
choose the stable sphere $\p\Delta$ in Step 1 to be loose, so
Theorem~\ref{thm:Weinstein-existence} is proved. \qedhere  
\end{step3}
\end{proof}

\subsection{Homotopies of flexible Weinstein structures} 

The following Theorems~\ref{thm:W-hom1} and~\ref{thm:W-hom2} for
cobordisms, and Theorems~\ref{thm:W-hom1-man} and~\ref{thm:W-hom2-man}
for manifolds, are our main results concerning deformations of
flexible Weinstein structures. They imply
Theorem~\ref{thm:combined}(b) from the Introduction.

\begin{theorem}[First Weinstein deformation theorem]
\label{thm:W-hom1}
Let $\fW=(W,\om,X,\phi)$ be a flexible Weinstein cobordism of
dimension $2n$. Let $\phi_t$, $t\in[0,1]$, be a Morse homotopy
without critical points of index $>n$ with $\phi_0=\phi$ and
$\phi_t=\phi$ near $\p W$. 
In the case $2n=4$ assume that either $\p_-W$ is overtwisted, or
$\phi_t$ has no critical points of index $>1$. 
Then there exists a homotopy $\fW_t=(W,\om_t,X_t,\phi_t)$,
$t\in[0,1]$, of flexible Weinstein structures, starting at  
$\fW_0=\fW$, which is fixed near $\p_-W$ and fixed up to scaling near
$\p_+W$. 
\end{theorem}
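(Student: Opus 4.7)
The plan is to reduce to elementary pieces via the admissible partition Lemma~\ref{lm:admiss-homotopy}, lift each piece in turn using parametric versions of the tools from Sections~\ref{sec:flex}--\ref{sec:holonomy}, and concatenate using the Serre fibration property of Weinstein holonomy (Lemma~\ref{lm:Weinstein-Serre}) to match the constructions at the interfaces. For a Type~I elementary Smale homotopy (no birth or death), the strategy parallels the proof of Theorem~\ref{thm:Weinstein-existence} executed in a $1$-parameter family. Tracking $X$ along the homotopy yields a family of stable (half-)discs $\Delta_t$; by flexibility of $\fW_0$ and of the structure we aim to produce at $t=1$, the attaching spheres $\p\Delta_0$ and $\p\Delta_1$ may be taken loose. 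The parametric $h$-principle for totally real discs attached along loose Legendrians (Theorem~\ref{thm:real-handle-subcrit}) deforms $\Delta_t$ through a family that is $J_t$-totally real and $J_t$-orthogonally attached to $\p_-W$ for some path $J_t$ of $\om_t$-compatible almost complex structures, while looseness is propagated throughout by Remark~\ref{rem:loose-knots}(3). A further homotopy of the nondegenerate $2$-forms (possible by contractibility of the space of $J_t$-compatible forms) makes each $\Delta_t$ isotropic, after which the parametric Lemma~\ref{lm:prescribing-W-skel-param} produces a family of Liouville structures on neighborhoods of $\p_-W\cup\Delta_t$. Pushing down along the flows of the vector fields extends the construction to a family of flexible Weinstein structures on all of $W$, fixed near $\p_-W$ and fixed up to scaling near $\p_+W$.

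For Type~IIb (birth), Proposition~\ref{prop:creation} inserts a pair of critical points of index $k$ and $k-1$ into a chosen ball. If $k<n$, the new critical points are automatically subcritical and produce a flexible elementary cobordism. If $k=n$, we arrange the creation to occur inside a Darboux chart placed so that a standard loose Legendrian chart is available nearby, so the attaching sphere of the new index $n$ critical point can be routed through it and thereby made loose from the outset (Theorem~\ref{thm:loose}(a) applied locally). For Type~IId (death), Proposition~\ref{prop:cancellation} cancels a pair $p,q$ once their stable and unstable manifolds intersect transversely along a unique gradient trajectory. This preliminary geometric hypothesis is secured by a preparatory homotopy of the Liouville vector field (with the function held fixed), adjusting its holonomy via Lemma~\ref{lm:Weinstein-Serre} and its Morse analogue Lemma~\ref{lm:fixing-A-Morse} to bring the relevant intersections into general position and to match the prescribed endpoint structures.

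The main obstacle is the Type~I case with index $n$ critical points, where Theorem~\ref{thm:real-handle-subcrit} must be executed consistently with both prescribed Weinstein endpoints, and looseness must be propagated continuously along the entire family. In dimension $2n=4$, Theorem~\ref{thm:real-handle-subcrit} does not cover $k=n=2$; under the hypothesis that $\p_-W$ is overtwisted we substitute the parametric $h$-principle for Legendrian knots in overtwisted $3$-manifolds (Theorem~\ref{thm:loose-3}), arranging the overtwisted disc to stay disjoint from the family of attaching spheres so that the complement remains overtwisted throughout the deformation. In the alternative $4$-dimensional scenario, where $\phi_t$ has only critical points of index $\le 1$, every attachment is subcritical ($k=1<n=2$) and the standard subcritical $h$-principle (Theorem~\ref{thm:h-isotropic-subcrit}) applies with no overtwistedness hypothesis. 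In all cases the concatenation of the lifts from consecutive pieces of the admissible partition is carried out by modifying the Liouville field on intervening critical-point-free subcobordisms using Lemma~\ref{lm:Weinstein-Serre}, which produces the desired path $\fW_t$ of flexible Weinstein structures.
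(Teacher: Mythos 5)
Your top-level plan (partition into elementary Smale homotopy pieces, handle each piece, concatenate) matches the paper's, and your treatment of the $2n=4$ alternatives and the use of Remark~\ref{rem:loose-knots}(3) for propagating looseness are fine. But the proposal has a genuine gap in the central Type~I step, and it sidesteps the technical heart of the paper's argument.

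The crucial point you miss is this: within a given elementary piece $[t_{k-1},t_k]$ of the admissible partition, the Smale homotopy $(W,Y_t,\phi_t)$ is elementary by construction of the partition, but the Weinstein structure $\fW_{t_{k-1}}=(W,\om,X,\phi)$ that you inherit from the previous step carries its own Liouville field $X$, which generally differs from $Y_{t_{k-1}}$, and the Smale cobordism $(W,X,\phi)$ need \emph{not} be elementary --- $X$ can have trajectories connecting different critical points of $\phi$, in which case the $X$-stable manifolds are not even embedded discs. Your Type~I argument ``tracks $X$ along the homotopy'' to get a family $\Delta_t$ of stable discs and then runs the parametric $h$-principle (Theorem~\ref{thm:real-handle-subcrit}) on them, but this cannot get started when $X$ is not gradient-like-elementary. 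The paper isolates precisely this issue in Lemma~\ref{lm:W-def-elem} (note the parenthetical ``but not necessarily $Y_0=X$!'') and resolves it with Lemma~\ref{lm:W-interpolation} (and Lemma~\ref{lm:W-interpolation-II} for the Type~IId case), which is a nontrivial Weinstein homotopy that ``rectifies'' the vector field before anything else happens. These lemmas are where the $h$-principles for loose isotropic isotopies and the holonomy Serre fibration Lemma~\ref{lm:Weinstein-Serre} actually enter; your single vague sentence about ``concatenating using the Serre fibration property to match the constructions at the interfaces'' does not replace them.

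Beyond that gap, your Type~I mechanism is also genuinely different from the paper's. After the rectification step, the paper does \emph{not} redo a parametric version of the existence theorem; instead it invokes Lemma~\ref{lem:prescribe-profile} to produce a family $\wt\phi_t$ of Lyapunov functions for the (now fixed) vector field with the same profile as $\phi_t$, and Lemma~\ref{lm:Morse-profiles} to conjugate $\phi_t$ to $\wt\phi_t$ by a diffeotopy $h_t$; the Weinstein homotopy is then simply $h_t^*(\om,X,\wt\phi_t)$. This diffeotopy-pullback argument is considerably lighter than the parametric $h$-principle route, which the paper reserves for Lemma~\ref{lem:Weinstein-homotopy-elem} in the proof of the \emph{second} deformation theorem (where both Weinstein endpoints are prescribed and the function is held fixed). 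Your Type~I sketch is essentially that second-theorem argument transplanted here; it could probably be made to work after the rectification step, but it is not what the paper does, and as written it produces a Weinstein homotopy without ensuring it starts exactly at the prescribed $\fW_0$.

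Finally, a smaller point: for Type~IIb, the flexibility of the structure after the birth moment does not require arranging the creation near a pre-existing loose chart. The stable manifold of the newly born index-$n$ critical point meets $\p_-W$ in a Legendrian \emph{disc} (since the pair was just created with a unique connecting trajectory), and by Remark~\ref{rem:loose-knots}(1) all Legendrian discs are automatically loose. This is the argument the paper uses, and it is both simpler and more robust than trying to route the attaching sphere through a loose chart.
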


\begin{thm}[Second Weinstein deformation theorem]
\label{thm:W-hom2}
Let $\fW_0=(\om_0,X_0,\phi_0)$ and $\fW_1=(\om_1,X_1,\phi_1)$ be two
flexible Weinstein structures on a cobordism $W$ of dimension $2n$. 
Let $\phi_t$, $t\in[0,1]$, be a Morse homotopy without critical points
of index $>n$ connecting $\phi_0$ and $\phi_1$.
In the case $2n=4$ assume that either $\p_-W$ is overtwisted, or
$\phi_t$ has no critical points of index $>1$. 
Let $\eta_t$, $t\in[0,1]$, be a homotopy of nondegenerate (not
necessarily closed) $2$-forms connecting $\om_0$ and $\om_1$ 
such that $(\eta_t,Y_t,\phi_t)$ is Weinstein near $\p_-W$ for a
homotopy of vector fields $Y_t$ on $\Op\p_-W$ connecting $X_0$ and
$X_1$.   

Then $\fW_0$ and $\fW_1$ can be connected by a homotopy
$\fW_t=(\om_t,X_t,\phi_t)$, $t\in[0,1]$, of flexible Weinstein
structures, agreeing with $(\eta_t,Y_t,\phi_t)$ on $\Op\p_-W$, such
that the paths of nondegenerate $2$-forms  
$t\mapsto\eta_t$ and $t\mapsto\om_t$, $t\in[0,1]$, are homotopic rel
$\Op\p_-W$ with fixed endpoints. 
\end{thm}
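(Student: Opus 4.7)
The plan is first to reduce to the constant-function case using the first deformation theorem, then to adapt the argument of the existence theorem (Theorem~\ref{thm:Weinstein-existence}) to a parametric setting. Begin by applying Theorem~\ref{thm:W-hom1} to $\fW_0$ and the Morse homotopy $\phi_t$, obtaining a homotopy $\fW_t' = (W,\om_t',X_t',\phi_t)$ of flexible Weinstein structures starting at $\fW_0$. By concatenation this reduces the problem to connecting the two flexible Weinstein structures $\fW_1$ and $\fW_1'$ on $(W,\phi_1)$ by a homotopy of flexible Weinstein structures with the function $\phi_1$ held \emph{fixed} throughout. The hypothesis on the family $\eta_t$ guarantees that the symplectic forms $\om_1$ and $\om_1'$ lie in the same path component of nondegenerate $2$-forms rel $\Op\p_-W$.

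For this reduced problem, mimic the three-step proof of Theorem~\ref{thm:Weinstein-existence} in a one-parameter version. Decompose $(W,\phi_1)$ into elementary Smale cobordisms via Lemma~\ref{lm:existence-partition-M} and argue inductively, so reduce to an elementary cobordism with a single critical point $p$. The two structures have $\om_i$-isotropic stable discs $\Delta^0,\Delta^1$ with loose Legendrian boundary spheres $\p\Delta^i\subset\p_-W$, by flexibility. Choose an interpolating family $\eta_t$ of nondegenerate $2$-forms together with a compatible family $J_t$ of almost complex structures preserving the boundary contact structure and mapping the corresponding vector field to the Reeb field. Applying the parametric totally-real $h$-principle (Theorem~\ref{thm:real-handle-subcrit}(b)) to a smooth isotopy of embedded discs connecting $\Delta^0$ and $\Delta^1$ then produces a family $\Delta_t$ of $J_t$-totally real, $J_t$-orthogonally attached discs with loose Legendrian boundary throughout. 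A further homotopy of $\eta_t$ supported near $\Delta_t$, using contractibility of the space of compatible nondegenerate $2$-forms, makes each $\Delta_t$ $\eta_t$-isotropic.

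With the family $\Delta_t$ in hand, the parametric Lemma~\ref{lm:prescribing-W-skel-param} yields a further homotopy promoting $(\eta_t,Y_t)$ to a Liouville structure on an open neighborhood of $\p_-W\cup\Delta_t$, for a suitable family $Y_t$ of gradient-like vector fields for $\phi_1$ with $\Delta_t$ as stable disc. A $t$-parametric push-down isotopy along the flow of $Y_t$, with endpoints adjusted using Lemma~\ref{lm:Weinstein-Serre} so that the induced holonomies match those coming from $\fW_1$ and $\fW_1'$, then compresses the whole cobordism into this Liouville neighborhood and produces, after pullback, the desired homotopy of flexible Weinstein structures.

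The main obstacle will be the construction of the family $\Delta_t$ of totally real discs interpolating $\Delta^0$ and $\Delta^1$ with loose Legendrian boundary throughout. This is the single place where Murphy's parametric $h$-principle is essential, and it is precisely the flexibility of both $\fW_0$ and $\fW_1$ that supplies the loose endpoints required by Theorem~\ref{thm:real-handle-subcrit}(b). In dimension $2n=4$ the parametric loose-Legendrian $h$-principle is unavailable, which explains why the theorem assumes either overtwistedness of $\p_-W$ (so that the overtwisted $3$-dimensional $h$-principle of Theorem~\ref{thm:loose-3}(b) can be used in place of Theorem~\ref{thm:real-handle-subcrit}(b), with the $2$-parametric family arranged to keep $\p_-W\setminus\p\Delta_t$ overtwisted) or the absence of index-$2$ critical points in $\phi_t$, in which case the subcritical $h$-principle of Theorem~\ref{thm:h-isotropic-subcrit} suffices.
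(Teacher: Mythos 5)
Your proposal follows essentially the same strategy as the paper: first use Theorem~\ref{thm:W-hom1} to reduce to the case where the function is held fixed at $\phi_1$, then decompose $W$ into elementary cobordisms and run a one-parametric version of the proof of the Weinstein existence theorem, replacing Theorem~\ref{thm:real-handle} and Lemma~\ref{lm:prescribing-W-skel} by their parametric counterparts Theorem~\ref{thm:real-handle-subcrit} and Lemma~\ref{lm:prescribing-W-skel-param}. The paper packages the fixed-function case as Lemmas~\ref{lem:Weinstein-homotopy-elem} and~\ref{lem:W-hom-coarse}, but the underlying argument is the same as yours.
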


Theorems~\ref{thm:W-hom1} 
and~\ref{thm:W-hom2} will be proved in Sections~\ref{sec:proof-W1}
and~\ref{sec:proof-W2}. They have the following analogues 
for deformations of flexible Weinstein {\it manifolds}, which are
derived from the cobordism versions by induction over sublevel sets.

\begin{theorem}\label{thm:W-hom1-man}
Let $\fW=(V,\om,X,\phi)$ be a flexible Weinstein manifold of 
dimension $2n$. 
Let $\phi_t$, $t\in[0,1]$, be a Morse homotopy without critical points
of index $>n$ with $\phi_0=\phi$.  
In the case $2n=4$ assume that $\phi_t$ has no critical
points of index $>1$. 
Then there exists a homotopy $\fW_t=(V,\om_t,X_t,\phi_t)$,
$t\in[0,1]$, of flexible Weinstein structures such that $\fW_0=\fW$. 

If the Morse homotopy $\phi_t$ are fixed outside a compact set,
then the Weinstein homotopy $\fW_t$ can be chosen fixed outside a
compact set. \qed
\end{theorem}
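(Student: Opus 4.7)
The plan is to deduce the manifold statement from the cobordism version Theorem~\ref{thm:W-hom1} by induction over an exhaustion by Weinstein subdomains compatible with the family $\phi_t$. By definition of a Morse homotopy on $V$ (Section~\ref{ss:W-homotopies}), the path $\phi_t$ decomposes into finitely many simple Morse homotopies, each of which comes equipped with smooth functions $c_1(t)<c_2(t)<\cdots$ on the interval in question such that $c_k(t)$ is a regular value of $\phi_t$ and $V=\bigcup_k\{\phi_t\leq c_k(t)\}$. After concatenation it suffices to treat one simple Morse homotopy. Set $V_k(t):=\{\phi_t\leq c_k(t)\}$; then each $W_k(t):=V_k(t)\setminus\Int V_{k-1}(t)$ is a compact cobordism on which $\phi_t$ defines a Morse homotopy with $\p_\pm W_k(t)$ as regular level sets, and the original Weinstein structure $\fW$ restricts at $t=0$ to a flexible Weinstein structure on each $W_k(0)$.

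Next I would build $\fW_t$ inductively in $k$. The base case is the flexible Weinstein cobordism $V_1(t)$ with $\p_-=\varnothing$, to which Theorem~\ref{thm:W-hom1} applies directly, producing a Weinstein homotopy $\fW_t^{(1)}$ on $V_1(t)$ starting at $\fW|_{V_1(0)}$ and fixed up to scaling near $\p_+V_1(t)$. For the inductive step, having constructed a flexible Weinstein homotopy on $V_{k-1}(t)$ which agrees with $\fW$ at $t=0$ and is fixed up to scaling near $\p V_{k-1}(t)$, view $W_k(t)$ as a flexible Weinstein cobordism whose $\p_-$-boundary carries the Liouville form already prescribed by $\fW_t^{(k-1)}$. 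A second application of Theorem~\ref{thm:W-hom1} extends the homotopy across $W_k(t)$, fixed near $\p_-$ (so that it glues smoothly with $\fW_t^{(k-1)}$) and fixed up to scaling near $\p_+$ (preparing the next step). Gluing all the $\fW_t^{(k)}$ yields a smooth Weinstein homotopy $\fW_t$ on $V$; flexibility of $\fW_t$ follows from flexibility of each piece together with the decomposition $\{W_k(t)\}$ into elementary cobordisms. The dimension-$4$ hypothesis is needed precisely because, at the manifold level, flexibility means subcriticality, so the overtwisted option in Theorem~\ref{thm:W-hom1} is not available beyond the first cobordism.

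For the compact support statement, if $\phi_t\equiv\phi$ on $V\setminus K$ for a compact set $K$, I would choose the exhaustion so that $K\subset\Int V_{k_0}(t)$ with $c_k(t)$ independent of $t$ for $k\geq k_0$; this is possible because $\phi_t$ itself is independent of $t$ outside $K$. On each $W_k(t)$ with $k>k_0$ we simply set $\fW_t:=\fW|_{W_k}$, and at step $k=k_0$ we apply Theorem~\ref{thm:W-hom1} with the additional boundary constraint that the homotopy be fixed near $\p_+W_{k_0}(t)$ as well (the scaling constant can be taken to be $1$ there, since $\fW|_{\p_+W_{k_0}}$ is stationary in $t$).

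The main technical point I expect to require care is the propagation of the scaling factors near the $\p_+$-boundaries through the induction: Theorem~\ref{thm:W-hom1} only guarantees fixedness up to a time-dependent rescaling of the Liouville form near $\p_+W_{k-1}(t)$, and to feed this into the next step one must either absorb that constant into a smooth reparametrization of the target of $\phi_t$ on a collar of $\p V_{k-1}(t)$, or equivalently modify the Liouville form on the next cobordism by the inverse rescaling so that the gluing is genuinely smooth. Once this bookkeeping is handled, the countable induction converges in the $C^\infty$-topology on $\Weinstein$ defined in Section~\ref{ss:W-homotopies} because on any compact subset only finitely many steps contribute nontrivial modifications.
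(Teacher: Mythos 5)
Your proposal is correct and takes essentially the same approach as the paper: the paper states (immediately before Theorems~\ref{thm:W-hom1-man} and~\ref{thm:W-hom2-man}) that the manifold versions are ``derived from the cobordism versions by induction over sublevel sets'' and gives no further proof, which is precisely your argument. The one routine point you leave implicit, besides the scaling bookkeeping you correctly flag, is that the subcobordisms $W_k(t)$ move with $t$ and must first be identified with fixed cobordisms via diffeotopies before Theorem~\ref{thm:W-hom1} can be invoked, just as is done inside the paper's own proof of the cobordism version.
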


\begin{thm}\label{thm:W-hom2-man}
Let $\fW_0=(\om_0, X_0,\phi_0)$ and $\fW_1=(\om_1, X_1,\phi_1)$ be two
flexible Weinstein structures on the same manifold $V$
of dimension $2n$. 
Let $\phi_t$, $t\in[0,1]$, be a Morse homotopy without critical points
of index $>n$ connecting $\phi_0$ and $\phi_1$. 
In the case $2n=4$, assume that $\phi_t$ has no critical points of
index $>1$. 
Let $\eta_t$ be a homotopy of nondegenerate $2$-forms on $V$
connecting $\om_0$ and $\om_1$. 
Then there exists a homotopy $\fW_t=(\om_t,X_t,\phi_t)$ of flexible
Weinstein structures connecting $\fW_0$ and $\fW_1$ such that the
paths $\om_t$ and $\eta_t$ of nondegenerate $2$-forms are homotopic
with fixed endpoints. \qed  
\end{thm}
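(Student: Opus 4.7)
The plan is to derive Theorem~\ref{thm:W-hom2-man} from its cobordism version Theorem~\ref{thm:W-hom2} by an induction over a $t$-dependent Weinstein exhaustion of $V$. Since every Morse homotopy on a manifold is, by definition, a composition of finitely many simple Morse homotopies, we may assume $\phi_t$ is simple, so there exist smooth functions $c_1<c_2<\cdots$ on $[0,1]$ with $c_k(t)$ a regular value of $\phi_t$ for all $t$ and $\bigcup_k\{\phi_t\leq c_k(t)\}=V$. Write $W^{(k)}(t):=\{\phi_t\leq c_k(t)\}$ and let $W_{k+1}(t):=\{c_k(t)\leq\phi_t\leq c_{k+1}(t)\}$ denote the shells between successive sublevel sets. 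A family of diffeomorphisms (obtained by integrating a vector field along which $c_k$ and $c_{k+1}$ are constant) identifies each $W_{k+1}(t)$ with a fixed cobordism, so the family $\phi_t|_{W_{k+1}(t)}$ becomes a genuine Morse homotopy on a fixed cobordism, in the sense required by Theorem~\ref{thm:W-hom2}.

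I will then inductively construct, for each $k\geq 0$, a flexible Weinstein homotopy $\fW_t^{(k)}=(\om_t^{(k)},X_t^{(k)},\phi_t|_{W^{(k)}(t)})$ connecting $\fW_0|_{W^{(k)}(0)}$ to $\fW_1|_{W^{(k)}(1)}$, together with a homotopy $\eta_t^{(k)}$ of nondegenerate $2$-forms on $V$ such that $\eta_t^{(k)}|_{W^{(k)}(t)}=\om_t^{(k)}$ and such that $\eta_t^{(k)}$ is homotopic rel $\{0,1\}$ to $\eta_t^{(k-1)}$ by a $2$-parameter family supported in $W_k(t)$. The base case $k=0$ is trivial, with $\eta_t^{(0)}:=\eta_t$. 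For the inductive step, apply Theorem~\ref{thm:W-hom2} on the shell $W_{k+1}(t)$ with lower boundary data supplied by $\fW_t^{(k)}$ in a collar of $\p_+W^{(k)}(t)$, Morse homotopy $\phi_t|_{W_{k+1}(t)}$, endpoints $\fW_0|_{W_{k+1}(0)}$ and $\fW_1|_{W_{k+1}(1)}$, and auxiliary $2$-form homotopy $\eta_t^{(k)}|_{W_{k+1}(t)}$. The theorem produces a flexible Weinstein homotopy on the shell which glues smoothly to $\fW_t^{(k)}$ to define $\fW_t^{(k+1)}$, and it also produces a path of symplectic forms that is homotopic rel $\Op\p_-W_{k+1}(t)$ to $\eta_t^{(k)}|_{W_{k+1}(t)}$. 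Using this auxiliary homotopy to deform $\eta_t^{(k)}$ inside $W_{k+1}(t)$ yields $\eta_t^{(k+1)}$. Setting $\fW_t:=\bigcup_k\fW_t^{(k)}$ (which is locally eventually constant in $k$ and hence well defined) gives a flexible Weinstein homotopy on $V$ with the required endpoints.

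The main obstacle is the bookkeeping of the $t$-dependent domains $W^{(k)}(t)$ and ensuring that the inductively produced homotopies of nondegenerate $2$-forms assemble into a single well-defined homotopy between $\om_t$ and $\eta_t$ on all of $V$. The key point is that the inductive step at stage $k$ alters $\eta_t^{(k-1)}$ only inside the compact shell $W_k(t)$; since every compact subset of $V$ meets only finitely many shells, the locally finite concatenation of these $2$-parameter deformations converges in the $C^\infty$-topology to a bona fide homotopy of paths of nondegenerate $2$-forms with fixed endpoints, connecting $t\mapsto\om_t$ and $t\mapsto\eta_t$. Flexibility of the global homotopy follows because each inductive piece is produced as a flexible Weinstein cobordism homotopy by Theorem~\ref{thm:W-hom2}, and the concatenation of flexible elementary decompositions along regular level sets is again a flexible decomposition. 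Finally, the hypothesis restricting critical indices in dimension $2n=4$ is exactly what is needed to invoke Theorem~\ref{thm:W-hom2} in each elementary piece, since in the manifold case there is no boundary on which an overtwistedness assumption could be placed.
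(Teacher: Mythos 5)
Your proposal is correct and takes essentially the same route as the paper, which states only that Theorem~\ref{thm:W-hom2-man} is ``derived from the cobordism version by induction over sublevel sets''; you have simply fleshed out that induction, including the bookkeeping of $t$-dependent shells $W_{k+1}(t)=\{c_k(t)\leq\phi_t\leq c_{k+1}(t)\}$ from the definition of a simple Morse homotopy, the supply of lower boundary data $(\eta_t^{(k)},X_t^{(k)},\phi_t)$ from the previous stage, and the locally finite assembly of the $2$-form homotopies. One small caveat: your last sentence about the $2n=4$ hypothesis is slightly off --- the inner shells $W_{k+1}(t)$ for $k\geq 1$ do have a lower boundary on which overtwistedness could in principle be imposed, but the point is rather that the first shell has $\p_-W_1=\emptyset$ (so the index condition is forced there) and the index hypothesis then applies uniformly to every shell.
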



\subsection{Proof of the first Weinstein deformation theorem}
\label{sec:proof-W1}

  
The proof of Theorem~\ref{thm:W-hom1} is based on
the following three lemmas. 
 
\begin{lemma}
\label{lm:W-interpolation}
Let $\fW=(W,\om,X,\phi)$ be a flexible Weinstein cobordism
and $Y$ a gradient-like vector field for $\phi$
such that the Smale cobordism $(W,Y,\phi)$ is {\em elementary}. 
Then there exists a family $X_t$, $t\in[0,1]$, of gradient-like vector
fields for $\phi$ and a family $\omega_t$, $t\in[0,\frac12]$, of symplectic
forms on $W$ such that 
\begin{itemize}
\item $\fW_t=(W,\om_t,X_t,\phi)$, $t\in[0,\frac12]$, is a Weinstein
  homotopy with $\fW_0=\fW$, fixed on $\Op\p_-W$ and fixed up to
  scaling on $\Op\p_+W$;  
\item $X_1=Y$ and the Smale cobordisms $(W,X_t,\phi)$,
  $t\in[\frac12,1]$, are elementary.  
\end{itemize}
\end{lemma}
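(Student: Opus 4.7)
The plan is to split the interval $[0,1]$ as $[0,1/2]\cup[1/2,1]$, using the first half for a genuine Weinstein homotopy and the second for a purely Smale interpolation. The aim of Stage~1 is to produce a Weinstein structure $\fW_{1/2}=(\om_{1/2},X_{1/2},\phi)$ whose Liouville field $X_{1/2}$ coincides with $Y$ on an open neighborhood $N$ of the stable skeleton $\Delta_Y:=\bigcup_p D_p^Y$ of $Y$; Stage~2 is then essentially automatic. For Stage~1, I would work inside an admissible flexible partition of $\fW$, deforming $(\om,X)$ one elementary piece at a time so that its stable skeleton comes to match $\Delta_Y$. In each critical level the attaching Legendrian sphere $S_p^X=\p D_p^X$ is loose by flexibility. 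A convex combination of $X$ and $Y$ near $p$ gives a smooth $1$-parameter family of gradient-like vector fields with constant Morse index at $p$, and thereby a smooth isotopy of $S_p^X$ to $\p D_p^Y$ inside $\phi^{-1}(\phi(p)-\varepsilon)$; the induced formal Legendrian framings agree because they depend only on the symplectic data at $T_pW$ and the index of $p$. After rendering $\p D_p^Y$ loose via Theorem~\ref{thm:real-handle}, Murphy's $h$-principle (Theorem~\ref{thm:loose}) supplies a Legendrian isotopy from $S_p^X$ to this loose representative; Theorem~\ref{thm:h-isotropic-subcrit} handles subcritical $p$. The parametric Lemma~\ref{lm:prescribing-W-skel-param} then lifts these isotopies of isotropic skeleta to a Weinstein homotopy $(\om_t,X_t,\phi)$ on $[0,1/2]$, fixed near $\p_-W$ and fixed up to scaling near $\p_+W$. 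A final localized modification via Lemma~\ref{lm:prescribing-W-skel} prescribes $\om_{1/2}$ on a neighborhood $N$ of $\Delta_Y$ so that $Y$ becomes Liouville there, forcing $X_{1/2}=Y$ on $N$.

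For Stage~2, set $X_{1/2+s/2}:=(1-s)X_{1/2}+sY$ for $s\in[0,1]$. By Lemma~\ref{lem:cone} each $X_t$ is gradient-like for $\phi$; on $N$ both summands equal $Y$, so $X_t=Y$ on $N$. Since $D_q^Y\subset\Delta_Y\subset N$ is $Y$-invariant, it is also $X_t$-invariant, and its tangent space at $q$ is the stable subspace of $dX_{t,q}=dY_q$; hence $D_q^Y$ is the $X_t$-stable manifold of every critical point $q$. Now suppose some $X_t$-trajectory $\gamma$ connected two distinct critical points $p,q$; then $\gamma$ flows into $q$ as $r\to\infty$, so $\gamma$ lies entirely in the $X_t$-stable manifold of $q$, i.e.\ in $D_q^Y\subset N$. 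On $N$ we have $X_t=Y$, so $\gamma'=Y\circ\gamma$ and $\gamma$ is a $Y$-trajectory from $p$ to $q$, contradicting the elementariness of $(W,Y,\phi)$. Therefore each $(W,X_t,\phi)$, $t\in[1/2,1]$, is elementary, as required.

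The main obstacle lies in Stage~1: invoking Murphy's $h$-principle parametrically across all index-$n$ critical points and lifting the resulting Legendrian isotopies to a coherent Weinstein homotopy via Lemma~\ref{lm:prescribing-W-skel-param}, while respecting the prescribed boundary behavior. Flexibility of $\fW$ is indispensable here: without looseness of the spheres $S_p^X$, Murphy's theorem fails and rigid Legendrian obstructions (Chekanov--Eliashberg invariants and the like) would in general bar the desired isotopy. Verifying the formal Legendrian framings, stitching the partial modifications across pieces of the partition into a single smooth homotopy, and arranging the final localized adjustment that makes $X_{1/2}=Y$ on $N$ constitute the technical heart of the argument.
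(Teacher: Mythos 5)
Your plan diverges from the paper's in an essential way: you attempt to make the Liouville field of a deformed Weinstein structure literally equal to $Y$ on a neighborhood $N$ of the $Y$-skeleton $\Delta_Y$, whereas the paper never tries to make $\Delta_Y$ into a Weinstein skeleton at all. The gap is in Stage~1. For $X_{1/2}=Y$ to hold near $\Delta_Y$ with $(\om_{1/2},X_{1/2})$ Liouville, the discs $D^Y_q$ would have to be $\om_{1/2}$-isotropic; but $\Delta_Y$ is an arbitrary smooth skeleton of a gradient-like field and there is no symplectic form on $W$ with the prescribed boundary behavior for which it is isotropic a priori. The machinery you invoke does not bridge this: Lemma~\ref{lm:prescribing-W-skel} and its parametric version~\ref{lm:prescribing-W-skel-param} \emph{require} the stable discs to be already isotropic for the ambient nondegenerate $2$-form as a hypothesis, and they produce families of pairs $(\om^s_t,X^s_t)$ that are merely nondegenerate/gradient-like (Liouville only near the skeleton at $s=1$), hence not Weinstein homotopies. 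Likewise, applying Theorem~\ref{thm:real-handle} "to render $\p D^Y_p$ loose" and then Murphy's Theorem~\ref{thm:loose} yields a Legendrian sphere \emph{isotopic to but not equal to} $\p D_p^Y$, so the resulting skeleton is no longer $\Delta_Y$ and the "final localized modification forcing $X_{1/2}=Y$ on $N$" has nothing left to attach to. Stage~2 is correct \emph{given} Stage~1 (your argument that $D^Y_q$ becomes the $X_t$-stable manifold for $t\in[\tfrac12,1]$ is fine), but Stage~1 as written does not deliver its conclusion.

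The paper's actual strategy sidesteps all of this. It first performs a preliminary homotopy of gradient-like fields (keeping everything elementary via Lemma~\ref{lm:Morse-Serre}) to arrange $Y=X$ on $\Op\p W$ and on tubular neighborhoods $W_j$ of the critical levels. It then works only on the critical-point-free cobordisms $V_j$ in between, where a Weinstein structure is governed by its \emph{holonomy}: it $C^0$-approximates the smooth holonomy isotopy of the stable link $\bS_{j+1}^-$ by a loose isotropic isotopy (Theorems~\ref{thm:h-isotropic-subcrit}, \ref{thm:loose-3}, \ref{thm:loose}), extends it by contact isotopy extension, lifts it via Lemma~\ref{lm:Weinstein-Serre} to a Weinstein homotopy on $[0,\tfrac12]$, and then uses Lemma~\ref{lm:fixing-A-Morse} to finish the path of gradient-like fields to $Y$ on $[\tfrac12,1]$ while keeping the images of $\bS^-_{j+1}$ disjoint from $\bS^+_j$. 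Flexibility enters through the looseness needed for the $h$-principles in the descending induction over $j$. If you want to salvage your outline, replace the skeleton-matching of Stage~1 by a holonomy-matching argument of this kind; the clean interpolation of Stage~2 then becomes the statement that $\Gamma_{X_t}(\bS^-_{j+1})\cap\bS^+_j=\varnothing$, rather than a pointwise equality $X_t=Y$ on a neighborhood.
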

 
\begin{proof}
{\bf Step 1.} Let $c_1<\dots<c_N$ be the critical values of the function $\phi$. 
Set $c_0:=\phi|_{\p_-W}$ and $c_{N+1}:=\phi|_{\p_+W}$.  
Choose $\eps\in(0,\min_{j=0,\dots,
  N}\frac{c_{j+1}-c_j}2)$ and define
\begin{align*}
   W_j &:= \{c_j-\eps\leq \phi\leq c_j+\eps\},\quad j=2,\dots, N-1, \\ 
   W_1 &:= \{\phi\leq c_1+\eps\},\qquad W_N := \{\phi\geq c_N-\eps\}, \\ 
   V_j &:= \{c_j+\eps\leq\phi\leq   c_{j+1}-\eps\},\quad j=1,\dots,
   N-1, \\
   \Sigma_j^{\pm } &:= \{\phi=c_{j}\pm\eps\}, \quad j=1,\dots, N;
   \end{align*}
see Figure~\ref{fig:VW}.

\begin{figure}
\centering
\includegraphics{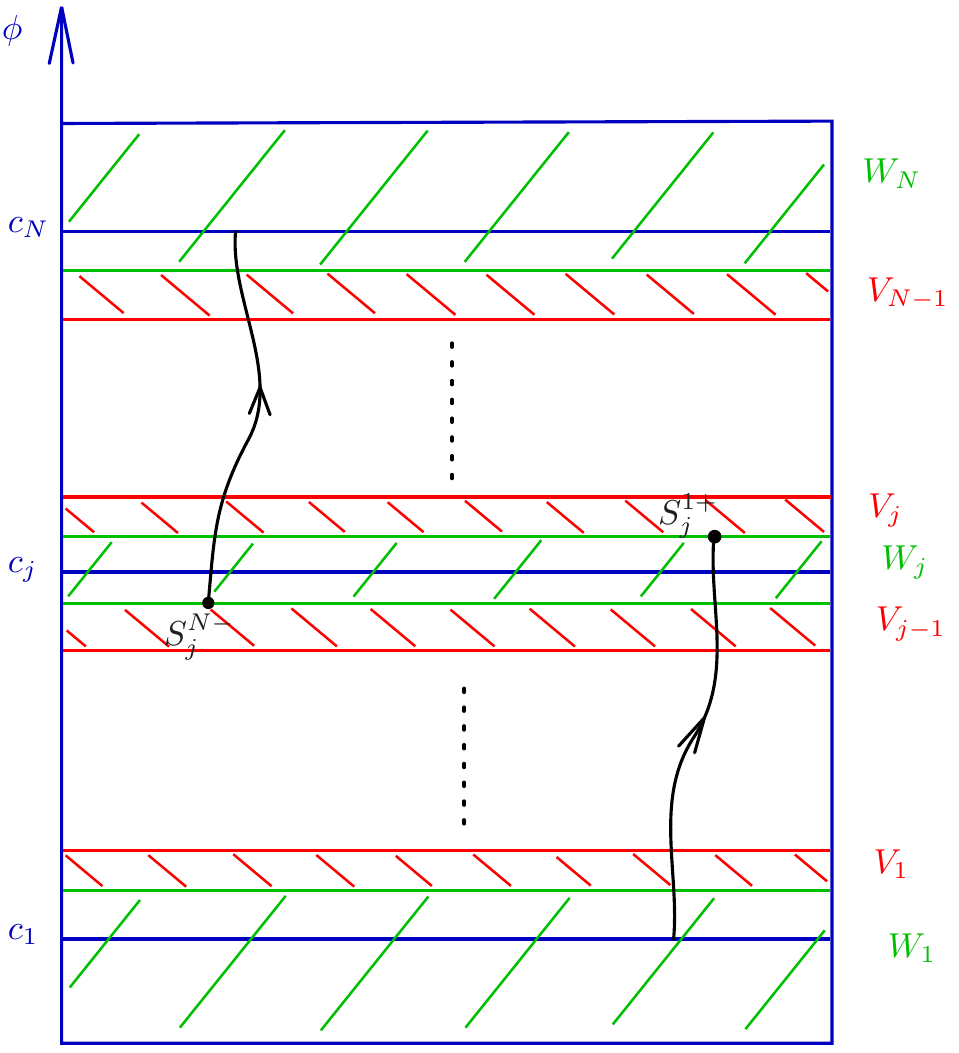}
\caption{The partition of $W$ into subcobordisms.}
\label{fig:VW}
\end{figure}

Thus  we have $\Sigma_j^+ = \p_-V_j=\p_+W_j$ for $j=1,\dots,N-1$ and
$\Sigma_j^- = \p_+V_{j} = \p_-W_j$ for $j=2,\dots, N$.  
We denote by $\xi_j^{\pm }$ the contact structure induced by the
Liouville form $i_X\om$ on $\Sigma_j^{\pm }$, $j=1,\dots, N$. 
 
For $k\geq j$ we denote by $S^{k-}_j$ the intersection of the union of
the $Y$-stable manifolds of the critical points on level
$c_k$ with the hypersurface $\Sigma^{-}_j$. Similarly, 
for $i\leq j$ we denote by $S^{i+}_j$ the intersection of the union of
the $Y$-unstable manifolds of the critical points on level
$c_i$ with the hypersurface $\Sigma^{+}_j$; see Figure~\ref{fig:VW}. 
Set 
$$
   \bS^-_j:=\bigcup\limits_{k\geq j} S^{k-}_j,\qquad
   \bS^+_j:=\bigcup\limits_{i\leq j} S^{i+}_j.
$$ 
The assumption that the Smale cobordism $(Y,\phi)$ is elementary
implies that $\bS^\pm_j$ is a union of spheres in
$\Sigma_j^\pm$. 
  
Consider on $\bigcup_{j=1}^{N}W_j$ the gradient-like vector fields
$Y_t:=(1-t)Y+tX$, $t\in[0,1]$, for $\phi$. Let us pick $\eps$ so small
that for all $t\in[0,1]$ the $Y_t$-unstable spheres in $\Sigma_j^+$ of
the critical points on level $c_j$ do not intersect the $Y$-stable
spheres in $\Sigma_j^+$ of any critical points on higher levels.  
By Lemma~\ref{lem:cone} we can extend the $Y_t$ to gradient-like
vector fields for $\phi$ on $W$ such that $Y_0=Y$ and $Y_t=Y$ outside 
$\Op\bigcup_{j=1}^{N }W_j$ for all $t\in[0,1]$.
By Lemma~\ref{lm:Morse-Serre}, 
this can be done in such a way that the intersection of the
$Y_t$-stable manifold of the critical point locus on level $c_i$ with
the hypersurface $\Sigma^{+}_j$ remains unchanged. This implies that the 
cobordisms $(W,Y_t,\phi)$ are elementary for all $t\in[0,1]$. 
After renaming $Y_1$ back to $Y$ and shrinking the $W_j$, we may hence
assume that $Y=X$ on $\Op\bigcup_{j=1}^{N }W_j$. Moreover, after
modifying $Y$ near $\p W$ we may assume that $Y=X$ on $\Op\p W$. 

We will construct the required homotopies $X_t$, $t\in[0,1]$, and
$\om_t$, $t\in[0,\frac12]$, separately on each $V_j$, $j=1,\dots,N-1$,
in such a way
that $X_t$ is fixed near $\p V_j$ for all $t\in[0,1]$ and $\omega_t$
is fixed up to scaling near $\p V_j$ for $t\in[0,\frac12]$. This will
allow us then to extend the homotopies $X_t$ and $\om_t$ to  
$\bigcup_{j=1}^{N }W_j$ as constant (resp.~constant up to scaling). 
%

\begin{step2}
Consider $V_j$ for $1\leq j\leq N-1$. To simplify the notation, we
denote the restriction of objects to $V_j$ by the 
same symbol as the original objects, omitting
the index $j$. Let us denote by $\XX(V_j,\phi)$
the space of all gradient-like vector fields for $\phi$ on $V_j$ that
agree with $X$ near $\p V_j$. We connect $X$ and $Y$ by the path
$Y_t:=(1-t)X+tY$ in $\XX(V_j,\phi)$. 

Denote by $\Gamma_{Y_t}:\Sigma_{j+1}^-\to\Sigma_j^+$ the holonomy of
the vector field $Y_t$ on $V_j$ and consider the isotopy
$g_t:=\Gamma_{Y_t}|_{\bS^-_{j+1}}: \bS^-_{j+1}\into\Sigma^+_j$. 
Suppose for the moment that {\it $\bS^-_{j+1}\subset\Sigma_{j+1}^-$ is
isotropic and loose} (this hypothesis will be satisfied below when we
perform induction on descending values of $j$).   

Since $\Gamma_{Y_0}=\Gamma_X$ is a contactomorphism, this implies that
the embedding $g_0$ is loose isotropic. Hence, by
Theorem~\ref{thm:h-isotropic-subcrit} for the subcritical case,
Theorem~\ref{thm:loose-3} for the Legendrian overtwisted case in
dimension $4$, and Theorem~\ref{thm:loose} in the Legendrian loose
case in dimension $2n>4$, the isotopy $g_t$ can be 
$C^0$-approximated by an isotropic isotopy. 
More precisely, there exists a $C^0$-small diffeotopy 
$\delta_t:\Sigma_j^+\to\Sigma_j^+$ with $ \delta_0=\Id$ such that 
$\delta_t\circ g_t$, $t\in[0,1]$, is loose isotropic with respect to the
contact structure $\xi_j^+$. 

The path $\Gamma_{Y_t}$, $t\in[0,1]$, in 
$\Diff(\Sigma_{j+1}^-,\Sigma_j^+)$ is homotopic with fixed endpoints
to the concatenation of the paths $\delta_t\circ\Gamma_{Y_t}$ (from
$\Gamma_{Y_0}$ to $\delta_1\circ\Gamma_{Y_1}$) and
$\delta_t^{-1}\circ\delta_1\circ \Gamma_{Y_1}$ (from
$\delta_1\circ\Gamma_{Y_1}$ to $\Gamma_{Y_1}$). Hence by
Lemma~\ref{lm:Morse-Serre} we find paths $Y'_t$ and $Y''_t$,
$t\in[0,1]$, in $\XX(V_j,\phi)$ such that 
\begin{itemize}
\item $Y'_0=X$, $Y'_1=Y''_0$ and $Y''_1=Y$;
\item $\Gamma_{Y'_t}=\delta_t\circ\Gamma_{Y_t}$ and $\Gamma_{Y''_t}=
  \delta_t^{-1}\circ\delta_1\circ \Gamma_{Y_1}$, $t\in[0,1]$. 
\end{itemize}
Note that $\Gamma_{Y'_t}|_{\bS^-_{j+1}}$ is loose isotropic. Moreover, by
choosing $\delta_t$ sufficiently $C^0$-small, we can ensure that
$\Gamma_{Y''_t}(\bS^-_{j+1})\cap \bS^+_{j} = \varnothing$ in
$\Sigma_j^+$ and $\Gamma_Y(\bS^-_{j+1})$ is loose in
$\Sigma_j^+\setminus \bS_j^+$. We extend the vector fields $Y'_t$ and
$Y''_t$ to $W$ by setting $Y'_t:=(1-t)X+tY$ and $Y''_t:=Y$ on
$W\setminus V_j$.  
The preceding discussion shows that the cobordisms $(W,Y''_t,\phi)$
are elementary for all $t\in[0,1]$. Hence it is sufficient to prove
the lemma with the original vector field $Y$ replaced by
$Y'_1=Y''_0$. To simplify the notation, we rename $Y'_1$ to $Y$
and the homotopy $Y'_t$ to $Y_t$. The new homotopy now has the
property that the isotopy $\Gamma_{Y_t}|_{\bS^-_{j+1}}: 
\bS^-_{j+1}\into\Sigma^+_j$ is loose isotropic and
$\Gamma_Y(\bS^-_{j+1})$ is loose in $\Sigma_j^+\setminus \bS_j^+$. 
So the image of $\Gamma_Y(\bS^-_{j+1})$ under the holonomy of the
elementary Weinstein cobordism $(W_j,\om,X=Y,\phi)$ is loose isotropic
in $\Sigma_j^-$. Since the union $S_j^-$ of the stable spheres of
$(W_j,Y)$ are loose by the flexibility hypothesis on $\fW$, this
implies that $\bS^-_j\subset\Sigma_j^-$ is loose isotropic. 

Now we perform this construction inductively in {\em descending} order
over $V_j$ for 
$j=N-1,\, N-2,\dots,1$, always renaming the new vector fields back to
$Y$. The resulting vector field $Y$ is then connected to $X$ by a
homotopy $Y_t$ such that the manifolds
$\bS^-_{j+1}\subset\Sigma_{j+1}^-$ and the isotopies
$\Gamma_{Y_t}|_{\bS^-_{j+1}}:\bS^-_{j+1}\into\Sigma^+_j$,
$t\in[0,1]$, are loose isotropic for all $j=1,\dots,N-1$. 
\end{step2}

\begin{step3}
Let $Y$ and $Y_t$ be as constructed in Step 2. Now we construct the
desired homotopies $X_t$ and $\om_t$ separately on each $V_j$,
$j=1,\dots,N-1$, keeping them fixed near $\p V_j$. 
We keep the notation from Step 2. By the contact isotopy extension
theorem, we can extend the isotropic isotopy 
$\Gamma_{Y_t}|_{\bS^-_{j+1}}:\bS^-_{j+1}\into\Sigma^+_j$ to a contact
isotopy $G_t:(\Sigma^-_{j+1},\xi^-_{j+1})\to (\Sigma^+_{j},\xi^+_{j})$
starting at $G_0=\Gamma_{Y_0}=\Gamma_X$.  
By Lemma~\ref{lm:Weinstein-Serre}, we find a Weinstein homotopy $\wt
\fW_t=(V_j,\wt\om _t,\wt X_t,\phi )$ beginning at $\wt\fW_0=\fW$
with holonomy $\Gamma_{\wt\fW_t}=G_t$ for all $t\in[0,1]$.    
%
%
Now Lemma~\ref{lm:fixing-A-Morse} provides a path
$X_t\in\XX(V_j,\phi)$ such that  
\begin{enumerate}
\item $X_t=\wt X_{2t}$ for $t\in[0,\frac12]$;
\item $X_1=Y_1=Y$;
\item $\Gamma_{X_t}(\bS^-_{j+1})=\Gamma_{Y}(\bS^-_{j+1})$ for
  $t\in[\frac12,1]$. 
\end{enumerate}
Over the interval $[0,\frac12]$ the Smale homotopy $\fS_t =(V_j,
X_t,\phi)$ can be lifted to the Weinstein homotopy $\fW_t=(V_j,\om_t,
X_t,\phi)$, where $\om_t:=\wt\om_{2t}$. 
  
Condition (iii) implies that $\Gamma_{X_t}(\bS_{j+1}^{-})\cap\bS_j^{+}
= \varnothing$ for all $t\in[\frac12,1]$, so the resulting Smale
homotopy on $W$ is elementary over the interval
$[\frac12,1]$. \qedhere 
\end{step3}
\end{proof}
 
The following lemma is the analogue of Lemma~\ref{lm:W-interpolation}
in the case that the Smale cobordism $(W,Y,\phi)$ is not elementary,
but has exactly two critical points connected by a unique trajectory. 

\begin{lemma}
\label{lm:W-interpolation-II}
Let $\fW=(W,\om,X,\phi)$ be a flexible Weinstein cobordism
and $Y$ a gradient-like vector field for $\phi$. Suppose that 
the function $\phi$ has exactly two critical points connected by a
unique $Y$-trajectory along which the stable and unstable manifolds
intersect transversely.  
Then there exists a family $X_t$, $t\in[0,1]$, of gradient-like vector
fields for $\phi$ and a family $\omega_t$, $t\in[0,\frac12]$, of symplectic
forms on $W$ such that 
\begin{itemize}
\item $\fW_t=(W,\om_t,X_t,\phi)$, $t\in[0,\frac12]$, is a homotopy
  with $\fW_0=\fW$, fixed on $\Op\p_-W$ and fixed up to scaling on
  $\Op\p_+W$;  
\item $X_1=Y$ and for $t\in[\frac12,1]$ the critical points of the
  function $\phi$ are connected by a unique $X_t$-trajectory. 
\end{itemize}
\end{lemma}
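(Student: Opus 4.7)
The plan is to follow the three-step pattern of the proof of Lemma~\ref{lm:W-interpolation}, restricted to the single subcobordism between the two critical values $c_1 = \phi(q) < c_2 = \phi(p)$, while maintaining the unique-trajectory condition throughout. Partition $W$ as $W_1 \cup V \cup W_2$ with $W_j$ small neighborhoods of the critical levels and $V = \{c_1 + \eps \le \phi \le c_2 - \eps\}$ free of critical points, and write $\Sigma_j^\pm = \{\phi = c_j \pm \eps\}$. Let $S_1^+ \subset \Sigma_1^+$ be the unstable sphere of $q$ and $S_2^- \subset \Sigma_2^-$ the stable sphere of $p$; the unique-trajectory hypothesis on $Y$ translates to $\Gamma_Y(S_2^-) \cap S_1^+$ consisting of a single transverse point. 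Copying Step~1 of the proof of Lemma~\ref{lm:W-interpolation} and using Lemmas~\ref{lem:cone} and~\ref{lm:Morse-Serre}, replace $Y$ by a gradient-like vector field for $\phi$ that agrees with $X$ on $\Op\p W$ and on $\Op(W_1 \cup W_2)$ while still satisfying the unique-trajectory condition. It then suffices to construct the desired families of vector fields and symplectic forms on $V$ with fixed boundary data.

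For the Weinstein portion $t \in [0, \frac{1}{2}]$, exploit the flexibility of $\fW$: the sphere $S_2^-$ is loose isotropic in $(\Sigma_2^-, \xi_2^-)$, so $\Gamma_X|_{S_2^-}$ is a loose isotropic embedding into $(\Sigma_1^+, \xi_1^+)$. Using the smooth isotopy $\Gamma_{(1-s)X + sY}|_{S_2^-}$, transport the canonical formal isotropic structure on $\Gamma_X|_{S_2^-}$ to a formal isotropic structure on $\Gamma_Y|_{S_2^-}$. Apply the appropriate $h$-principle (Theorem~\ref{thm:loose} in the Legendrian case for $n > 2$, Theorem~\ref{thm:h-isotropic-subcrit} in the subcritical range, or Theorem~\ref{thm:loose-3} when $2n = 4$ and $\p_-W$ is overtwisted) to produce a loose isotropic embedding $f : S_2^- \hookrightarrow \Sigma_1^+$ that is formally isotropically isotopic to $\Gamma_X|_{S_2^-}$ and $C^0$-close to $\Gamma_Y|_{S_2^-}$; by openness of transverse intersections, $f(S_2^-)$ then meets $S_1^+$ in a single transverse point. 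Extend the resulting isotropic isotopy from $\Gamma_X|_{S_2^-}$ to $f$ to a contact isotopy $\psi_s$ of $(\Sigma_1^+, \xi_1^+)$ via the contact isotopy extension theorem, and set $G_s := \psi_s \circ \Gamma_X$. By Lemma~\ref{lm:Weinstein-Serre} the path $G_s$ lifts to a Weinstein homotopy on $V$; after reparametrizing $s = 2t$ this yields $\fW_t$, $t \in [0, \frac{1}{2}]$, whose Liouville field $X_{1/2}$ admits exactly one trajectory between $q$ and $p$.

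For $t \in [\frac{1}{2}, 1]$, drop the Weinstein constraint and interpolate in the space of gradient-like vector fields for $\phi$ on $V$. The embeddings $\Gamma_{X_{1/2}}|_{S_2^-}$ and $\Gamma_Y|_{S_2^-}$ are smoothly isotopic through embeddings each meeting $S_1^+$ transversely in a single point; such an isotopy can be obtained by composing the smooth isotopy $\Gamma_{(1-u)X_{1/2} + uY}|_{S_2^-}$ (which is a path of embeddings by Lemma~\ref{lem:cone}) with a small ambient diffeotopy of $\Sigma_1^+$ that traces the single intersection point. Lift this isotopy to a path of gradient-like vector fields on $V$ via Lemma~\ref{lm:Morse-Serre}, and apply Lemma~\ref{lm:fixing-A-Morse} with $A$ equal to the intersection point of $\Gamma_Y(S_2^-)$ with $S_1^+$ to arrange that the endpoint of the path equals $Y$ on $V$ while $\Gamma_{X_t}(S_2^-) \cap S_1^+$ remains a single transverse point throughout $t \in [\frac{1}{2}, 1]$.

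The principal obstacle is the construction of $f$ in the second paragraph: it must simultaneously be $C^0$-close to $\Gamma_Y|_{S_2^-}$ (to inherit the single transverse intersection with $S_1^+$) and lie in the same formal isotropic homotopy class as $\Gamma_X|_{S_2^-}$ (so that an isotropic isotopy between them exists and, after extension to an ambient contact isotopy, lifts to a Weinstein homotopy via Lemma~\ref{lm:Weinstein-Serre}). The $C^0$-closeness is provided by the $C^0$-approximation clause of Murphy's $h$-principle, while the formal class matches because the formal isotropic structure on $\Gamma_Y|_{S_2^-}$ was defined precisely as the transport along the smooth isotopy $\Gamma_{(1-s)X+sY}|_{S_2^-}$ of the trivial formal structure on the genuinely isotropic embedding $\Gamma_X|_{S_2^-}$.
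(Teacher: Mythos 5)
Your overall strategy matches the paper's: split off the critical-point-free cobordism $V$ between the two critical levels, reduce to the case $Y=X$ on $\Op(W_1\cup W_2)$, work with the stable sphere of the upper critical point and the unstable sphere of the lower one in $\Sigma := \Sigma_1^+$, and split the homotopy into a Weinstein part on $[0,\frac12]$ and a Smale part on $[\frac12,1]$. However, there is a genuine gap at the key step.

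In your second paragraph you claim that because $f$ is $C^0$-close to $\Gamma_Y|_{S_2^-}$, ``by openness of transverse intersections, $f(S_2^-)$ then meets $S_1^+$ in a single transverse point.'' This is false: transversality and the number of intersection points are open in the $C^1$-topology, not the $C^0$-topology, and Murphy's $h$-principle (Theorem~\ref{thm:loose}) only provides $C^0$-closeness. Indeed, one cannot hope for $C^1$-closeness in this situation — a loose Legendrian is necessarily ``wiggly'' and cannot $C^1$-approximate a general embedding. A $C^0$-small perturbation of $\Gamma_Y(S_2^-)$ can create arbitrarily many new intersections with $S_1^+$ and destroy transversality. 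The paper circumvents this by first $C^0$-deforming $S_1 = \Gamma_Y(S_2^-)$, keeping the unique transverse intersection point $q$ with $S^+$, to a sphere $S_1'$ which is genuinely isotropic \emph{near} $q$; and then applying the $h$-principle in its relative form, demanding that the resulting isotropic isotopy $\widetilde S_t$ end at a sphere $\widetilde S_1$ that \emph{coincides} with $S_1'$ on a neighborhood of $q$. Only then does the argument go through: exact coincidence near $q$ gives the unique transverse intersection there, while $C^0$-closeness away from $q$ keeps $\widetilde S_1$ disjoint from $S^+$ outside that neighborhood. This ``make it isotropic near the intersection point, then apply the $h$-principle relatively'' step is the key idea your proposal is missing.

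A secondary problem occurs in your third paragraph: you assert that the path $\Gamma_{(1-u)X_{1/2}+uY}|_{S_2^-}$ is a path of embeddings each meeting $S_1^+$ transversely in a single point, possibly after composing with a small ambient diffeotopy. There is no reason for this: the holonomy of a convex combination of vector fields is not controlled by the holonomies of the endpoints, so for intermediate $u$ the image could intersect $S_1^+$ in many points or fail to be transverse, and a small ambient diffeotopy cannot repair this. In the paper, the $[\frac12,1]$ portion is handled by first replacing $Y$ with a nearby gradient-like field whose stable sphere in $\Sigma$ \emph{equals} $\widetilde S_1$ (analogous to the $Y'$, $Y''$ replacement in Step~2 of Lemma~\ref{lm:W-interpolation}), with the replacement path preserving the unique-trajectory property because it is $C^0$-small and fixed near $q$; and then applying Lemma~\ref{lm:fixing-A-Morse} with $A = S_2^-$ (the whole stable sphere, not a single point) so that $\Gamma_{X_t}(S_2^-)$ stays constant equal to $\widetilde S_1$ for $t\in[\frac12,1]$.
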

 
\begin{proof}
Let us denote the critical points of the function $\phi$ by $p_1$ and
$p_2$ and the corresponding critical values by $c_1<c_2$. As in the
proof of Lemma~\ref{lm:W-interpolation}, for sufficiently small
$\eps>0$, we split the cobordism $W$ into three parts:  
\begin{gather*}
   W_1:=\{\phi\leq c_1+\eps\},\quad
   V:=\{c_1+\eps\leq \phi\leq c_2-\eps\},\quad 
   W_2:=\{\phi\geq c_2-\eps\}. 
\end{gather*} 
Arguing as in Step 1 of the proof of Lemma~\ref{lm:W-interpolation}, we
reduce to the case that $Y=X$ on $\Op(W_1\cup W_2)$.   

On $V$ consider the gradient-like vector fields $Y_t:=(1-t)X+tY$ for
$\phi$. Let $\Sigma :=\{\phi=c_1+\eps\}=\p_-V$. 
Denote by $S_t\subset\Sigma$ the $Y_t$-stable sphere of $p_2$ and by
$S^+\subset\Sigma$ the $Y$-unstable sphere of $p_1$. Note that $S^+$
is coisotropic, $S_0$ is isotropic, and $S_1$ intersects $S^+$
transversely in a unique point $q$. We deform $S_1$ to $S_1'$ by a
$C^0$-small deformation, keeping the unique transverse intersection
point $q$ with $S^+$, such that $S_1'$ is isotropic near $q$. Connect
$S_0$ to $S_1'$ by an isotopy $S_t'$ which is $C^0$-close to $S_t$. 
Due to the flexibility hypothesis on $\fW$, the isotropic sphere
$S_0'=S_0$ is loose. Hence by Theorems~\ref{thm:h-isotropic-subcrit},
~\ref{thm:loose-3},
 and~\ref{thm:loose} 
we can $C^0$-approximate $S_t'$ by an isotropic isotopy $\wt S_t$ such
that $\wt S_0=S_0'=S_0$, and $\wt S_1$ coincides with $S_1'$ near
$q$. In particular, $\wt S_1$ has $q$ as the unique transverse
intersection point with $S^+$.   
Arguing as in Steps 2 and 3 of the proof of
Lemma~\ref{lm:W-interpolation},  we now construct a Weinstein homotopy 
$\fW_t=(V,\om_t, X_t,\phi)$, $t\in[0,\frac12]$, fixed near $\p_-V$ and
fixed up to scaling near $\p_+V$, and Smale cobordisms
$(V,X_t,\phi)$, $t\in[\frac12,1]$, fixed near $\p V$, such that 
\begin{itemize}
\item $\fW_0=\fW|_{V}$ and $X_1=Y|_{V}$;
\item the $X_t$-stable sphere of $p_2$ in $\Sigma$ equals $\wt S_{2t}$ for
  $t\in[0,\frac12]$, and $\wt S_1$ for $t\in[\frac12,1]$.
\end{itemize}
In particular, for $t\in[\frac12,1]$ the $X_t$-stable sphere of $p_2$
in $\Sigma$ intersects $S^+$ transversely in the unique point $q$, so  
the two critical points $p_1,p_2$ are connected by a unique
$X_t$-trajectory for $t\in[\frac12,1]$.  
\end{proof}

The following lemma will serve as induction step in proving
Theorem~\ref{thm:W-hom1}. 

\begin{lemma}\label{lm:W-def-elem}
Let $\fW=(W,\om,X,\phi)$ be a flexible Weinstein cobordism of
dimension $2n$. Let $\fS_t=(W,Y_t,\phi_t)$, $t\in[0,1]$, be an {\em
  elementary} Smale homotopy without critical points of index $>n$
such that $\phi_0=\phi$ on $W$ and $\phi_t=\phi$ near $\p W$ (but not
necessarily $Y_0=X$!).  
If $2n=4$ and $\fS_t$ is of Type IIb assume that either $\p_-W$ is
overtwisted, or $\phi_t$ has no critical points of index $>1$. 
Then there exists a homotopy $\fW_t=(W,\om_t,X_t,\phi_t)$,
$t\in[0,1]$, of flexible Weinstein structures, starting at  
$\fW_0=\fW$, which is fixed near $\p_-W$ and fixed up to scaling near
$\p_+W$. 
\end{lemma}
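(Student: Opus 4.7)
The proof will proceed by cases on the type of the elementary Smale homotopy $\fS_t=(W,Y_t,\phi_t)$. The common strategy is first to modify $\fW$ via a flexible Weinstein homotopy so that its Liouville field agrees with $Y_0$, and then to lift the remainder of $\fS_t$ to a Weinstein homotopy using a parametric version of the proof of Theorem~\ref{thm:Weinstein-existence}.

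For Type I, the Smale cobordism $(W,Y_0,\phi_0)$ is elementary, so Lemma~\ref{lm:W-interpolation} (with $Y:=Y_0$) produces a Weinstein homotopy on $[0,\frac12]$ followed by a Smale-only homotopy on $[\frac12,1]$ ending with Liouville field $Y_0$ and with all intermediate Smale cobordisms elementary; after reparametrization one may assume $X=Y_0$. Next I implement the parametric analogue of the three steps of Theorem~\ref{thm:Weinstein-existence}: (i) invoking the parametric totally-real-attachment result Theorem~\ref{thm:real-handle-subcrit} together with Murphy's parametric $h$-principle (Theorem~\ref{thm:loose}(b), or Theorem~\ref{thm:loose-3}(b) when $2n=4$), I arrange rel $t=0$ that the stable discs $\Delta_t$ of $\phi_t$ under $Y_t$ become $J_t$-totally real, $J_t$-orthogonally attached to $\p_-W$, and with loose Legendrian attaching spheres; (ii) Lemma~\ref{lm:prescribing-W-skel-param} converts the nondegenerate $2$-forms $\om_t$ into Liouville structures in a parametric neighborhood of $\p_-W\cup\Delta_t$; (iii) a parametric push-down along the flow of $Y_t$ extends the Liouville structure to all of $W$, producing the desired flexible Weinstein homotopy $\fW_t=(W,\om_t,X_t,\phi_t)$.

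For Type IIb, $\phi_0$ has no critical points, so $\fW$ is a critical-point-free Weinstein cobordism and is trivially flexible; I apply Proposition~\ref{prop:creation} at the parameter value $t_0$ to create a pair of critical points of the required indices $i,i-1$, then adjust the profile and holonomy by Lemmas~\ref{lem:prescribe-profile}, \ref{lm:Morse-profiles}, and~\ref{lm:Weinstein-Serre} so that the creation family matches $\fS_t$; the post-creation interval $[t_0,1]$ is then of Type I and the previous case finishes the job. Flexibility of the newly born index $i$ handle is automatic for $i<n$; for $i=n>2$ I combine Murphy's $h$-principle (Theorem~\ref{thm:loose}(a)) with the stabilization Proposition~\ref{prop:stab} to make the new attaching sphere loose; in dimension $2n=4$ the hypothesis (either overtwistedness of $\p_-W$ or $i\leq 1$) supplies flexibility directly. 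For Type IId, $\phi_0$ has two critical points connected by a unique $Y_0$-trajectory, so Lemma~\ref{lm:W-interpolation-II} (with $Y:=Y_0$) replaces $X$ by $Y_0$ via a Weinstein homotopy followed by a Smale homotopy preserving the unique-trajectory configuration; Proposition~\ref{prop:cancellation} applied near $t_0$ then produces the cancellation family matching $\fS_t$.

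The main obstacle throughout is preserving flexibility at every step: whenever an attaching sphere is modified it must remain loose, which requires the full force of the parametric $h$-principles (Theorems~\ref{thm:h-isotropic-subcrit}(b),~\ref{thm:loose-3}(b),~\ref{thm:loose}(b)). The most delicate point is the dimension $4$ Type IIb case, where the overtwistedness of $\p_-W$ must be preserved throughout the creation of index $2$ handles, so the creation construction must be localized in an overtwisted neighborhood of $\p_-W$ without ever passing through a tight configuration.
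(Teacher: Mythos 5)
The overall skeleton (reduce to $X=Y_0$ via Lemma~\ref{lm:W-interpolation} resp.\ Lemma~\ref{lm:W-interpolation-II}, then lift the rest of the Smale homotopy) matches the paper, and your Type IId case is essentially correct. But there are two genuine gaps.

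In the Type~I case, after reducing to $X=Y_0$, you propose to redo the three steps of the Weinstein existence proof parametrically, invoking Theorem~\ref{thm:real-handle-subcrit}, a parametric $h$-principle, and Lemma~\ref{lm:prescribing-W-skel-param}. This does not work here: Lemma~\ref{lm:prescribing-W-skel-param} (and likewise Theorem~\ref{thm:real-handle-subcrit}) requires Liouville/totally-real data at \emph{both} ends $t=0$ and $t=1$ of the family, and in Lemma~\ref{lm:W-def-elem} you are only given a Weinstein structure at $t=0$. That machinery is what powers Lemma~\ref{lem:Weinstein-homotopy-elem} in the proof of the \emph{second} deformation theorem, where two Weinstein endpoints are given. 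For the first deformation theorem the paper's argument is far simpler and does not need any $h$-principle at this step: once $X=Y_0$, use Lemma~\ref{lem:prescribe-profile} to produce Lyapunov functions $\wt\phi_t$ for the fixed $X$ with the same profile as $\phi_t$, then Lemma~\ref{lm:Morse-profiles} gives a diffeotopy $h_t$ with $\phi_t=\wt\phi_t\circ h_t$, and $\fW_t=(h_t^*\om,h_t^*X,h_t^*\wt\phi_t)$ is the desired lift; flexibility is preserved because the attaching spheres are moved by an isotropic isotopy.

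In the Type~IIb case, your flexibility argument has the causality backwards. You suggest applying Murphy's $h$-principle and the stabilization Proposition~\ref{prop:stab} ``to make the new attaching sphere loose,'' but Proposition~\ref{prop:creation} already produces a specific Weinstein homotopy whose flexibility has to be \emph{verified}, not imposed after the fact; you cannot modify the attaching sphere without destroying the Weinstein structure you just built. The paper's point is that no modification is needed: the closure $\Delta$ of the stable manifold of the newly born index-$n$ point $p$ meets $\p_-W$ along a \emph{Legendrian disc}, and by Remark~\ref{rem:loose-knots}(1) every Legendrian disc is automatically loose (more precisely, $\p_-\Delta\setminus S_q^-$ is loose in $\p_-W\setminus S_q^-$). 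Flowing by $X_1$ to a level set $\Sigma$ between the two critical values transports this looseness to $\Delta\cap\Sigma$, giving flexibility of $\fW_t$ for $t\ge t_0$ for free. This observation is the heart of the Type~IIb case and is missing from your proposal. The dimension-$4$ remark you add is in the right spirit but would need the same mechanism (localize the creation so that $\p_-\Delta$ has an overtwisted disc in its complement) to be made precise.
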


\begin{proof}
{\bf Type I. } 
Consider first the case when the homotopy $\fS_t$ is elementary of
Type I. We point out that $(W,X,\phi)$ need not be elementary. To
remedy this, we apply Lemma~\ref{lm:W-interpolation} to construct
families $X_t$ and $\om_t$ such that 
\begin{itemize}
\item $\fW_t=(W,\om_t,X_t,\phi)$, $t\in[0,\frac12]$, is a 
  Weinstein homotopy with $\fW_0=\fW$, fixed on $\Op\p_-W$ and fixed
  up to scaling on $\Op\p_+W$;  
\item $X_1=Y_0$ and the Smale cobordisms $(W,X_t,\phi)$,
  $t\in[\frac12,1]$, are elementary.  
\end{itemize}
Thus it is sufficient to prove the lemma for the Weinstein cobordism
$(\om_{\frac12},X_{\frac12},\phi)$ instead of $\fW$, and the
concatenation of the Smale homotopies $(X_t,\phi)_{t\in[\frac12,1]}$
and $(Y_t,\phi_t)_{t\in[0,1]}$ instead of $(Y_t,\phi_t)$.   
To simplify the notation, we rename the new Weinstein cobordism and
Smale homotopy back to $\fW=(\om,X,\phi)$ and $(Y_t,\phi_t)$. So in
the new notation we now have $X=Y_0$.  
 
According to Lemma~\ref{lem:prescribe-profile}
there exists a family $\wt\phi_t$, $t\in[0,1]$, of Lyapunov functions for
$X$ with the same profile as the family $\phi_t$, and such that
$\wt\phi_0=\phi$ and $\wt\phi_t=\phi_t $ on $\Op\p W$.  
Then Lemma~\ref{lm:Morse-profiles} 
provides a diffeotopy $h_t:W\to W$, $t\in[0,1]$, such that $h_0=\Id$,
$h_t|_{\Op\p W}=\Id$, and $\phi_t=\wt\phi_t\circ h_t$ for all
$t\in[0,1]$. Thus the Weinstein homotopy $(W,\om_t=h_t^*\om, X_t=
h_t^*X, \phi_t=h_t^*\wt\phi_t)$, $t\in[0,1]$, has the desired
properties. It is flexible because the $X_t$-stable spheres in $\p_-W$
are loose for $t=0$ and moved by an isotropic isotopy, so they remain
loose for all $t\in[0,1]$. 
 
{\bf Type IId. }
Suppose now that the homotopy $\fS_t$ is of Type IId. Let
$t_0\in[0,1]$ be the parameter value for which the function $\phi_t$
has a death-type critical point. In this case the function $\phi$ has
exactly two critical points $p$ and $q$ connected by a unique
$Y_0$-trajectory. Arguing as in the Type I case, using
Lemma~\ref{lm:W-interpolation-II} instead of
Lemma~\ref{lm:W-interpolation}, we can again reduce to the case that
$X=Y_0$. 
       
Then Proposition~\ref{prop:cancellation} provides an elementary
Weinstein homotopy $(W,\om,\wt X_t,\wt\phi_t)$ of Type IId
starting from $\fW$ and killing the critical points $p$ and $q$ at
time $t_0$. One can also arrange that $(\wt X_t,\wt\phi_t)$ coincides
with $(X,\phi)$ on $\Op\p W$, and (by composing $\wt\phi_t$ with
suitable functions $\R\to\R$) that the homotopies $\wt\phi_t$ and
$\phi_t$ have equal profiles. Then Lemma~\ref{lm:Morse-profiles} 
provides a diffeotopy $h_t:W\to W$, $t\in[0,1]$, such that $h_0=\Id$,
$h_t|_{\Op\p W}=\Id$, and $\phi_t=\wt\phi_t\circ h_t$ for all
$t\in[0,1]$. Thus the Weinstein homotopy  $(W,\om_t=h_t^*\om, X_t=
h_t^*\wt X, \phi_t=h_t^*\wt\phi_t)$, $t\in[0,1]$, has the desired
properties. It is flexible because the intersections of the
$X_t$-stable manifolds with regular level sets remain
loose for $t\in[0,t_0]$ and there are no critical points for $t>t_0$. 

{\bf Type IIb. }The argument in the case of Type IIb is similar,
except that we use Proposition~\ref{prop:creation} instead of
Proposition~\ref{prop:cancellation} and we do not need a preliminary 
homotopy.
However, the flexibility of $\fW_t$ for $t\geq t_0$ requires an
additional argument. 

Consider first the case $2n>4$. 
Suppose $\phi_1$ has critical points $p$ and $q$ of index $n$ and
$n-1$, respectively (if they have smaller indices flexibility is
automatic). Then the closure $\Delta$ of the $X_1$-stable
manifold of the point $p$ intersects $\p_-W$ along a Legendrian disc
$\p_-\Delta$
The boundary $S_q^-$
of this disc is the intersection with $\p_-W$ of the $X_1$-stable
manifold $D_q^-$ of $q$. According to Remark~\ref{rem:loose-knots}(1)
all Legendrian discs are loose, or more precisely, 
$\p_-\Delta\setminus S_q^-$ is loose in $\p_-W\setminus S_q^-$. 
Let $c$ be a regular value of $\phi_1$ which separates
$\phi_1(q)$ and $\phi_1(p)$ and consider the level set
$\Sigma:=\{\phi_1=c\}$. Flowing along $X_1$-trajectories
defines a contactomorphism $\p_-W\setminus
S_q^-\to\Sigma\setminus D_q^+$ mapping $\p_-\Delta\setminus
S_q^-$ onto $\Delta\cap\Sigma\setminus\{r\}$, where $r$ is the
unique intersection point of $\Delta$ and the $X_1$-unstable manifold
$D_q^+$ in the level set $\Sigma$. It follows that
$\Delta\cap\Sigma\setminus\{r\}$ is loose in $\Sigma\setminus\{r\}$,
and hence $\Delta\cap\Sigma$ is loose in $\Sigma$. 
This proves flexibility of $\fW_1$, and thus of $\fW_t$ for $t\geq
t_0$. 

Finally, consider the case $2n=4$. If the critical points have indices
$1$ and $0$, flexibility is automatic. If they have indices $2$ and $1$
and $\p_-W$ is overtwisted, we can arrange that
$\p_-\Delta\subset\p_-W$ (in the notation above) has an
overtwisted disc in its complement, hence so does the intersection
of $\Delta$ with the regular level set $\{\phi=c\}$.  
\end{proof} 
\medskip
  
\begin{proof}[Proof of Theorem~\ref{thm:W-hom1}]
Let us pick gradient-like vector fields $Y_t$ for $\phi_t$ with
$Y_0=X$ and $Y_t=X$ near $\p W$ to get a Smale homotopy
$\fS_t=(W,Y_t,\phi_t)$, $t\in[0,1]$.  
By Lemma~\ref{lm:admiss-homotopy} we find an admissible partition for
the Smale homotopy $\fS_t$. Thus we get a sequence
$0=t_0<t_1<\dots<t_p=1$ of parameter values and smooth families of
partitions 
$$
   W = \bigcup\limits_{j=1}^{N_k} W^k_j(t), \qquad W^k_j(t):=\{c^k_{j-1}(t)\leq
   \phi_t\leq c^k_j(t)\}, \qquad t\in[t_{k-1},t_k],
$$
such that each Smale homotopy
$$
   \fS^k_j := \left(W^k_j(t),
     Y_t|_{W^k_j(t)},\phi_t|_{W^k_j(t)}\right)_{t\in[t_{k-1},t_k]}  
$$
is elementary.
We will construct the Weinstein homotopy $(\om_t,X_t,\phi_t)$ on the
cobordisms $\bigcup_{t\in[t_{k-1},t_k]}W^k_j(t)$ inductively over
$k=1,\dots, p$, and for fixed $k$ over $j=1,\dots, N_k$. 
  
Suppose the required Weinstein homotopy is already constructed on $W$
for $t\leq t_{k-1}$. 
To simplify the notation we rename $\phi_{t_{k-1}}$ to $\phi$, the
vector fields $X_{t_k}$ and $Y_{t_k}$ to $X$ and $Y$, and the
symplectic form $\om_{t_{k-1}}$ to $\om$. We also write $N$ instead of
$N_k$, $W_j$ and $W_j(t)$ instead of $W^k_j(t_{k-1})$ and $W^k_j(t)$,
and replace the interval $[t_{k-1},t_k]$ by $[0,1]$. 

There exists a diffeotopy $f_t:W\to W$, fixed on $\Op\p W$, with
$f_0=\Id$ and such that $f_t(W_j )=W_j(t)$ for all
$t\in[0,1]$. Moreover, we can choose $f_t$ and a diffeotopy
$g_t:\R\to\R$ with $g_0=\id$ such that the function $\wh
\phi_t:=g_t\circ\phi_t\circ f_t$ coincides with $\phi$ on $\Op\p W_j$
for all $t\in[0,1]$, $j=1,\dots,N$.
Set $\wh Y_t:=f_t^*Y_t$. 
So we have a flexible Weinstein cobordism
$\fW=(W=\bigcup_{j=1}^NW_j,\om,X,\phi=\wh\phi_0)$ and a Smale homotopy $(\wh
Y_t,\wh\phi_t)$, $t\in[0,1]$, whose restriction to each $W_j$ is
elementary. (But the restriction of $\fW$ to $W_j$ need not be
elementary.)  
 
Now we apply Lemma~\ref{lm:W-def-elem} inductively for $j=1,\dots, N$
to construct Weinstein homotopies $\wh\fW^j_t=(W_j,\wh\om_t,\wh
X_t,\wh\phi_t)$, fixed near $\p_-W_j$ and fixed up to scaling near
$\p_+W_j$, with $\wh\fW^j_0=\fW|_{W_j}$. Thus the $\fW^j_t$ fit
together to form a Weinstein homotopy $\wh\fW_t=(\wh\om_t,\wh
X_t,\wh\phi_t)$ on $W$. The desired Weinstein homotopy on $W$ is now
given by 
\begin{equation*}
   \fW_t := \left(f_{t*}\wh\om_t,f_{t*}\wh
     X_t,g_t^{-1}\circ\wh\phi_t\circ f_t^{-1}\right). \qedhere
\end{equation*}
\end{proof}

\subsection{Proof of the second Weinstein deformation theorem}
\label{sec:proof-W2}

Let us extend the vector fields $Y_t$ from $\Op\p_-W$ to a path of
gradient-like vector fields  for $\phi_t$ on $W$ connecting $X_0$ and
$X_1$. 
We will deduce Theorem~\ref{thm:W-hom2} from
Theorem~\ref{thm:W-hom1} and the following special
case, which is just a 1-parametric version of the Weinstein
Existence Theorem~\ref{thm:Weinstein-existence}. 

\begin{lemma}\label{lem:Weinstein-homotopy-elem}
Theorem~{\rm\ref{thm:W-hom2}} holds under the
additional hypothesis that $\phi_t=\phi$ is independent of $t\in[0,1]$
and the Smale homotopy $(W,Y_t,\phi)$ is elementary.   
\end{lemma}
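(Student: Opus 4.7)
The plan is to adapt the three-step proof of the Weinstein Existence Theorem~\ref{thm:Weinstein-existence} to this one-parametric setting, using the parametric versions of the key ingredients: Theorem~\ref{thm:real-handle-subcrit} (totally real discs attached along loose knots) and Lemma~\ref{lm:prescribing-W-skel-param} (parametric construction of Liouville structures near a skeleton). Because the endpoints $\fW_0,\fW_1$ are already flexible Weinstein, and because the Smale homotopy is elementary and the function $\phi_t=\phi$ is independent of $t$, the various auxiliary objects (stable discs, almost complex structures, etc.) organize into smooth $t$-families, and we only need to parametrically bridge the endpoints through Weinstein structures.

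First, extend the $Y_t$ from $\Op\p_-W$ to a smooth family of gradient-like vector fields for $\phi$ on $W$, with $Y_0=X_0$ and $Y_1=X_1$. Since $(W,Y_t,\phi)$ is elementary for each $t$, the stable manifolds form a smooth family of discs $\Delta_t$, and by flexibility of $\fW_0$ and $\fW_1$, the boundaries $\p\Delta_0,\p\Delta_1$ are loose Legendrian in the respective contact structures on $\p_-W$. Choose a smooth family of $\eta_t$-compatible almost complex structures $J_t$, normalized near $\p_-W$ so that at $t=0,1$ the discs $\Delta_0,\Delta_1$ are already $J_0$-, resp.\ $J_1$-totally real and $J_i$-orthogonally attached (arranged exactly as in Step~1 of the proof of Theorem~\ref{thm:Weinstein-existence}). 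Now apply Theorem~\ref{thm:real-handle-subcrit} to the isotopy of attached discs $\Delta_t$ to obtain a $2$-parameter family $f_t^s$ relative to $s=0$ and to $t=0,1$ whose terminal discs $\Delta_t':=f_t^1(D^k)$ are $J_t$-totally real and $J_t$-orthogonally attached, with $\p\Delta_t'$ loose Legendrian in the critical case $k=n$ (permitted by the looseness at the endpoints). By extending the isotopy $f_t^s$ via a diffeotopy compatible with $\phi$ and the flow near $\p_-W$, and replacing $Y_t,\eta_t$ by their pullbacks, we may assume that $\Delta_t$ itself is $\eta_t$-isotropic, $J_t$-totally real and $J_t$-orthogonally attached for all $t\in[0,1]$, while preserving the endpoint data.

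Next, apply Lemma~\ref{lm:prescribing-W-skel-param} to the elementary Smale homotopy $(W,Y_t,\phi)$ with the $\eta_t$-isotropic stable discs $\Delta_t$ and the family $\eta_t$; the hypothesis that $(\eta_t,Y_t)$ is Liouville on all of $W$ for $t=0,1$ is exactly the fact that $\fW_0,\fW_1$ are Weinstein. This yields a $2$-parameter family $(\eta_t^s,Y_t^s)$ with $(\eta_t^0,Y_t^0)=(\eta_t,Y_t)$, constant in $s$ at $t=0,1$, and such that $(\eta_t^1,Y_t^1)$ is Liouville on a neighborhood $U_t$ of $\p_-W\cup\Delta_t$. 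Finally, for each $t$ push down along trajectories of $Y_t^1$ by a family of embeddings $h_t^\tau:W\hookrightarrow W$ with $h_t^0=\Id$, fixed near $\p_-W\cup\Delta_t$, and with $h_t^1(W)\subset U_t$; this is compatible with the endpoint data because $\fW_0,\fW_1$ are Liouville on all of $W$, so the pushdown is trivial at $t=0,1$. Set $(\om_t,X_t):=((h_t^1)^*\eta_t^1,(h_t^1)^*Y_t^1)$; this is a Weinstein homotopy connecting $\fW_0$ and $\fW_1$, agreeing with $(\eta_t,Y_t,\phi)$ on $\Op\p_-W$, with stable spheres $\p\Delta_t$ loose in $\p_-W$, hence flexible. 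Concatenating the deformations of the $2$-forms used in the three steps produces a homotopy rel $\Op\p_-W$ with fixed endpoints between the path $\om_t$ and the given path $\eta_t$.

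The main obstacle is bookkeeping: ensuring that all the parametric choices (modifications of $\Delta_t$, of $J_t$, of $Y_t$, and of $\eta_t$) are performed \emph{relative} to the prescribed endpoint Weinstein structures $\fW_0,\fW_1$ and to the data $(\eta_t,Y_t,\phi)$ near $\p_-W$, so that the resulting Weinstein homotopy actually starts at $\fW_0$ and terminates at $\fW_1$ and the resulting homotopy class of $\om_t$ agrees with $\eta_t$. This is why we used the relative forms of Theorem~\ref{thm:real-handle-subcrit} and Lemma~\ref{lm:prescribing-W-skel-param}, both of which are designed to keep the $s$-deformation trivial at $t=0,1$; the looseness at the endpoints, provided by flexibility of $\fW_0,\fW_1$, is precisely what allows Theorem~\ref{thm:real-handle-subcrit} to be invoked in the critical index case.
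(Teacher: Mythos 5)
Your proof is correct and takes essentially the same approach as the paper: the paper's proof of Lemma~\ref{lem:Weinstein-homotopy-elem} is simply the remark that it is the $1$-parametric version of the proof of Theorem~\ref{thm:Weinstein-existence}, with Theorem~\ref{thm:real-handle} replaced by Theorem~\ref{thm:real-handle-subcrit} and Lemma~\ref{lm:prescribing-W-skel} replaced by Lemma~\ref{lm:prescribing-W-skel-param}. You have carried out exactly this parametric argument, carefully keeping all constructions relative to the prescribed endpoints $\fW_0,\fW_1$ and to the data $(\eta_t,Y_t,\phi)$ near $\p_-W$, which is what the paper leaves implicit.
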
 

\begin{proof} 
The proof is just a 1-parametric version of the proof of
Theorem~\ref{thm:Weinstein-existence}, using 
Theorem~\ref{thm:real-handle-subcrit} and
Lemma~\ref{lm:prescribing-W-skel-param}  
instead of Theorem~\ref{thm:real-handle} and
Lemma~\ref{lm:prescribing-W-skel}.  
\end{proof}

\begin{lemma}\label{lem:W-hom-coarse}
Theorem~{\rm\ref{thm:W-hom2}} holds under the
additional hypothesis that $\phi_t=\phi$ is independent of
$t\in[0,1]$. 
\end{lemma}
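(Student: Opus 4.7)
The plan is to reduce the statement to the elementary case already handled by Lemma~\ref{lem:Weinstein-homotopy-elem} by partitioning $W$ into subcobordisms each of which is elementary for the whole family $Y_t$. Since $\phi$ is independent of $t$, we choose regular values $c_0=\min\phi<c_1<\dots<c_N=\max\phi$ such that each subcobordism $W_j:=\{c_{j-1}\leq\phi\leq c_j\}$ contains critical points of $\phi$ on at most one critical level. Any gradient-like trajectory of any $Y_t$ must have $\phi$ strictly increasing, so it cannot connect two distinct critical points inside $W_j$; hence $(W_j,Y_t|_{W_j},\phi|_{W_j})$ is elementary for every $t\in[0,1]$. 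By the remark in Section~\ref{sec:flex} that refinements of flexible partitions remain flexible, the restrictions $\fW_0|_{W_j}$ and $\fW_1|_{W_j}$ are flexible; in the $2n=4$ case the hypothesis on $\p_-W$ or on the indices of critical points of $\phi$ carries over to each $W_j$.

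We build the Weinstein homotopy $(\om_t,X_t,\phi)$ by induction on $j=1,\dots,N$, extending it from $W_1\cup\cdots\cup W_{j-1}$ to $W_1\cup\cdots\cup W_j$. The base case $j=1$ is an immediate application of Lemma~\ref{lem:Weinstein-homotopy-elem} to the elementary Smale cobordism $(W_1,Y_t|_{W_1},\phi|_{W_1})$ with the given boundary data on $\Op\p_-W_1=\Op\p_-W$. For the inductive step, the homotopy already constructed on $W_{j-1}$ provides, near the separating hypersurface $\p_+W_{j-1}=\p_-W_j$, a smooth family of Weinstein germs that we use as the new ``boundary data'' on $\Op\p_-W_j$. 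To apply Lemma~\ref{lem:Weinstein-homotopy-elem} on $W_j$ we also need a path $\hat\eta_t$ of nondegenerate $2$-forms on $W_j$ which agrees near $\p_-W_j$ with the constructed $\om_t$, satisfies $\hat\eta_i=\om_i|_{W_j}$ for $i=0,1$, and is homotopic rel $\Op\p_-W_j$ and endpoints to $\eta_t|_{W_j}$. We obtain $\hat\eta_t$ from $\eta_t|_{W_j}$ by modifying it on a small collar of $\p_-W_j$: at $t=0,1$ the already-constructed $\om_t$ and $\eta_t|_{W_j}$ both restrict to $\om_0|_{W_j}$, $\om_1|_{W_j}$ respectively, so on a sufficiently small collar a convex combination of $2$-forms remains nondegenerate and produces a two-parameter family of nondegenerate $2$-forms with fixed endpoints. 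The output of Lemma~\ref{lem:Weinstein-homotopy-elem} then glues smoothly to the previously constructed Weinstein homotopy.

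The main obstacle we anticipate is the homotopy-class compatibility of the paths of $2$-forms across the partition: at each intermediate hypersurface the modification of $\eta_t$ must remain within a single relative homotopy class of paths. This is arranged by performing each modification on a collar so narrow that convex combinations of nondegenerate $2$-forms stay nondegenerate, which, together with the automatic agreement at $t=0,1$, realizes the modification as a fixed-endpoint homotopy of paths. Concatenating these homotopies of paths across the $W_j$ yields a single fixed-endpoint homotopy from $\om_t$ to $\eta_t$ rel $\Op\p_-W$, completing the verification of all the required properties.
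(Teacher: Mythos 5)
Your overall strategy matches the paper's: both partition $W$ by single critical levels into elementary pieces and then apply Lemma~\ref{lem:Weinstein-homotopy-elem} inductively, threading the output of each step into the boundary data for the next. The base case and the book-keeping about elementarity, flexibility of restrictions, and the $2n=4$ hypothesis are all fine.

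The gap is in how you patch the background path of $2$-forms across each separating hypersurface. You propose to take the Weinstein structure $\om_t$ constructed on $W_{j-1}$ and interpolate with $\eta_t|_{W_j}$ by a convex combination on a narrow collar of $\p_-W_j$, arguing that smallness of the collar and agreement at $t=0,1$ keep the combination nondegenerate. That does not work: nondegeneracy of a convex combination is a pointwise condition, and for intermediate $t$ the two families $\om_t$ and $\eta_t$ need not be pointwise close along the hypersurface at all. Lemma~\ref{lem:Weinstein-homotopy-elem} gives you only a fixed-endpoint homotopy class statement, not $C^0$-closeness, so no amount of shrinking the collar rescues a convex combination (e.g.\ if at some interior $t$ and some point the two forms lie in different path components of the space of compatible linear symplectic forms restricted to a subspace, $(1-s)\om_t + s\,\eta_t$ can degenerate for every collar width).

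The fix is exactly the extra datum that Lemma~\ref{lem:Weinstein-homotopy-elem} already hands you but which your proposal discards: together with the Weinstein homotopy $\om_t^j$ on $W_j$ you get a homotopy $\eta_t^s$, rel $\Op\p_-W_j$ and with fixed endpoints, through nondegenerate $2$-forms from $\om_t^j$ to $\eta_t|_{W_j}$. Use $\eta_t^s$ (reparametrized in the collar coordinate) to interpolate on a neighborhood of $\p_+W_j$; this produces a new global path $\eta_t^j$ of nondegenerate $2$-forms equal to the Weinstein structure on $W_1\cup\cdots\cup W_j$, equal to $\eta_t$ beyond a collar, with $\eta_0^j=\om_0$, $\eta_1^j=\om_1$, and homotopic rel $\Op\p_-W$ with fixed endpoints to $\eta_t$. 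One must also extend the constructed gradient-like vector fields across $W$ (Lemma~\ref{lem:cone} handles this) to have a global background triple $(\eta_t^j,Y_t^j,\phi)$ feeding the next application of the elementary lemma; your write-up leaves this implicit. With these two corrections the induction closes as in the paper.
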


\begin{proof}
Let us pick regular values 
$$ 
   \phi|_{\p_-W} = c_0 < c_1 <\dots<c_N = \phi|_{\p_+W}
$$
such that each $(c_{k-1},c_k)$ contains at most one critical value. Then the
restriction of the homotopy $(Y_t,\phi)$, $t\in[0,1]$, to each cobordism   
$W^k :=  \{c_{k-1} \leq \phi\leq c_k \}$ is elementary.  

We apply Lemma~\ref{lem:Weinstein-homotopy-elem} to the restriction of
the homotopy $(\eta_t,Y_t,\phi)$ to $W^1$. Hence $\fW_0|_{W^1}$ and
$\fW_1|_{W^1}$ are connected by a homotopy $\fW_t^1=(\om_t^1,X_t^1,\phi)$,
$t\in[0,1]$, of flexible Weinstein structures on $W^1$, agreeing with
$(\eta_t,Y_t,\phi_t)$ on $\Op\p_-W$, such that the paths
$t\mapsto\om_t^1$ and $t\mapsto\eta_t$, $t\in[0,1]$, of nondegenerate
2-forms on $W^1$ are connected by a homotopy $\eta_t^s$, $s,t\in[0,1]$
rel $\Op\p_-W$ with fixed endpoints. We use the homotopy $\om_t^s$ to
extend $\om_t^1$ to nondegenerate 2-forms $\eta_t^1$ on $W$ such that
$\eta_0^1=\om_0$, $\eta_1^1=\om_1$, $\eta_t^1=\eta_t$ outside a
neighborhood of $W^1$, and the paths $t\mapsto\eta_t^1$ and
$t\mapsto\eta_t$, $t\in[0,1]$, of nondegenerate 2-forms on $W$ are
homotopic rel $\Op\p_-W$ with fixed endpoints. 
By Lemma~\ref{lem:cone}, we can extend $X_t^1$ to gradient-like vector
fields $Y_t^1$ for $\phi$ on $W$ such that $Y_0^1=X_0$ and
$Y_1^1=X_1$. 
Now we can apply Lemma~\ref{lem:Weinstein-homotopy-elem} to the
restriction of the homotopy $(\eta_t^1,Y_t^1,\phi)$ to the elementary
cobordism $W^2$ and continue inductively to construct homotopies
$(\eta_t^k,Y_t^k,\phi)$ on $W$ which are Weinstein on $W^k$, so
$(\eta_t^N,Y_t^N,\phi)$ is the desired Weinstein homotopy. Note that
$(\eta_t^N,Y_t^N,\phi)$ is flexible because its restriction to each
$W^k$ is flexible.  
\end{proof} 

\begin{proof}[Proof of Theorem~\ref{thm:W-hom2}]
Let us reparametrize the given homotopy 
$(\eta_t,Y_t,\phi_t)$,
$t\in[0,1]$, to make it constant for $t\in[\frac12,1]$.    
After pulling back $(\eta_t,Y_t,\phi_t)$ by a diffeotopy and target
reparametrizing $\phi_t$, we may further assume that $\phi_t$ is
independent of $t$ on $\Op\p W$. 

By Theorem~\ref{thm:W-hom1}, $\fW_0$ can be
extended to a homotopy $\fW_t=(\om_t,X_t,\phi_t)$,
$t\in[0,\frac12]$, of flexible Weinstein structures on $W$, fixed on
$\Op\p_-W$. We can modify $\fW_t$ 
to make it agree with $(\eta_t,Y_t,\phi_t)$ on $\Op\p_-W$. Note that
$\fW_{\frac12}$ and $\fW_1$ share the same function $\phi_{\frac12}=\phi_1$.   
We connect $\om_{\frac12}$ and $\om_1$ by a path $\eta_t'$,
$t\in[\frac12,1]$ of nondegenerate 2-forms by following the path $\om_t$
backward and then $\eta_t$ forward. Since $\om_t=\eta_t$ on
$\Op\p_-W$ for $t\in[0,\frac12]$, we can modify the path $\eta_t'$ to
make it constant equal to $\om_{\frac12}=\om_1$ on $\Op\p_-W$.  
By Lemma~\ref{lem:cone}, we can connect $X_{\frac12}$ and $X_1$ by a
homotopy $Y_t'$, $t\in[\frac12,1]$, of gradient-like vector fields for
$\phi_1$ which agree with $X_{\frac12}=X_1$ on $\Op\p_-W$.  

So we can apply Lemma~\ref{lem:W-hom-coarse} to the homotopy
$(\eta_t',Y_t',\phi_1)$, $t\in[\frac12,1]$. Hence $\fW_{\frac12}$ and
$\fW_1$ are connected by a homotopy $\fW_t=(\om_t,X_t,\phi_1)$,
$t\in[\frac12,1]$, of flexible Weinstein structures, agreeing with
$(\om_1,X_1,\phi_1)$ on $\Op\p_-W$, such that the paths of
nondegenerate 2-forms $t\mapsto\om_t$ and $t\mapsto\eta_t'$,
$t\in[\frac12,1]$, are homotopic rel $\Op\p_-W$ with fixed
endpoints. It follows from the definition of $\eta_t'$ that the
concatenated path $\om_t$, $t\in[0,1]$, is homotopic to $\eta_t$,
$t\in[0,1]$. Thus the concatenated Weinstein homotopy $\fW_t$,
$t\in[0,1]$, has the desired properties.  
\end{proof}

\section{Applications}\label{sec:app}

\subsection{The Weinstein $h$-cobordism theorem}

Most of your applications are based on the following result, which is
a direct consequence of the ``two-index 
theorem'' of Hatcher and Wagoner; see~\cite{CieEli12} for its formal
derivation from the results in~\cite{HatWag73,Igu88}. 

\begin{thm}\label{thm:pseudo}
Any two Morse functions without critical points of index $>n$ on a
cobordism or a manifold of dimension $2n>4$ can be connected by a
Morse homotopy without critical points of index $>n$ (where, as usual,
functions on a cobordism $W$ are required to have $\p_\pm W$ as
regular level sets and functions on a manifold are required to be 
exhausting).  
\end{thm}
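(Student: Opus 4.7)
The plan is to derive this as a formal consequence of Cerf's connectedness theorem for generalized Morse functions together with the Hatcher--Wagoner two-index theorem, both available in ambient dimension $\geq 6$; the hypothesis $2n > 4$ is precisely what makes the latter applicable.

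In the cobordism case, I would first apply Cerf's theorem to produce some generic Morse homotopy $\phi_t$ connecting $\phi_0$ to $\phi_1$, with Cerf diagram consisting of finitely many smooth arcs of critical values meeting in standard birth--death cusps, but possibly involving critical points of every index up to $2n$. I would then argue by downward induction on the maximum index $k$ appearing along $\phi_t$, aiming to reduce to $k \leq n$. Since $\phi_0, \phi_1 \in \Morse_n$ have no critical points of index $> n$, every arc of the Cerf diagram carrying index-$k$ critical points for $k > n$ is bounded between a birth cusp and a death cusp; with $k$ maximal, both cusps pair index $k$ with index $k-1$. Using standard Cerf moves (raising and lowering critical values, sliding handles, adjusting the gradient-like vector field so that stable and unstable manifolds of the paired points meet in Smale cancellation position) I would bring each such birth--death pair close in the $t$-direction, producing a short ``eye'' in the Cerf diagram for each occurrence of index $k$.

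The main obstacle is then the parametric handle cancellation asserted by the two-index theorem of \cite{HatWag73, Igu88}: in dimension $m = 2n \geq 6$, such a $(k-1,k)$ eye with $k > n$ can be excised from the $1$-parameter family by a deformation rel endpoints. This is a delicate parametric version of Smale's handle cancellation theorem, and its proof requires the full machinery of \cite{HatWag73, Igu88}, with the algebraic $K$-theoretic obstructions controlled by the fact that the endpoints have no index-$k$ critical points to begin with (so that we are merely removing locally introduced pairs rather than redistributing them). Iterating this inductive step from $k = 2n$ down to $k = n+1$ produces a Morse homotopy lying in $\Morse_n$. For the manifold case, I would finally apply the cobordism statement inductively over an exhaustion of $V$ by compact sublevel sets that are simultaneously regular for $\phi_0$ and $\phi_1$, gluing the resulting homotopies across common level sets and using that an exhausting generalized Morse function has only finitely many critical points below any given regular value.
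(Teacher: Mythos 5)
The paper does not actually give a proof of this theorem; it states that the result ``is a direct consequence of the two-index theorem of Hatcher and Wagoner'' and refers to~\cite{CieEli12} for the formal derivation from~\cite{HatWag73,Igu88}. Your proposal invokes exactly this machinery: first use Cerf-type transversality to obtain a generic generalized Morse homotopy (with high-index critical points allowed), then use the Hatcher--Wagoner/Igusa two-index theorem---for which $2n\geq 6$ is precisely what is needed---to push the index range down to $\leq n$ rel endpoints, by downward induction on the maximal index. In spirit and in ingredients this is the intended route, so your blind reconstruction matches the paper's cited proof.

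Two cautions worth registering. First, where you write that the $K$-theoretic obstructions are ``controlled by the fact that the endpoints have no index-$k$ critical points to begin with,'' this is the technical heart of the matter and is not a routine remark: the two-index theorem is not a local excision of eyes one at a time, but a global statement about deforming the entire Cerf graphic into a prescribed index window, and the vanishing of obstructions is precisely what~\cite{HatWag73,Igu88} establish. Treating it as a black box is fine, but your induction scheme (bring each eye ``short in the $t$-direction'' and excise it) is not literally how the theorem is stated or proved; arcs of index $k$ and $k-1$ can interact in complicated ways with lower-index arcs, and the actual argument deforms the whole graphic rather than treating eyes independently. Second, in the manifold case one cannot in general choose a sublevel exhaustion that is simultaneously regular for both $\phi_0$ and $\phi_1$, and homotopies constructed on successive cobordism pieces do not glue naively rel both boundary components. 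The correct reduction matches the definition of a simple Morse homotopy used in Section~\ref{ss:W-homotopies}: one works with a family of regular values $c_k(t)$ depending on $t$, so the ``common level sets'' are allowed to move. Your exhaustion argument should be reformulated in those terms; note also that Remark~\ref{rem:Morse-homotopy} flags that even the existence of \emph{some} connecting Morse homotopy between two exhausting Morse functions on a manifold is already not entirely obvious.
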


\begin{cor}\label{cor:pseudo}
In the case $2n>4$, we can remove the hypothesis on
the existence of a Morse homotopy $\phi_t$ from Theorems~\ref{thm:W-hom1},
\ref{thm:W-hom2}, \ref{thm:W-hom1-man} and \ref{thm:W-hom1-man} and
still conclude the existence of the stated Weinstein homotopies. \qed
\end{cor}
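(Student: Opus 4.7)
The plan is to derive the corollary directly from Theorem \ref{thm:pseudo} by producing the ``missing'' Morse homotopy $\phi_t$ and then feeding it into the relevant deformation theorem, without introducing any new symplectic ingredient.

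I would begin with the two-endpoint theorems, \ref{thm:W-hom2} and \ref{thm:W-hom2-man}, where the Morse homotopy $\phi_t$ is needed only to connect the functions $\phi_0$ and $\phi_1$ underlying the two given flexible Weinstein structures $\fW_0$ and $\fW_1$. Since any Weinstein structure in dimension $2n$ has all its critical points of index at most $n$, both $\phi_0$ and $\phi_1$ lie in $\Morse_n$; in the cobordism case they additionally share $\p_\pm W$ as regular level sets with matching boundary values, and in the manifold case they are both exhausting. Under the hypothesis $2n > 4$, Theorem \ref{thm:pseudo} delivers a Morse homotopy $\phi_t$, $t\in[0,1]$, inside $\Morse_n$ connecting $\phi_0$ to $\phi_1$ with the required boundary behavior. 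Feeding this $\phi_t$ into Theorem \ref{thm:W-hom2} (respectively \ref{thm:W-hom2-man}), together with the given path $\eta_t$ of nondegenerate $2$-forms, produces the desired flexible Weinstein homotopy connecting $\fW_0$ and $\fW_1$.

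The one-endpoint theorems \ref{thm:W-hom1} and \ref{thm:W-hom1-man} are handled identically: given the flexible Weinstein structure $\fW=(\om,X,\phi)$ and a target Morse function $\psi\in\Morse_n$ (arising, for instance, as the endpoint of any smooth family of index-$\leq n$ Morse functions starting at $\phi$), Theorem \ref{thm:pseudo} produces a genuine Morse homotopy from $\phi$ to $\psi$ inside $\Morse_n$, which we feed into Theorem \ref{thm:W-hom1} or \ref{thm:W-hom1-man}.

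The argument contains no new geometric content; its entire substance is the translation of the two-index theorem of Hatcher--Wagoner into the language of Morse homotopies and the book-keeping required to match the boundary conditions of Theorem \ref{thm:pseudo} with those of the Weinstein deformation theorems. The only point to verify with a little care is that Theorem \ref{thm:pseudo} can be applied \emph{relatively}: the Morse homotopy it produces must be constant near $\p W$ in the cobordism case, and constant outside a prescribed compact set in the manifold case. Both are standard relative versions of the pseudoisotopy argument and present the only genuine obstacle to a one-line derivation.
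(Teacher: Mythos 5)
Your proposal is correct and matches the paper's (implicit) reasoning: the corollary is proved by letting Theorem~\ref{thm:pseudo} supply the Morse homotopy $\phi_t$ that the deformation theorems take as input, then citing those theorems; there is nothing else to it, which is why the paper's proof is just a \qed. Your one flagged worry — that $\phi_t$ must be fixed near $\p W$ (cobordism case) or outside a compact set (manifold case) — is real but mild: as the proof of Theorem~\ref{thm:W-hom2} itself illustrates, one can always achieve this after the fact by pulling back by a diffeotopy and target-reparametrizing the functions, so no genuinely relative version of Hatcher--Wagoner is required. The only blemish is your parenthetical description of the target function $\psi$ for Theorems~\ref{thm:W-hom1} and~\ref{thm:W-hom1-man} as ``the endpoint of any smooth family of index-$\leq n$ Morse functions starting at $\phi$'' — that would already hand you a Morse homotopy and make Theorem~\ref{thm:pseudo} superfluous; the intended reading is simply that $\psi$ is an arbitrary Morse function in $\Morse_n$ with the appropriate boundary behavior, and Theorem~\ref{thm:pseudo} is what connects it to $\phi$.
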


In particular, we have the following Weinstein version of the
$h$-cobordism theorem.  

\begin{cor}[Weinstein $h$-cobordism theorem]
\label{cor:h-cobW}
Any flexible Weinstein structure on a product cobordism
$W=Y\times[0,1]$ of dimension $2n>4$ is homotopic to a Weinstein
structure $(W,\om, X,\phi)$, where $\phi:W\to[0,1]$ is a function
without critical points. \hfill$\Box$
\end{cor}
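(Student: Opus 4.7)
The plan is to produce a Morse function on $W$ with no critical points at all, view it as the endpoint of a Morse homotopy starting from the function of the given flexible Weinstein structure, and then lift that homotopy using the First Weinstein Deformation Theorem.

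Concretely, let $\fW_0=(\om_0,X_0,\phi_0)$ denote the given flexible Weinstein structure on $W=Y\times[0,1]$, and set $a:=\min\phi_0$, $b:=\max\phi_0$. Using the product structure, define $\phi_1:W\to[a,b]$ to be the composition of the projection $Y\times[0,1]\to[0,1]$ with the affine identification $[0,1]\to[a,b]$. Then $\phi_1$ has no critical points and satisfies $\phi_1|_{\p_-W}=a$, $\phi_1|_{\p_+W}=b$, so in particular $\phi_1$ is a Morse cobordism structure without critical points of index $>n$ (vacuously). After a preliminary modification of $\phi_1$ near $\p W$ (using a diffeomorphism of the target), we may assume $\phi_0=\phi_1$ on $\Op\p W$.

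Both $\phi_0$ and $\phi_1$ are then generalized Morse functions on the cobordism $W$ without critical points of index $>n$. Since $2n>4$, the Hatcher--Wagoner two-index theorem (Theorem~\ref{thm:pseudo}) furnishes a Morse homotopy $\phi_t$, $t\in[0,1]$, connecting them, with no critical points of index $>n$ and fixed near $\p W$. Apply the First Weinstein Deformation Theorem~\ref{thm:W-hom1}: the flexibility of $\fW_0$ and the index bound on $\phi_t$ provide a homotopy of flexible Weinstein structures $\fW_t=(\om_t,X_t,\phi_t)$ starting at $\fW_0$, fixed near $\p_-W$ and up to scaling near $\p_+W$. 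The endpoint $\fW_1=(\om_1,X_1,\phi_1)$ is a Weinstein structure on $W$ whose function has no critical points, as required. (One can equivalently bundle the last two steps together by invoking Corollary~\ref{cor:pseudo}.)

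The only genuinely deep ingredient is Theorem~\ref{thm:pseudo}, whose proof is external to this paper; everything else is direct assembly of previously developed tools. The one bookkeeping point to check is the boundary matching --- namely, that $\phi_0$ and $\phi_1$ may be assumed to coincide near $\p W$ and that the Hatcher--Wagoner homotopy can be taken to be rel $\p W$ --- but this is handled by the preliminary target reparametrization noted above, together with the standard relative form of the two-index theorem on cobordisms.
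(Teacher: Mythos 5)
Your argument is correct and is precisely the paper's (implicit) derivation: pick the projection $Y\times[0,1]\to[0,1]$ as a critical-point-free Morse function, connect it to $\phi_0$ via the Hatcher--Wagoner Theorem~\ref{thm:pseudo} (after the routine boundary normalization you note), and lift via Theorem~\ref{thm:W-hom1}, which is exactly what Corollary~\ref{cor:pseudo} packages. The paper leaves this as a one-line $\Box$ because it is a direct specialization of Corollary~\ref{cor:pseudo}; your write-up simply unwinds that specialization.
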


\subsection{Symplectomorphisms of flexible Weinstein manifolds}
 
Theorem~\ref{thm:W-hom2-man} has the following consequence for
symplectomorphisms of flexible Weinstein manifolds. 

\begin{theorem}\label{thm:symplectomorphism}
Let $\fW=(V,\om,X,\phi)$ be a flexible Weinstein manifold of dimension
$2n>4$, and $f:V\to
V$ a diffeomorphism such that $f^*\om$ is homotopic to $\om$ through
nondegenerate $2$-forms. Then there exists a diffeotopy $f_t:V\to V$,
$t\in[0,1]$, such that $f_0=f$, and $f_1$ is an exact
symplectomorphism of $(V,\om)$.   
\end{theorem}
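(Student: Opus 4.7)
The plan is to regard $f$ as an isomorphism between two flexible Weinstein structures on $V$, namely $\fW$ and its pullback $f^*\fW$, apply the second Weinstein deformation theorem to connect them by a Weinstein homotopy, and then upgrade that homotopy to the required diffeotopy using the convex-stability proposition.

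First I would form the pullback
\[
   f^*\fW := (V,\, f^*\omega,\, f^*X,\, \phi\circ f),
\]
whose Liouville form is $f^*\lambda$, where $\lambda:=i_X\omega$. Since $f$ is itself an isomorphism $f^*\fW \to \fW$ of Weinstein structures, it restricts to a contactomorphism between the contact hypersurfaces arising in an elementary decomposition of $f^*\fW$ and the corresponding hypersurfaces of $\fW$, carrying the attaching spheres of the index $n$ handles of $f^*\fW$ onto those of $\fW$. Because looseness is invariant under contactomorphism, $f^*\fW$ is flexible.

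Next I would invoke Theorem~\ref{thm:W-hom2-man}, combined with Corollary~\ref{cor:pseudo} (which applies since $2n>4$) to waive the hypothesis on the existence of a Morse homotopy between $\phi\circ f$ and $\phi$. The required homotopy of nondegenerate $2$-forms connecting $f^*\omega$ and $\omega$ is furnished by the hypothesis on $f$. This yields a flexible Weinstein homotopy $\fW_t=(\omega_t, X_t, \phi_t)$ on $V$ with $\fW_0 = f^*\fW$ and $\fW_1 = \fW$.

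Finally, I would apply Proposition~\ref{prop:convex-stability} to $\fW_t$ to obtain a diffeotopy $h_t:V\to V$ with $h_0=\id$ such that $h_1^*\lambda - f^*\lambda$ is exact. Setting
\[
   f_t := f \circ h_t^{-1}, \qquad t\in[0,1],
\]
gives $f_0 = f$ and $f_1\circ h_1 = f$, whence $f_1^*\lambda = (h_1^{-1})^*(f^*\lambda) = \lambda + d\gamma$ for some $1$-form $\gamma$, so $f_1$ is an exact symplectomorphism of $(V,\omega)$. The only point in this outline that requires genuine care is the verification that $f^*\fW$ is flexible; everything else is a formal consequence of the main deformation theorems of the paper, so I do not expect serious obstacles.
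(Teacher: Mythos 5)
Your proposal is correct and follows essentially the same route as the paper: apply Theorem~\ref{thm:W-hom2-man} together with Corollary~\ref{cor:pseudo} to connect $\fW$ and $f^*\fW$ by a flexible Weinstein homotopy, then use Proposition~\ref{prop:convex-stability} to produce the diffeotopy. The paper runs the homotopy from $\fW$ to $f^*\fW$ and sets $f_t = f\circ h_t$ rather than your $f_t = f\circ h_t^{-1}$, which is an immaterial reversal, and it treats the flexibility of $f^*\fW$ as obvious rather than spelling out the invariance-under-contactomorphism argument as you do.
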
 

\begin{proof}
By Theorem~\ref{thm:W-hom2-man} and Corollary~\ref{cor:pseudo}, there
exists a Weinstein homotopy 
$\fW_t$ connecting $\fW_0=\fW$ and $\fW_1=f^*\fW$. Thus
Proposition~\ref{prop:convex-stability} provides a diffeotopy $h_t:V\to
V$ such that $h_0=\id$ and $h_1^*f^*\lambda-\lambda$ is exact, where
$\lambda$ is the Liouville form of $\fW$. Now $f_t=f\circ h_t$ is the
desired diffeotopy.  
\end{proof}

\begin{remark}
Even if $\fW$ is of finite type and $f=\id$ outside a compact set, the
diffeotopy $f_t$ provided by Theorem~\ref{thm:symplectomorphism}
will in general {\em not} equal the identity outside a compact set.  
\end{remark}

\subsection{Symplectic pseudo-isotopies}
\label{sec:symplectic-pseudo-isotopies}
 

Let us recall the basic notions of pseudo-isotopy theory
from~\cite{Cer70,HatWag73}. For a manifold $W$ (possibly 
with boundary) and a closed subset $A\subset
W$, we denote by $\Diff(W,A)$ the space of diffeomorphisms of $W$ fixed
on $\Op(A)$, equipped with the $C^\infty$-topology. For a cobordism
$W$, the restriction map to $\p_+W$ defines a fibration 
$$
   \Diff(W,\p W)\to \Diff(W,\p_-W)\to \Diff_\PP(\p_+W), 
$$
where $\Diff_\PP(\p_+W)$ denotes the image of the restriction map
$\Diff(W,\p_-W)\to \Diff(\p_+W)$. 
For the product cobordism $I\times M$, $I=[0,1]$, $\p M=\varnothing$,
$$
   \PP(M):=\Diff(I\times M,0\times M)
$$
is the group of 
{\em pseudo-isotopies} 
\index{pseudo-isotopy} 
of $M$. Denote by $\Diff_\PP(M)$
the group of diffeomorphisms of $M$ that are {\em pseudo-isotopic to
  the identity}, i.e., that appear as the restriction to $1\times M$ of
an element in $\PP(M)$. Restriction to $1\times
M$ defines the fibration
$$
   \Diff(I\times M,\p I\times M)\to \PP(M)\to \Diff_\PP(M),
$$
and thus a homotopy exact sequence
$$
   \cdots\to\pi_0\Diff(I\times M,\p I\times M)\to \pi_0\PP(M)\to
   \pi_0\Diff_\PP(M) \to 0.  
$$
We will use the following alternative description of $\PP(M)$;
see~\cite{Cer70}. Denote 
by $\EE(M)$ the space of all smooth functions $f:I\times M\to I$
without critical points and satisfying $f(r,x)=r$ on $\Op(\p I\times
M)$. We have a homotopy equivalence 
$$
   \PP(M)\to\EE(M),\qquad F\mapsto p\circ F,
$$
where $p:I\times M\to I$ is the projection. A homotopy inverse is
given by fixing a metric and sending $f\in\EE(M)$ to the unique
diffeomorphism $F$ mapping levels of $f$ to levels of $p$ and
gradient trajectories of $f$ to straight lines $I\times\{x\}$. Note that
the last map in the homotopy exact sequence
$$
   \cdots\to\pi_0\Diff(I\times M,\p I\times M)\to \pi_0\EE(M)\to
   \pi_0\Diff_\PP(M)
$$
associates to $f\in\EE(M)$ the flow from $0\times M$ to $1\times M$
along trajectories of a gradient-like vector field (whose isotopy
class does not depend on the gradient-like vector field). 

For the symplectic version of the pseudo-isotopy spaces, it will be
convenient to replace 
$I\times M$ by $\R\times M$ as follows: We replace $\EE(M)$ by the
space of functions $f:\R\times M\to \R$ without critical points and
satisfying $f(r,x)=r$ outside a compact set; $\Diff(I\times M,\p
I\times M)$ by the space $\Diff_c(\R\times M)$ of
diffeomorphisms that equal the identity outside a compact set; and
$\PP(M)$ by the space of diffeomorphisms of $\R\times M$ that equal
the identity near $\{-\infty\}\times M$ and have the form
$(r,x)\mapsto(r+f(x),g(x))$ near $\{+\infty\}\times M$. The
last map in the exact sequence
$$
   \cdots\to\pi_0\Diff_c(\R\times M)\to \pi_0\EE(M)\to
   \pi_0\Diff_\PP(M)
$$
then associates to $f\in\EE(M)$ the flow from $\{-\infty\}\times M$ to 
$\{+\infty\}\times M$ along trajectories of a gradient-like vector
field which equals $\p_r$ outside a compact set. 

We endow the spaces $\PP(M)$, $\EE(M)$ and $\Diff_c(\R\times M)$ 
with the topology of uniform $C^\infty$-convergence on $\R\times M$  
(and {\em not} the topology of uniform $C^\infty$-convergence on
compact sets), with respect to the product of the Euclidean metric on
$\R$ and any Riemannian metric on $M$. In other words, a sequence
$F_n\in\PP(M)$ converges to $F\in\PP(M)$ if and only if
$\|F_n-F\|_{C^k(\R\times M)}\to 0$ for every $k=0,1,\dots$.  
For example, consider any non-identity element $F\in \PP(M)$ and the
translations $\tau_c(r,x)=(r+c,x)$, $c\in\R$, on $\R\times M$. Then 
the sequence $F_n:=\tau_n\circ F\circ\tau_{-n}$ {\it does not
  converge} as $n\to\infty$ to the identity in $\PP(M)$, although it does
converge uniformly on compact sets. 
With this topology, the obvious inclusion maps from the spaces on
$I\times M$ to the corresponding spaces on $\R\times M$ are weak
homotopy equivalences. 
\medskip

\begin{remark} 
It was proven by Cerf in~\cite{Cer70} that $\pi_0\PP(M)$ is trivial
if $\dim M\geq 5$ and $M$ is simply connected. In the non-simply
connected case and for $\dim M\geq 6$, Hatcher and Wagoner
(\cite{HatWag73}, see also~\cite{Igu88}) have expressed $\pi_0\PP(M)$
in terms of algebraic K-theory of the group ring of $\pi_1(M)$. In
particular, there are many fundamental groups for which $\pi_1\PP(M)$
is not trivial. 
\end{remark}

\bigskip

Let us now fix a contact manifold $(M^{2n-1},\xi)$ and denote by
$(SM,\lambda_\st)$ its symplectization with its canonical Liouville
structure $(\om_\st=d\lambda_\st,X_\st)$.  
Any choice of a contact form $\alpha$ for $\xi$ yields an
identification of $SM$ 
with $\R\times M$ and the Liouville structure $\lambda_\st=e^r\alpha$,
$\om_\st=d\lambda_\st$, $X_\st=\p_r$. However, the following
constructions do not require the choice of a contact form. We will
refer to the two ends of $SM$ as $\{\pm\infty\}\times M$. 

We define the group of 
{\em symplectic pseudo-isotopies} 
\index{symplectic!pseudo-isotopy} 
of $(M,\xi)$ as
\begin{multline*}
   \PP(M,\xi) := \{F\in\Diff(SM)\mid 
   F^*\om_\st=\om_\st,\ 
   F=\id \text{ near }\{-\infty\}\times M,\\   
   F^*\lambda_\st=\lambda_\st \text{ near
   }\{+\infty\}\times M\}.  
\end{multline*}
Moreover, we introduce the space
\begin{multline*}
   \EE(M,\xi) := \{(\lambda,\phi) \text{ Weinstein structure on SM
     without critical points }\mid \\
   d\lambda=\om_\st,\ 
   (\lambda,\phi)=(\lambda_\st,\phi_\st) \text{ outside a compact
     set}\}
\end{multline*}
and its image $\bar\EE(M,\xi)$ under the projection
$(\lambda,\phi)\mapsto\lambda$. 
We endow the spaces $\PP(M,\xi)$, $\EE(M,\xi)$, and $\bar\EE(M,\xi)$
with the topology of uniform $C^\infty$-convergence on $SM=\R\times M$
as explained above.

\begin{lemma}
The map
$$
   \EE(M,\xi)\to\bar\EE(M,\xi),\qquad
   (\lambda,\phi)\mapsto\lambda,
$$
is a homotopy equivalence and the map 
$$
   \PP(M,\xi) \to \bar\EE(M,\xi),\qquad F\mapsto F^*\lambda_\st, 
$$
is a homeomorphism. 
\end{lemma}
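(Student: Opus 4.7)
The plan is to treat the two maps separately. For the projection $\pi:(\lambda,\phi)\mapsto\lambda$, the fiber $\pi^{-1}(\lambda)$ is the set of exhausting functions $\phi$ on $SM$ satisfying $\phi=\phi_\st$ outside a compact set and for which the zero-free Liouville field $X_\lambda$ is gradient-like. By Lemma~\ref{lem:cone}(b), positive linear combinations of Lyapunov functions for a fixed vector field remain Lyapunov, and the end-condition $\phi=\phi_\st$ outside a compact set is preserved under convex combinations, so each fiber is convex and nonempty, hence contractible. The gradient-like condition is $C^0$-open in $X_\lambda$, so a fixed $\phi_0\in\pi^{-1}(\lambda_0)$ serves as a constant continuous local section in a neighborhood of $\lambda_0\in\bar\EE(M,\xi)$; patching such local sections by a partition of unity and using fiberwise convexity yields a global continuous section $s:\bar\EE(M,\xi)\to\EE(M,\xi)$. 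Fiberwise linear interpolation between $\phi$ and $s(\pi(\lambda,\phi))$ then deformation-retracts $\EE(M,\xi)$ onto the image of $s$, proving that $\pi$ is a homotopy equivalence.

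For the second map I would construct an explicit two-sided inverse. Given $\lambda\in\bar\EE(M,\xi)$, pick any lift $(\lambda,\phi)\in\EE(M,\xi)$; since $X_\lambda$ has no zeros and coincides with $\partial_r$ outside a compact set, its flow $\psi^\lambda_t$ is complete, and for each $x\in SM$ there exists an arbitrarily large $T=T(x)$ such that $\psi^\lambda_{-T}(x)$ lies in the region near $-\infty$ where $\lambda=\lambda_\st$. Define
$$
   F_\lambda(x):=\psi^\st_T\bigl(\psi^\lambda_{-T}(x)\bigr),
$$
where $\psi^\st_t$ is the flow of $X_\st=\partial_r$. Because the two flows coincide in the overlap region, $F_\lambda$ is independent of $T$ and so defines a smooth map. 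Using the Liouville scaling identities $(\psi^\st_t)^*\omega_\st=e^t\omega_\st$ and $(\psi^\lambda_t)^*\lambda=e^t\lambda$, together with the pointwise equality of $\lambda_\st$ and $\lambda$ near $\psi^\lambda_{-T}(x)$, one computes
$$
   F_\lambda^*\omega_\st=\omega_\st,\qquad
   F_\lambda^*\lambda_\st=e^T(\psi^\lambda_{-T})^*\lambda_\st
   =e^T(\psi^\lambda_{-T})^*\lambda=\lambda.
$$
Near $+\infty$, where $X_\lambda=\partial_r$ decouples the $r$- and $y$-coordinates, the formula shows that $F_\lambda$ has the form $(r,y)\mapsto(r+f(y),g(y))$, so $F_\lambda\in\PP(M,\xi)$. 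This gives surjectivity; for injectivity, if $F_1^*\lambda_\st=F_2^*\lambda_\st$ then $H:=F_2\circ F_1^{-1}$ preserves $\lambda_\st$ (hence $X_\st=\partial_r$) and equals the identity near $-\infty$, so commutativity with the $X_\st$-flow, which sweeps out all of $SM$ from any neighborhood of $-\infty$, forces $H=\id$.

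The main obstacle is that all three spaces carry the uniform $C^\infty$-topology on $SM=\R\times M$ rather than the weaker topology of compact convergence, so every continuity statement must be verified in this stronger sense. For the first map this requires the open sets in the partition-of-unity argument to be chosen so that on each one the compact support of $\lambda-\lambda_\st$ stays uniformly bounded; this is possible precisely because uniform $C^\infty$-convergence already enforces such local uniformity at infinity. For the second map, continuity of $F\mapsto F^*\lambda_\st$ follows because pullback by a diffeomorphism is continuous in the uniform $C^\infty$-topology, while continuity of $\lambda\mapsto F_\lambda$ reduces to continuous dependence of $\psi^\lambda_t$ on $\lambda$; this holds uniformly since $X_\lambda-\partial_r$ has compact support bounded uniformly on small neighborhoods in $\bar\EE(M,\xi)$, so $\psi^\lambda_t$ differs from $\psi^\st_t$ only by a compactly encoded perturbation.
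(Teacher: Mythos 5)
Your proof is correct and follows essentially the same approach as the paper: the fibers of the first map are the convex (hence contractible) sets of Lyapunov functions for a fixed Liouville field that are standard at infinity, and the inverse of the second map is the diffeomorphism conjugating $X_\lambda$ to $X_{\rm st}$ that is the identity near $\{-\infty\}\times M$. Where the paper simply invokes that a fibration with contractible fibers is a homotopy equivalence and characterizes the inverse of the second map abstractly by the properties $F_*X=X_{\rm st}$ and $F=\id$ near $-\infty$, you instead construct an explicit global section plus fiberwise deformation retraction, and give the explicit flow formula $F_\lambda=\psi^{\rm st}_T\circ\psi^\lambda_{-T}$; both choices are correct and, after unwinding, produce the same maps.
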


\begin{proof}
The first map defines a fibration whose fiber over $\lambda$ is
the contractible space of Lyapunov functions for $X$ which are
standard at infinity. The inverse of the second map associates to
$\lambda$ the unique $F\in\Diff(SM)$ satisfying $F_*X=X_\st$ on $SM$
and $F=\id$ near $\{-\infty\}\times M$ (which implies
$F^*\lambda_\st=\lambda$ on $SM$). 
\end{proof}

Since $F\in\PP(M,\xi)$ satisfies $F^*\lambda_\st=\lambda_\st$ near
$\{+\infty\}\times M$, it descends there to a contactomorphism
$F_+:M\to M$. 
By construction, $F_+$ belongs to
the group $\Diff_\PP(M)$ of diffeomorphisms that are pseudo-isotopic
to the identity, so it defines an element in 
$$
   \Diff_\PP(M,\xi) := \{F_+\in\Diff_\PP(M)\mid F_+^*\xi=\xi\}. 
$$
Moreover, $F_+=\id$ if and only if $F$ belongs to the space
$$
   \Diff_c(SM,\om_\st) := \{F\in\Diff_c(SM)\mid
   F^*\om_\st=\om_\st\}
$$
of compactly supported symplectomorphisms of $(SM,\om_\st)$. Thus we
have a fibration
$$
   \Diff_c(SM,\om_\st)\to \PP(M,\xi)\to \Diff_\PP(M,\xi).
$$
The corresponding homotopy exact sequence fits into a commuting
diagram 
\begin{equation}\label{eq:pseudo}
\begin{CD}
   \pi_0\Diff_c(SM,\om_\st) @>>> \pi_0\PP(M,\xi) @>>>
   \pi_0\Diff_\PP(M,\xi) @>>> 0\\
   @VVV @VVV @VVV \\
   \pi_0\Diff_c(\R\times M) @>>> \pi_0\PP(M) @>>>
   \pi_0\Diff_\PP(M) @>>> 0,
\end{CD}
\end{equation}
where the vertical maps are induced by the obvious inclusions. 

The following is the main result of this section. 

\begin{thm}\label{thm:pseudo-symp}
For any closed contact manifold $(M,\xi)$ of dimension $2n-1\geq 5$,
the map $\pi_0\PP(M,\xi)\to\pi_0\PP(M)$ is surjective. 
\end{thm}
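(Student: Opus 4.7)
The plan is to construct, for each class $[G] \in \pi_0\PP(M)$, an element of $\PP(M,\xi)$ mapping to $[G]$ under the inclusion. Let $f := p\circ G \in \EE(M)$, so that $f$ has no critical points and equals $r$ outside a compact set. By the homotopy equivalence $\EE(M,\xi)\simeq\bar\EE(M,\xi)\cong\PP(M,\xi)$, it suffices to construct $(\lambda,\phi)\in\EE(M,\xi)$ whose Lyapunov function $\phi$ lies in the same path component of $\EE(M)$ as $f$. Indeed, for the corresponding $F\in\PP(M,\xi)$ with $F^*\lambda_\st = \lambda$ (so $F_*X_\lambda = \p_r$), the function $p\circ F$ satisfies $X_\lambda\cdot(p\circ F) = dp(F_*X_\lambda) = dp(\p_r) = 1 > 0$ and hence is itself a Lyapunov function for $X_\lambda$. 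By the convexity of Lyapunov functions (Lemma~\ref{lem:cone}), $p\circ F$ and $\phi$ are connected through Lyapunov functions of $X_\lambda$, hence through paths in $\EE(M)$, so that $[F]\mapsto[p\circ F]=[\phi]=[f]$ in $\pi_0\EE(M)\cong\pi_0\PP(M)$.

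Choose $N$ large enough that $f = r$ outside $W := [-N,N]\times M$. The trivially flexible Weinstein cobordism $\fW_\st := (\om_\st,\p_r,r)|_W$ has no critical points. Since $2n \geq 6 > 4$, Theorem~\ref{thm:pseudo} connects $r|_W$ and $f|_W$ by a Morse homotopy $\phi_t$ without critical points of index $>n$; a standard modification enforces $\phi_t = r$ near $\p W$. Theorem~\ref{thm:W-hom1} then yields a flexible Weinstein homotopy $\fW_t=(\om_t,X_t,\phi_t)$ starting at $\fW_\st$, fixed near $\p_-W$ and fixed up to a positive scaling factor $C$ near $\p_+W$. The endpoint $\fW_1=(\om_1,X_1,f|_W)$ has no critical points and satisfies $\om_1 = \om_\st$ near $\p_-W$, $\om_1 = C\om_\st$ near $\p_+W$. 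Extend $\fW_1$ to $\R\times M$ by $(\om_\st,\p_r,r)$ on $(-\infty,-N]\times M$ and by $(C\om_\st,\p_r,r)$ on $[N,\infty)\times M$; the matching at $r=\pm N$ produces a smooth Weinstein structure $\fW^{\mathrm{ext}}$ on $\R\times M$ with Lyapunov function equal to $f$ and symplectic form that is $\om_\st$ near $-\infty$ and $C\om_\st$ near $+\infty$.

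The main technical obstacle is to upgrade $\fW^{\mathrm{ext}}$ so that its symplectic form equals $\om_\st$ everywhere. Let $\Psi(r,x):=(\psi(r),x)$ be a diffeomorphism of $\R\times M$ with $\psi=\Id$ on $(-\infty,-N]$ and $\psi(r)=r+\log C$ on $[N+\delta,\infty)$; a direct calculation gives $\Psi^*\om_\st=\om_\st$ on the left end and $\Psi^*\om_\st=C\om_\st$ on the right end, so it matches the symplectic form of $\fW^{\mathrm{ext}}$ outside a compact interpolation region. On that region both forms are exact symplectic forms agreeing on the boundary, so a relative Moser argument provides a compactly supported diffeomorphism $\Theta$ with $(\Psi\Theta)^*\om_\st = \om^{\mathrm{ext}}$ globally. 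Pushing forward $\fW^{\mathrm{ext}}$ by $\Psi\Theta$ yields $(\om_\st,\tilde X,\tilde f)$ on $\R\times M$, where $\tilde f = f\circ(\Psi\Theta)^{-1}$ equals $r-\log C$ on the extreme right; a compactly supported adjustment of $\tilde f$ in a far right region of $r$ (adding a non-decreasing function of $r$ interpolating from $0$ to $\log C$, with small derivative so no critical points are introduced) restores $\tilde f = r$ outside a compact set and delivers $(\lambda := i_{\tilde X}\om_\st, \tilde f)\in\EE(M,\xi)$. Finally, combining the isotopy of $\Psi\Theta$ to $\Id$ with the reintroduction of the $\log C$ shift yields a path in $\EE(M)$ from $\tilde f$ to $f$, completing the proof via the reduction of paragraph one.
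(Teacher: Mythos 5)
Your overall strategy matches the paper's: reduce to surjectivity of $\pi_0\EE(M,\xi)\to\pi_0\EE(M)$, invoke Theorem~\ref{thm:pseudo} to get a Morse homotopy of functions without high-index critical points, lift it through Theorem~\ref{thm:W-hom1} to a flexible Weinstein homotopy on the compact piece $W$, and then normalize the symplectic form back to $\om_\st$ by a Moser-type argument so that the result lands in $\EE(M,\xi)$. The first three steps are essentially identical to what the paper does.

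The divergence --- and the gap --- is in the normalization step. You extend only the \emph{endpoint} $\fW_1$ to $\R\times M$, producing $\fW^{\mathrm{ext}}$ with $\om^{\mathrm{ext}}=\om_\st$ near $-\infty$ and $C\om_\st$ near $+\infty$, and then try to transform $\om^{\mathrm{ext}}$ into $\om_\st$ by a shift $\Psi$ followed by a ``relative Moser argument.'' The problem is that Moser's trick requires a \emph{path} of symplectic forms connecting $\Psi^*\om_\st$ and $\om^{\mathrm{ext}}$; ``two exact symplectic forms agreeing outside a compact set'' does not by itself supply one, and the linear interpolation $(1-s)\om^{\mathrm{ext}}+s\,\Psi^*\om_\st$ is not obviously nondegenerate in the interpolation region (indeed $\om_1$ on $W$ comes from a Weinstein structure with a function $\phi_1$ far from $r$, so the $\{r\}\times M$ slices need not be of contact type for $\om_1$, and no positivity of the interpolation is available). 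So this Moser step, as stated, is unjustified. The paper avoids this by extending the \emph{entire family} $\fW_t$ over $\R\times M$, choosing Liouville forms $f_t(r)\lambda_\st$ on $\{r\geq b\}$ so that the extended $\om_t$ equals $\om_\st$ outside a fixed compact set $[a,c]\times M$ for all $t$; then the path $\om_t$ itself is a family of symplectic forms starting at $\om_\st$, and the parametric Moser theorem gives a compactly supported diffeotopy $h_t$ with $h_t^*\om_t=\om_\st$. Pulling back $\fW_1$ by $h_1$ lands in $\EE(M,\xi)$, and the path $\psi\circ h_t$ in $\EE(M)$ connects $\psi\circ h_1$ to $\psi$. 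You can repair your argument by adopting this family extension and family Moser rather than the endpoint-only shift-and-Moser you wrote.

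One further small remark: the preliminary reduction paragraph, where you argue that $p\circ F$ is a Lyapunov function for $X_\lambda$ and invoke Lemma~\ref{lem:cone} to connect it to $\phi$, needs a bit more care with what ``outside a compact set'' means for elements of $\PP(M)$ on $\R\times M$ (near $+\infty$, $F$ has the form $(r,x)\mapsto(r+c(x),g(x))$, so $p\circ F$ need not equal $r$ there); the paper brushes this under ``By the discussion above,'' and you should be similarly brief or else handle the boundary behavior honestly.
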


\begin{proof}
By the discussion above,
it suffices to show that the map $\pi_0\EE(M,\xi)\to\pi_0\EE(M)$
induced by the projection $(\lambda,\phi)\mapsto\phi$ is surjective.
So let $\psi\in\EE(M)$,
i.e., $\psi:\R\times M\to\R$ is a function without critical
points which agrees with $\phi_\st(r,x)=r$ outside a compact set
$W=[a,b]\times M$. 
By Theorem~\ref{thm:pseudo}, there exists a Morse homotopy
$\phi_t:\R\times M\to\R$ without critical points of index $>n$
connecting $\phi_0=\phi_\st$ with $\phi_1=\psi$ such that
$\phi_t=\phi_\st$ outside $W$ for all $t\in[0,1]$. 
We apply Theorem~\ref{thm:W-hom1} to the Weinstein cobordism
$\fW=(W,\om_\st,X_\st,\phi_\st)$ and the homotopy $\phi_t:W\to\R$. Hence
there exists a Weinstein homotopy $\fW_t=(W,\om_t,X_t,\phi_t)$, fixed
on $\Op\p_-W$ and fixed up to scaling on $\Op\p_+W$, such that
$\fW_0=\fW$.
Note that $\lambda_t=c_t\lambda_\st$ on
$\Op\p_+W$ for constants $c_t$ with $c_0=1$. So we can extend $\fW_t$
over the rest of $\R\times M$ by the function $\phi_\st$ and Liouville
forms $f_t(r)\lambda_\st$ such that $\fW_t=\fW$ on
$\{r\leq a\}$ and on $\{r\geq c\}$ for some sufficiently large $c>b$. 
By Moser's stability theorem,
we find a diffeotopy
$h_t:SM\to SM$ with $h_0=\id$, $h_t=\id$ outside $[a,c]\times M$, and
$h_t^*\fW_t=\fW$. Thus $h_1^*\fW_1=(\lambda,\phi)$ with the function
$\phi:=\psi\circ h_1$ and a Liouville form $\lambda$ which agrees with
$\lambda_\st$ outside $[a,c]\times M$ and satisfies
$d\lambda=\om_\st$. Hence 
$(\lambda,\phi)\in\EE(M,\xi)$ and $\phi$ is homotopic (via $\psi\circ
h_t$) to $\psi$ in $\EE(M)$, i.e., $[\phi]=[\psi]\in\pi_0\EE(M)$. 
\end{proof}

By Theorem~\ref{thm:pseudo-symp}, the second vertical map in the
diagram~\eqref{eq:pseudo} is surjective and we obtain

\begin{cor}
Let $(M,\xi)$ be a closed contact manifold of dimension $2n-1\geq
5$. Then every diffeomorphism of $M$ that is pseudo-isotopic to the
identity is smoothly isotopic to a contactomorphism of $(M,\xi)$. 
\end{cor}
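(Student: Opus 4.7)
The plan is to derive the corollary as an immediate formal consequence of Theorem~\ref{thm:pseudo-symp} by chasing the commutative diagram~\eqref{eq:pseudo}. The key observation is that $\pi_0\Diff_\PP(M)$ is taken in the $C^\infty$ topology, so two of its elements represent the same class precisely when they lie in the same path component of $\Diff_\PP(M)$, i.e.\ are connected by a smooth isotopy through diffeomorphisms pseudo-isotopic to the identity. Producing a contactomorphism lying in the same class as a given $g$ is therefore exactly what the corollary demands.

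Concretely, starting from a diffeomorphism $g\in\Diff_\PP(M)$, I would first invoke the very definition of $\Diff_\PP(M)$ to pick a pseudo-isotopy $G\in\PP(M)$ with $G|_{1\times M}=g$; this lifts $[g]\in\pi_0\Diff_\PP(M)$ to a class $[G]\in\pi_0\PP(M)$. Next, I would apply Theorem~\ref{thm:pseudo-symp} to lift $[G]$ further to a class $[F]\in\pi_0\PP(M,\xi)$ under the middle vertical map of~\eqref{eq:pseudo}. The restriction $\tilde g := F_+:M\to M$ is then by construction a contactomorphism of $(M,\xi)$ lying in $\Diff_\PP(M,\xi)$, and it realizes the image of $[F]$ under the upper horizontal arrow.

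Commutativity of the right-hand square of~\eqref{eq:pseudo} now forces $[\tilde g]=[g]$ in $\pi_0\Diff_\PP(M)$, so $g$ and the contactomorphism $\tilde g$ lie in the same path component of $\Diff_\PP(M)\subset\Diff(M)$; a choice of path in that component supplies the required smooth isotopy from $g$ to $\tilde g$. There is no genuine obstacle at this stage: all of the substantive content has already been absorbed into Theorem~\ref{thm:pseudo-symp} (whose proof in turn relies on Theorem~\ref{thm:pseudo} and the first Weinstein deformation theorem~\ref{thm:W-hom1}) together with the construction of the commutative diagram~\eqref{eq:pseudo}, and the corollary itself reduces to the one-line diagram chase foreshadowed in the remark immediately preceding its statement.
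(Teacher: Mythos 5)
Your proposal is correct and is exactly the paper's argument: the paper proves Theorem~\ref{thm:pseudo-symp} and then states the corollary by observing that the second vertical map in the diagram~\eqref{eq:pseudo} is surjective, which is precisely the diagram chase you carry out. You have just spelled out the one-line deduction the paper leaves implicit.
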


\begin{remark}
Considering in the diagram~\eqref{eq:pseudo} elements in $\pi_0\PP(M)$
that map to $\id\in\pi_0\Diff_\PP(M)$, we obtain the following (non-exclusive)
dichotomy for a contact manifold $(M,\xi)$ of dimension $\geq 5$ for
which the map $\pi_0\Diff_c(\R\times M)\to\pi_0\PP(M)$ is nontrivial: 
Either there exists a contactomorphism of $(M,\xi)$ that is smoothly
but not contactly isotopic to the identity; or there exists a
compactly supported symplectomorphism of $(SM,\om_\st)$ which
represents a nontrivial smooth pseudo-isotopy class in $\PP(M)$. 
Unfortunately, we cannot decide which of the two cases occurs. 
\end{remark}

\subsection{Equidimensional symplectic embeddings of flexible
  Weinstein manifolds}

Finally, let us mention a recent result concerning equidimensional
symplectic embeddings of flexible Weinstein manifolds. Its proof goes
beyond the methods discussed in this paper. 

\begin{thm}[\cite{EliMur13}]\label{thm:self-embed}
Let $(W,\om,X,\phi)$ be a flexible Weinstein domain with Liouville
form $\lambda$. Let $\Lambda$ be any other Liouville form on $W$ such
that the symplectic forms $\om$ and $\Omega:=d\Lambda$ are
homotopic as non-degenerate (not necessarily closed) $2$-forms. 
Then there exists an isotopy $h_t:W\into W$ such that $h_0=\Id$ and
$h_1^*\Lambda=\eps\lambda+dH$ for some small $\eps>0$ and some smooth
function $H:W\to\R$. In particular, $h_1$ defines a symplectic
embedding $(W,\eps\om)\into(W,\Omega)$. 
\end{thm}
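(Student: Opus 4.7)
The plan is to combine the Weinstein deformation theorem with the $h$-principle machinery from the preceding sections to realize the flexible Weinstein structure inside $(W,\Om)$ up to scaling.

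\textbf{Step 1.} By Theorem~\ref{thm:Weinstein-existence}, applied to the nondegenerate $2$-form $\Om$, I construct a flexible Weinstein structure $\wt\fW = (W,\wt\om,\wt X,\wt\phi)$ with $\wt\om$ homotopic to $\Om$ through nondegenerate $2$-forms. Using Theorem~\ref{thm:pseudo} in dimension $2n>4$, I may in addition arrange the function $\wt\phi$ to be Morse-homotopic to $\phi$ within the class of functions whose critical points have index $\leq n$. Since $\om$ and $\Om$ lie in the same nondegenerate $2$-form class by hypothesis, so does $\wt\om$.

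\textbf{Step 2.} By the Weinstein deformation theorem (Theorem~\ref{thm:W-hom2} combined with Corollary~\ref{cor:pseudo}), $\fW$ and $\wt\fW$ are Weinstein homotopic. Proposition~\ref{prop:convex-stability} then yields a diffeotopy $g_t: W \to W$ with $g_0 = \id$ and $g_1^*\wt\om = \om$, with $g_1^*\wt\lambda - \lambda$ exact, where $\wt\lambda = i_{\wt X}\wt\om$. It therefore suffices to find an isotopy $\wt h_t: W \into W$ with $\wt h_0 = \id$ and $\wt h_1^*\Lambda - \eps\wt\lambda$ exact; composing with $g_t^{-1}$ then produces the desired $h_t$.

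\textbf{Step 3.} The skeleton $\Skel(\wt\fW)\subset W$ is a CW complex of dimension $\leq n$ built inductively by attaching totally real discs along loose Legendrian spheres. Using Theorem~\ref{thm:real-handle} (and Theorem~\ref{thm:loose} at the top index) in the ambient almost complex manifold determined by $\Om$, and taking as formal input the homotopy between $\wt\om$ and $\Om$, I embed $\Skel(\wt\fW)$ as an isotropic CW complex in $(W,\Om)$ through a smooth isotopy starting at the inclusion. A parametric Weinstein neighborhood theorem for isotropic CW complexes then identifies a small symplectic neighborhood of the embedded skeleton in $(W,\Om)$ with a neighborhood of $\Skel(\wt\fW)$ in $(W,\wt\om)$. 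Since the backward Liouville flow $\wt X^{-T}$ contracts $\wt\om$ by $e^{-T}$ and sends $W$ into any prescribed neighborhood of $\Skel(\wt\fW)$ for $T$ large, composing with the skeletal embedding produces $\wt h_1$ with $\wt h_1^*\Om = \eps\wt\om$ for $\eps = e^{-T}$.

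\textbf{The main obstacle} will be the global isotropic embedding of the skeleton in Step 3, together with the associated parametric Weinstein neighborhood statement. While each individual handle can be attached isotropically using the $h$-principles of Sections~\ref{sec:isotropic}--\ref{sec:loose}, constructing a coherent isotropic embedding of the entire CW skeleton and proving a symplectic neighborhood theorem for such singular isotropic complexes requires additional machinery beyond what is developed in this article. Ensuring that $\wt h_1^*\Lambda - \eps\wt\lambda$ is genuinely exact (and not merely closed) is a further cohomological subtlety which must be addressed through the choice of isotropic embedding and a suitable application of the Liouville flow. These are precisely the technical contributions of~\cite{EliMur13}.
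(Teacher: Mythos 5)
This article does not actually prove Theorem~\ref{thm:self-embed}: the statement is quoted from~\cite{EliMur13}, and the paragraph immediately preceding it says explicitly that its proof ``goes beyond the methods discussed in this paper.'' So there is no proof here to compare your attempt against; I can only assess your proposal on its own merits.

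Steps~1 and~2 are reasonable reductions (though Step~1 is superfluous: by hypothesis $\om$ already lies in the formal class of $\Omega$, so you may simply take $\wt\fW=\fW$). The decisive difficulty, which you correctly locate in Step~3, is more serious than ``additional machinery beyond this article'': the plan of embedding the entire skeleton at once and invoking a Weinstein neighborhood theorem for it does not go through even in outline. The skeleton of a Weinstein domain is a singular stratified set -- stable discs of lower-index critical points sit inside the closures of higher-index ones, and the cell attachments need not be tame -- and there is no isotropic-embedding $h$-principle, nor any (let alone parametric) Weinstein neighborhood theorem, for such singular isotropic complexes. The $h$-principles of Sections~\ref{sec:isotropic}--\ref{sec:loose} concern smooth discs attached along smooth (loose) isotropic spheres, not CW maps of singular complexes; a neighborhood theorem for the global skeleton would essentially already contain the theorem to be proved. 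The argument in~\cite{EliMur13} is instead inductive over the Weinstein handle decomposition: one constructs a nested sequence of Liouville subdomains of $(W,\Omega)$ by attaching handles one at a time, where each stage is arranged to be Weinstein deformation equivalent to the corresponding scaled subdomain of $\fW$ -- this automatically produces the exactness $h_1^*\Lambda-\eps\lambda=dH$ at each step rather than leaving it as a cohomological afterthought. The critical new ingredient, and the main theorem of~\cite{EliMur13}, is an $h$-principle for embedded Lagrangian discs with loose Legendrian conical boundary (``Lagrangian caps''), which is what makes the top-index handles attachable inside the target. None of the $h$-principles available in the present paper supply that; without it, Step~3 cannot be carried out.
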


\begin{cor}[\cite{EliMur13}]\label{cor:flexible-embed}
Let $(W,\om,X,\phi)$ be a flexible Weinstein domain and $(X,\Omega)$
any symplectic manifold of the same dimension. Then any smooth
embedding $f_0:W\into X$ such that the form $f_0^*\Omega$ is exact and
the differential $df:TW\to TX$ is homotopic to a symplectic
homomorphism is isotopic to a symplectic embedding $f_1:(W,\eps\om)\into
(X,\Omega)$ for some small $\eps>0$. 
Moreover, if $\Omega=d\Lambda$, then the embedding $f_1$ can be chosen
in such a way that the 1-form $f_1^*\Lambda-i_X\om$ is exact. If, moreover,
the Liouville vector field dual to $\Lambda$ is complete, then
the embedding $f_1$ exists for arbitrarily large constant
$\eps$. \qed 
\end{cor}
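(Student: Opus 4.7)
The plan is to deduce this corollary formally from Theorem~\ref{thm:self-embed}. The key observation is that, since $\dim W=\dim X$, the embedding $f_0$ has codimension zero, so $f_0$ is a local diffeomorphism onto its image; in particular the pullback $f_0^*\Omega$ is automatically a \emph{symplectic} form on $W$, and by hypothesis $f_0^*\Omega=d\mu$ for some $1$-form $\mu$. To apply Theorem~\ref{thm:self-embed} with $\mu$ in the role of the auxiliary Liouville form, I need $\om$ and $f_0^*\Omega$ to be homotopic through nondegenerate $2$-forms on $W$. For this, let $F^s\colon TW\to TX$, $s\in[0,1]$, be a homotopy of fiberwise injective bundle homomorphisms over $f_0$ with $F^0=df_0$ and $(F^1)^*\Omega=\om$, as supplied by the formal hypothesis on $df_0$. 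Since each $(df_0)_x$ is an isomorphism, $\Phi^s:=(df_0)^{-1}\circ F^s$ is a path of bundle automorphisms of $TW$ over $\Id_W$ with $\Phi^0=\Id$ and $(\Phi^1)^*(f_0^*\Omega)=\om$ pointwise; pulling $f_0^*\Omega$ back by $\Phi^s$ yields the desired homotopy of nondegenerate $2$-forms.

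Theorem~\ref{thm:self-embed} then produces an isotopy $h_t\colon W\into W$ with $h_0=\Id$ and $h_1^*\mu=\eps\lambda+dH$ for some $\eps>0$ and $H\colon W\to\R$. Setting $f_t:=f_0\circ h_t$ gives an isotopy of embeddings satisfying
\[
   f_1^*\Omega \;=\; h_1^*(d\mu) \;=\; d(h_1^*\mu) \;=\; \eps\,d\lambda \;=\; \eps\,\om,
\]
so $f_1\colon(W,\eps\om)\into(X,\Omega)$ is the required symplectic embedding, isotopic to $f_0$ via $\{f_t\}$. For the first moreover clause, when $\Omega=d\Lambda$ I take $\mu:=f_0^*\Lambda$ from the outset, so that $f_1^*\Lambda=h_1^*f_0^*\Lambda=\eps\lambda+dH$; thus $f_1^*\Lambda$ differs from the Liouville form $\eps\lambda=i_X(\eps\om)$ of the rescaled Weinstein structure $(W,\eps\om,X,\phi)$ by the exact form $dH$, which is the asserted exactness. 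For the second moreover clause, let $\tilde X$ denote the Liouville vector field on $X$ dual to $\Lambda$; by completeness its flow $\tilde\psi_s$ exists for all $s\in\R$ and satisfies $\tilde\psi_s^*\Omega=e^s\Omega$. Hence $\tilde\psi_s\circ f_1\colon(W,e^s\eps\,\om)\into(X,\Omega)$ is symplectic, and for any prescribed $\eps'>0$ I choose $s=\log(\eps'/\eps)$; the isotopy to $f_0$ is maintained by concatenating $\{f_t\}_{t\in[0,1]}$ with $\{\tilde\psi_{sr}\circ f_1\}_{r\in[0,1]}$.

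The reduction itself is essentially formal; all substantive content is packaged in Theorem~\ref{thm:self-embed}, whose proof lies outside the scope of this excerpt. The one delicate point in the reduction is the translation of the formal hypothesis on $df_0$ into a homotopy of nondegenerate $2$-forms on $W$, which depends crucially on the equidimensional assumption to invert $df_0$ pointwise.
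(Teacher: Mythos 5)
Your reduction to Theorem~\ref{thm:self-embed} is correct and is evidently the intended argument; the paper records the corollary with a \qed precisely because this formal translation is all that is needed. The two points you flag — that the equidimensionality lets you invert $df_0$ to turn the formal hypothesis into a homotopy of nondegenerate $2$-forms $(\Phi^s)^*f_0^*\Omega$ from $f_0^*\Omega$ to $\om$, and that taking $\mu=f_0^*\Lambda$ when $\Omega=d\Lambda$ yields the exactness clause (reading the paper's ``$f_1^*\Lambda-i_X\om$'' as the Liouville form $\eps\,i_X\om=\eps\lambda$ of the rescaled structure $(W,\eps\om,X,\phi)$) — are exactly the right ones, and your use of the Liouville flow $\tilde\psi_s$ for the last clause, including the preservation of the exactness property since $\tilde\psi_s^*\Lambda-e^s\Lambda$ is exact, is correct.
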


\bigskip
{\bf Acknowledgement.} Part of this paper was written when the second author visited the Simons Center for Geometry and Physics at Stony Brook. He thanks the center for  the hospitality.
 

\end{document}